\newtheorem{theorem}{Theorem}[section]
\newtheorem{lemma}[theorem]{Lemma}
\newtheorem{proposition}[theorem]{Proposition}
\newtheorem{corollary}[theorem]{Corollary}
\theoremstyle{definition}
\newtheorem{definition}[theorem]{Definition}
\newtheorem{notations}[theorem]{Notations}
\newtheorem{notation}[theorem]{Notation}
\newtheorem{example}[theorem]{Example}
\theoremstyle{remark}
\newtheorem{remark}[theorem]{Remark}
\DeclareMathOperator{\Ab}{\mathcal{A}b}
\DeclareMathOperator{\Aut}{\mathsf{Aut}}
\DeclareMathOperator{\dg}{\mathsf{dg}}
\DeclareMathOperator{\End}{\mathsf{End}}
\DeclareMathOperator{\GL}{\mathsf{GL}}
\DeclareMathOperator{\Hom}{\mathsf{Hom}}
\DeclareMathOperator{\Ho}{\mathsf{H}}
\DeclareMathOperator{\ima}{\mathsf{Im}}
\DeclareMathOperator{\ide}{\mathsf{id}}
\DeclareMathOperator{\Mod}{\mathsf{Mod}}
\DeclareMathOperator{\Reg}{\mathsf{Reg}}
\DeclareMathOperator{\scc}{\mathsf{sc}}
\DeclareMathOperator{\sg}{\mathsf{sg}}
\DeclareMathOperator{\Vect}{\mathcal{V}ect}
\DeclareMathOperator{\Z}{\mathsf{Z}}
\DeclareMathOperator{\op}{\mathsf{op}}
\DeclareMathOperator{\bN}{\mathbb{N}}
\newcommand{\ot}{\otimes}
\newcommand{\sub}{\subseteq}
\newcommand{\noi}{\noindent}
\newcommand{\wh}{\widehat}
\newcommand{\ov}{\overline}
\newcommand{\al}{\alpha}
\newcommand{\be}{\beta}
\newcommand{\de}{\delta}
\newcommand{\ep}{\epsilon}
\newcommand{\si}{\sigma}
\newcommand{\De}{\Delta}
\def \xcirc{\objectmargin{0.1pc}\def\objectstyle{\sssize}\diagram \squarify<1pt>{}\circled\enddiagram}
\begin{document}

\title{Braided Sweedler cohomology}


\author{Sergio D. Corti}
\address{Departamento de Matem\'atica\\ Facultad de Ciencias Exactas, Campus Universitario, Paraje Arroyo Seco
(7000) Tandil, Provincia de Buenos Aires.} \curraddr{}

\author{Jorge A. Guccione}
\address{Departamento de Matem\'atica\\ Facultad de Ciencias Exactas y Naturales, Pabell\'on
1 - Ciudad Universitaria\\ (1428) Buenos Aires, Argentina.} \curraddr{}
\email{vander@dm.uba.ar}

\author{Juan J. Guccione}
\address{Departamento de Matem\'atica\\ Facultad de Ciencias Exactas y Naturales\\
Pabell\'on 1 - Ciudad Universitaria\\ (1428) Buenos Aires, Argentina.} \curraddr{}
\email{jjgucci@dm.uba.ar}

\thanks{Supported by PICT 12330, UBACYT X294 and CONICET}

\abstract We introduced a braided Sweedler cohomology, which is adequate to work with the $H$-braided cleft extensions studied in \cite{G-G1}.
\endabstract

\subjclass[2000]{Primary 18G60; Secondary 16W30}
\date{}

\dedicatory{}


\maketitle

\section*{Introduction}
In \cite{Sw} a cohomology theory $\Ho^*(H,A)$ for a commutative module algebra $A$ over a cocommutative Hopf algebra $H$ was introduced. This cohomology is related to those of groups an Lie algebras in the following sense. When $H$ is a group algebra $k[G]$, then $\Ho^*(H,A)$ is canonically isomorphic to the group cohomology of $G$ in the multiplicative group of invertible elements of $A$, and when $H$ is the enveloping algebra $U(L)$ of a Lie algebra $L$, then $\Ho^n(H,A)$ is canonically isomorphic to the cohomology of $L$ in the underlying vector space of $A$, for all $n\ge 2$.

One of the man properties of the Sweedler cohomology is that there is a bijective correspondence between $\Ho^2(H,A)$ and the equivalences classes of $H$-cleft extensions of $A$. This result it was extended in \cite{D1}, where it was shown that the hypothesis of commutativity of $A$ can be removed.

Let $H$ be a braided bialgebra. In \cite{G-G1} a notion of clef extension of an $H$-braided module algebra $(A,s)$ was presented (for the definitions see Section~1). This concept is more general than the one defined in \cite{B-C-M} still when $H$ is a standard Hopf algebra. Assume that $H$ is a braided cocommutative Hopf algebra. In this paper we present a braided version of the Sweedler cohomology in order to classify the cleft extensions introduced in \cite{G-G1}.

\smallskip

The paper is organized as follows: Section~1 is devoted to review some notions from \cite{G-G1} and to introduced some concepts that we will need later.
In Section~2, we define, by means of a explicit complex, the braided Sweedler cohomology of a braided cocommutative Hopf algebra $H$ with coefficients in an $H$-braided module algebra $A$. When $H$ is a cocommutative standard Hopf algebra and $H$ is an usual module algebra, our complex reduced to the classical one of Sweedler.
In Section~3 we show that the second cohomology group of our complex classify the cleft extensions of an $H$-braided module algebra $(A,s)$. In Section~4 we prove that when $H$ is a group algebra $k[G]$, the braided Sweedler cohomology of $H$ with coefficients in an $H$-braided module algebra $(A,s)$ coincides with a variant of the group homology of $G$ with coefficients in the multiplicative group of invertible elements of $A$ and in Section~5 we prove a similar result for the cohomology groups of degree greater than $1$, when $H$ is the enveloping algebra of a Lie algebra $L$. In Section~6 we show that in order to compute the cohomology mentioned in the previous section, a Chevalley-Eilenberg type complex can be used. Finally, in Section~7, we calculate all the cleft extensions in a particular case.

\section{Preliminaries}

In this article we work in the category of vector spaces over a field $k$. Then we assume
implicitly that all the maps are $k$-linear and all the algebras and coalgebras are over $k$.
The tensor product over $k$ is denoted by $\ot$, without any subscript, and the category of
$k$-vector spaces is denoted by $\Vect$. Given a vector space $V$ and $n\ge 1$, we let $V^n$
denote the $n$-fold tensor power $V\ot \cdots \ot V$. Given vector spaces $U,V,W$ and a map
$f\colon V\to W$ we write $U\ot f$ for $\ide_U\ot f$ and $f\ot U$ for $f\ot \ide_U$. We assume
that the algebras are associative unitary and the coalgebras are coassociative counitary.
Given an algebra $A$ and a coalgebra $C$, we let $\mu\colon A\ot A \to A$, $\eta\colon k \to
A$, $\De\colon C\to C\ot C$ and $\ep\colon C\to k$ denote the multiplication, the unit, the
comultiplication and the counit, respectively, specified with a subscript if necessary.

Some of the results of this paper are valid in the context of monoidal categories. In fact we
use the nowadays well known graphic calculus for monoidal and braided categories. As usual,
morphisms will be composed from up to down and tensor products will be represented by
horizontal concatenation in the corresponding order. The identity map of a vector space will
be represented by a vertical line. Given an algebra $A$, the diagrams
$$
\spreaddiagramcolumns{-1.6pc}\spreaddiagramrows{-1.6pc}
\objectmargin{0.0pc}\objectwidth{0.0pc}
\def\objectstyle{\sssize}
\def\labelstyle{\sssize}
\grow{\xymatrix{
               \ar@{-}`d/4pt [1,1] `[0,2] [0,2] &&\\
               &\ar@{-}[1,0]\\
               &
}}
\grow{\xymatrix{{}\save[]+<0pc,-0.4pc>*\txt{,} \restore}}
\quad
\grow{\xymatrix{
              \save\go+<0pt,-1.6pt>\Drop{\circ}\restore\\
              \ar@{-}[-1,0]+<0pt,-2.5pt> \ar@{-}[1,0] \\
              &
}}
\quad\grow{\xymatrix{{}\save[]+<0pc,-0.4pc>*\txt{and} \restore}}\quad
\quad
\grow{\xymatrix{
               \ar@{-}`d/4pt [1,2][1,2] && \ar@{-}[1,0]\\
               &&\ar@{-}[1,0]\\
               &&
}}
$$
stand for the multiplication map, the unit and the action of $A$ on a left $A$-module,
respectively, and for a coalgebra $C$, the comultiplication and the counit will be represented by the diagrams
$$
\spreaddiagramcolumns{-1.6pc}\spreaddiagramrows{-1.6pc}
\objectmargin{0.0pc}\objectwidth{0.0pc}
\def\objectstyle{\sssize}
\def\labelstyle{\sssize}
\grow{\xymatrix{
              &\ar@{-}[1,0]\\
              & \ar@{-}`l/4pt [1,-1] [1,-1] \ar@{-}`r [1,1] [1,1]\\
              &&
}}
\quad\grow{\xymatrix{{}\save[]+<0pc,-0.4pc>*\txt{and} \restore}}\quad
\quad
\grow{\xymatrix{
              \ar@{-}[1,0]\\
              \ar@{-}[1,0]+<0pt,2.5pt>\\
              \save\go+<0pt,1.6pt>\Drop{\circ}\restore\\
              &
}}
\grow{\xymatrix{\\
\txt{,} }}
$$
respectively. The maps $c$ and $s$, which appear at the
beginning of Subsection~1.1, will be represented by the diagrams
$$
\spreaddiagramcolumns{-1.6pc}\spreaddiagramrows{-1.6pc}
\objectmargin{0.0pc}\objectwidth{0.0pc}
\def\objectstyle{\sssize}
\def\labelstyle{\sssize}
\grow{\xymatrix{
\ar@{-}[1,1]+<-0.1pc,0.1pc> && \ar@{-}[2,-2]\\
&&\\
&&\ar@{-}[-1,-1]+<0.1pc,-0.1pc> }}
\quad\grow{\xymatrix{
\\
\txt{and} }}\quad
\grow{\xymatrix{
\ar@{-}[1,1]+<-0.125pc,0.0pc> \ar@{-}[1,1]+<0.0pc,0.125pc>&&\ar@{-}[2,-2]\\
&&\\
&&\ar@{-}[-1,-1]+<0.125pc,0.0pc>\ar@{-}[-1,-1]+<0.0pc,-0.125pc> }}
\grow{\xymatrix{
\\
\txt{,} }}
$$
respectively. Finally, any other map $g\colon V\to W$ will be geometrically represented by the diagram
$$
\spreaddiagramcolumns{-1.6pc}\spreaddiagramrows{-1.6pc}
\objectmargin{0.0pc}\objectwidth{0.0pc}
\def\objectstyle{\sssize}
\def\labelstyle{\sssize}
\grow{\xymatrix@!0{
\save[]+<0pc,0.2pc> \Drop{} \ar@{-}[1,0]\restore \\
*+[o]+<0.35pc>[F]{g}\ar@{-}[1,0]+<0pc,-0.2pc>\\
& }} \grow{\xymatrix{
\\
\txt{.} }}
$$

\begin{remark} A Sweedler cohomology for module algebras in a symmetric tensor category was presented in \cite{A-F-G}. The version study by us is different of this one, because of the existence of a transposition involved in our definition of $H$-braided module algebras (see section~1).
\end{remark}

Let $V$, $W$ be vector spaces and let $c\colon V\ot W \to W\ot V$ be a map. Recall that:

\begin{itemize}

\item If $V$ is an algebra, then $c$ is compatible with the algebra structure of $V$ if $c
\xcirc (\eta\ot W)= W\ot \eta$ and $c \xcirc (\mu\ot W)=  (W\ot \mu)\xcirc(c\ot V)\xcirc(V\ot
c)$.

\item If $V$ is a coalgebra, then $c$ is compatible with the coalgebra structure of $V$ if $(W\ot
\ep)\xcirc c = \ep\ot W$ and $(W\ot \De) \xcirc c = (c\ot V)\xcirc (V\ot c)\xcirc (\De \ot
W)$.

\end{itemize}

Of course, there are similar compatibilities when $W$ is an algebra or a coalgebra.

\smallskip

Next we recall briefly the concepts of braided bialgebra and braided Hopf algebra following
the presentation given in \cite{T1}. For a study of braided Hopf algebras we refer to
\cite{T1}, \cite{T2}, \cite{Ly1}, \cite{F-M-S}, \cite{A-S}, \cite{D2}, \cite{So} and
\cite{B-K-L-T}.

\begin{definition} A {\em braided bialgebra} is a vector space $H$ endowed with an
algebra structure, a coalgebra structure and a braiding operator $c\in \Aut_k(H^2)$ (called
the {\em braid} of $H$), such that $c$ is compatible with the algebra and coalgebra structures
of $H$, $\De\xcirc\mu = (\mu\ot \mu)\xcirc(H\ot c \ot H)\xcirc(\De \ot \De)$, $\eta$ is a
coalgebra morphism and $\ep$ is an algebra morphism. Furthermore, if there exists a map
$S\colon H\to H$, which is the inverse of the identity map for the convolution product, then
we say that $H$ is a {\em braided Hopf algebra} and we call $S$ the {\em antipode} of $H$.
\label{de1.1}
\end{definition}

Usually $H$ denotes a braided bialgebra, understanding the structure maps, and $c$ denotes
its braid. If necessary, we will use notations as $c_H$, $\mu_H$, etcetera.

\subsection{$\mathbf{H}$-module algebras and $\mathbf{H}$-module coalgebras} Let $H$ be a braided bialgebra.
Recall from \cite[Section~5]{G-G1} that a {\em left $H$-braided space} $(V,s)$ is a vector
space $V$, endowed with a bijective map $s\colon H\ot V\to V\ot H$, which is compatible with
the bialgebra structure of $H$ and satisfies
$$
(s\ot H)\xcirc (H\ot s)\xcirc (c\ot V) = (V\ot c)\xcirc (s\ot H)\xcirc (H\ot s)
$$
(compatibility of $s$ with the braid). Let $(V',s')$ be another left $H$-braided space. A $k$-linear map $f\colon
V \to V'$ is said to be a {\em morphism of left $H$-braided spaces}, from $(V,s)$ to $(V',s')$, if $(f\ot H)\xcirc
s = s' \xcirc (H\ot f)$. We let $\mathcal{LB}_H$ denote the category of all left $H$-braided spaces. It is easy to
check that this is a monoidal category with:

\smallskip

\begin{itemize}

\item unit $(k,\tau)$, where $\tau\colon H\ot k\to k\ot H$ is the flip,

\smallskip

\item tensor product $(V,s_V)\ot(U,s_U) := (V\ot U, s_{V\ot U})$, where $s_{V\ot U}$ is the map $s_{V\ot U}:=
(V\ot s_U)\xcirc(s_V\ot U)$,

\smallskip

\item the usual associativity and unit constraints.

\medskip

\end{itemize}

Let $A$ be an algebra. We recall from \cite{G-G1} that a {\em left transposition} is a bijective map $s\colon H\ot
A\to A\ot H$, satisfying:

\smallskip

\begin{enumerate}

\item $(A,s)$ is a left $H$-braided space,

\smallskip

\item $s$ is compatible with the algebra structure of $A$.

\end{enumerate}

\begin{remark}\label{re1.1 alg in LB_H} It is easy to check that an algebra in $\mathcal{LB}_H$, also called a
{\em left $H$-braided algebra}, is a pair $(A,s)$, consisting of an algebra $A$ and a left transposition $s\colon
H\ot A\to A\ot H$. Let $(A',s')$ be another left $H$-braided algebra. A map $f\colon A\to A'$ is a {\em morphism
of left $H$-braided algebras}, from $(A,s)$ to $(A',s')$, if it is a morphism of standard algebras and $(f\ot H)
\xcirc s = s' \xcirc (H\ot f)$.
\end{remark}

\begin{definition}\label{de1.2 transp coalg} Let $C$ be a coalgebra. A {\em left transposition} of $H$ on $C$ is
a bijective map $s\colon H\ot C\to C\ot H$, satisfying:

\begin{enumerate}

\smallskip

\item $(C,s)$ is a left $H$-braided space,

\smallskip

\item $s$ is compatible with the coalgebra structure of $C$.

\smallskip

\end{enumerate}
\end{definition}

\begin{remark}\label{re1.3 coalg in LB_H} It is easy to check that a coalgebra in $\mathcal{LB}_H$, also called
{\em a left $H$-braided coalgebra}, is a pair $(C,s)$ consisting of a coalgebra $C$ and a left transposition
$s\colon H\ot C\to C\ot H$. Let $(C',s')$ be another left $H$-braided coalgebra. A map $f\colon C\to C'$ is a {\em
morphism of left $H$-braided coalgebras}, from $(C,s)$ to $(C',s')$, if it is a morphism of standard coalgebras
and $(f\ot H)\xcirc s = s' \xcirc (H\ot f)$.
\end{remark}

Note that $(H,c)$ is an algebra in $\mathcal{LB}_H$. Hence, one can consider left and right
$(H,c)$-modules in this monoidal category. To abbreviate we will say that $(V,s)$ is a {\em left
$H$-braided module} or simply a {\em left $H$-module} to mean that it is a left $(H,c)$-module in
$\mathcal{LB}_H$. It is easy to check that a left $H$-braided space $(V,s)$ is a left $H$-module
if and only if $V$ is a standard left $H$-module and $s\xcirc (H\ot \rho) = (\rho\ot H) \xcirc
(H\ot s) \xcirc (c\ot V)$, where $\rho$ denotes the action of $H$ on $V$. Furthermore, a map
$f\colon V\to V'$ is a {\em morphism of left $H$-modules}, from $(V,s)$ to $(V',s')$, if it is
$H$-linear and $(f\ot H)\xcirc s = s'\xcirc (H\ot f)$. We let ${}_H(\mathcal{LB}_H)$ denote the
category of left $H$-braided modules.

\smallskip

Given left $H$-modules $(V,s_V)$ and $(U,s_U)$, with actions $\rho_V$ and $\rho_U$ respectively, we let $\rho_{V
\ot U}$ denote the diagonal action
$$
\rho_{V\ot U}:= (\rho_V\ot \rho_U)\xcirc(H\ot s_V\ot U)\xcirc (\De_H\ot V\ot U).
$$
In the following proposition we show in particular that $(k,\tau)$ is a left $H$-module via the trivial action and
that $(V,s_V)\ot (U,s_U)$ is a left $H$-module via $\rho_{V\ot U}$.

\begin{proposition}[G-G, Proposition~5.6]\label{pr1.4 _H LB_H es monoi} The category ${}_H(\mathcal{LB}_H)$, of
left $H$-braided modules, endowed with the usual associativity and unit constraints, is monoidal.
\end{proposition}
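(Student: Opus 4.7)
The plan is to verify that the three data required for a monoidal structure descend from $\Vect$ to ${}_H(\mathcal{LB}_H)$: (i) the unit $(k,\tau)$ carries a left $H$-module structure, (ii) the tensor product $(V\ot U, s_{V\ot U})$ equipped with the diagonal action $\rho_{V\ot U}$ is a left $H$-module in $\mathcal{LB}_H$, and (iii) the associativity and unit constraints of $\Vect$ lift to morphisms in ${}_H(\mathcal{LB}_H)$. The pentagon and triangle axioms are then automatic because they already hold in $\Vect$.

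For (i), I would take $\rho_k=\ep$ after the canonical identification $H\ot k\cong H$. That $(k,\rho_k)$ is a standard left $H$-module is immediate from $\ep$ being an algebra morphism, and the braided compatibility $\tau\circ (H\ot \rho_k)=(\rho_k\ot H)\circ (H\ot \tau)\circ (c\ot k)$ reduces to $(H\ot \ep)\circ c=\ep\ot H$, which is exactly the compatibility of $c$ with the counit.

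The substantial part is (ii). Checking that $\rho_{V\ot U}$ is an associative unital $H$-action on $V\ot U$ in the usual sense uses coassociativity of $\De$, the bialgebra identity $\De\circ\mu=(\mu\ot\mu)\circ(H\ot c\ot H)\circ(\De\ot\De)$, and the braided compatibility of $s_V$ with $\rho_V$; together these let one split the coproduct of a product, slide the crossing $c$ past the two copies of $\De$, and reassemble the diagonal actions. The main technical obstacle is the identity
$$
s_{V\ot U}\circ (H\ot \rho_{V\ot U})=(\rho_{V\ot U}\ot H)\circ (H\ot s_{V\ot U})\circ (c\ot V\ot U).
$$
I would expand both sides using $s_{V\ot U}=(V\ot s_U)\circ (s_V\ot U)$ and the formula defining $\rho_{V\ot U}$, and then reduce them to a common normal form by successively applying the braided compatibility of $s_V$ with $\rho_V$ and of $s_U$ with $\rho_U$, the hexagon-type relation $(s\ot H)\circ (H\ot s)\circ (c\ot V)=(V\ot c)\circ (s\ot H)\circ (H\ot s)$ for both $s_V$ and $s_U$, the compatibility of $c$ with $\De$, and coassociativity of $\De$. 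This calculation is cleanest in the graphic calculus, where each step corresponds to sliding a strand past a coproduct node or exchanging two adjacent crossings of the same type.

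For (iii), the standard associator of $\Vect$ intertwines $s_{(V\ot U)\ot W}$ and $s_{V\ot(U\ot W)}$ because, by the recursive definition of $s$ on tensor products, both sides unravel to $(V\ot U\ot s_W)\circ (V\ot s_U\ot H)\circ (s_V\ot U\ot W)$; and it is $H$-linear because coassociativity of $\De$ identifies the two candidate diagonal actions on triple tensor products. The unit constraints are handled in the same way using counitality. The main difficulty is thus confined to step (ii), and more specifically to the braided-action compatibility, which is the only point where all of the axioms of a braided bialgebra and of a left $H$-braided space genuinely interact.
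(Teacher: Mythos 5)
Note first that the paper itself gives no proof of this proposition: it is quoted verbatim from \cite{G-G1} (Proposition~5.6), so there is no in-paper argument to compare against; your direct verification is exactly the kind of diagrammatic check that the cited source carries out, and its overall structure --- trivial action on $(k,\tau)$, diagonal action on $(V\ot U,s_{V\ot U})$, constraints inherited from $\Vect$, pentagon and triangle automatic --- is correct. The one place where your sketch under-reports the ingredients is that in steps (ii) and (iii) you also need the compatibility of $s_V$ and $s_U$ with the bialgebra structure of $H$ itself, not only with the actions and the braid: associativity of $\rho_{V\ot U}$ requires $s_V\xcirc(\mu_H\ot V)=(V\ot\mu_H)\xcirc(s_V\ot H)\xcirc(H\ot s_V)$ to split the transposition of a product after invoking $\De\xcirc\mu=(\mu\ot\mu)\xcirc(H\ot c\ot H)\xcirc(\De\ot\De)$, and the $H$-linearity of the associator is \emph{not} a consequence of coassociativity alone --- identifying $\rho_{(V\ot U)\ot W}$ with $\rho_{V\ot(U\ot W)}$ also uses $(V\ot\De_H)\xcirc s_V=(s_V\ot H)\xcirc(H\ot s_V)\xcirc(\De_H\ot V)$, so that transposing $h_{(2)}$ past $V$ and then splitting it agrees with splitting first and transposing both legs. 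These identities are part of the definition of a left $H$-braided space, so nothing in your plan fails; the diagram chase simply forces their use, and you should cite them explicitly alongside the module compatibilities, the hexagon-type relation, and the compatibility of $c$ with $\De$. (In (i) the reduction is to the counit compatibility of $c$ in the other tensor slot, $(\ep\ot H)\xcirc c=H\ot\ep$, but since the braid of $H$ is compatible with the coalgebra structure in both factors this is immaterial.)
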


\begin{definition}[G-G, Definition~5.7]\label{de1.5 H-braid mod alg} We say that $(A,s)$ is a {\em left
$H$-braided module algebra} or simply a {\em left $H$-module algebra} it is an algebra in
${}_H(\mathcal{LB}_H)$.
\end{definition}

\begin{remark}\label{re1.6 car de H-braid mod alg} $(A,s)$ is a left $H$-module algebra if and only if the
following facts hold:

\begin{enumerate}

\smallskip

\item $A$ is an algebra and a standard left $H$-module,

\smallskip

\item $s$ is a left transposition of $H$ on $A$,

\smallskip

\item $s\xcirc (H\ot \rho) = (\rho\ot H) \xcirc (H\ot s) \xcirc (c\ot A)$,

\smallskip

\item $\mu_A\xcirc (\rho\ot \rho)\xcirc (H\ot s\ot A)\xcirc (\De_H \ot A^{2}) = \rho \xcirc (H\ot \mu_A)$,

\smallskip

\item $h\cdot 1 = \ep(h)1$ for all $h\in H$,

\smallskip

\end{enumerate}

\noindent where $\rho$ denotes the action of $H$ on $A$.

\end{remark}

Let $(A',s')$ be another left $H$-module algebra. A map $f\colon A\to A'$ is a {\em morphism of left $H$-module
algebras}, from $(A,s)$ to $(A',s')$, if it is an $H$-linear morphism of standard algebras that satisfies $(f\ot
H)\xcirc s= s'\xcirc (H\ot f)$.

\begin{definition}\label{de1.7 H-braid mod coalg} We say that $(C,s)$ is a {\em left $H$-braided module
coalgebra} or simply a {\em left $H$-module coalgebra} if it is a coalgebra in ${}_H(\mathcal{LB}_H)$.
\end{definition}

\begin{remark}\label{re1.8 car de H-braid mod coalg} $(C,s)$ is a left $H$-module coalgebra if and only if the
following facts hold:

\begin{enumerate}

\smallskip

\item $C$ is a coalgebra and a standard left $H$-module,

\smallskip

\item $s$ is a left transposition of $H$ on $C$,

\smallskip

\item $s\xcirc (H\ot \rho) = (\rho\ot H) \xcirc (H\ot s) \xcirc (c\ot C)$,

\smallskip

\item $(\rho\ot \rho)\xcirc (H\ot s\ot C)\xcirc (\De_H \ot \De_C) = \De_C\xcirc \rho$,

\smallskip

\item $\ep(h\cdot c) = \ep(h)\ep(c)$ for all $h\in H$ and $c\in C$,

\smallskip

\end{enumerate}

\noindent where $\rho$ denotes the action of $H$ on $C$.

\end{remark}

Let $(C',s')$ be another left $H$-module coalgebra. A map $f\colon C\to C'$ is a {\em morphism of left $H$-module
coalgebras}, from $(C,s)$ to $(C',s')$, if it is an $H$-linear morphism of standard coalgebras that satisfies
$(f\ot H) \xcirc s = s' \xcirc (H\ot f)$.

\smallskip

Let $H\ot^s C$ be the coalgebra with underlying vector space $H\ot C$, comultiplication map $\De_{H\ot^s C}:=
(H\ot s\ot C)\xcirc (\De_H\ot \De_C)$ and counit map $\ep_{H\ot^s C}:= \ep_H\ot \ep_C$. Conditions~(4) and (5) say
that $\rho\colon H\ot^s C\to C$ is a morphism of coalgebras.

\medskip

\begin{notations}\label{no1.9} Let $n,m\in\bN$. Given a braided bialgebra $H$ we define the maps:

\smallskip

\begin{enumerate}

\item $c_n^m\colon H^m\ot H^n\to H^n\ot H^m$, recursively by $c^1_1:= c$,
\begin{align*}
& c^1_n:= (H\ot c_{n-1}^1)\xcirc (c\ot H^{n-1}),\\
& c^n_m:=(c_n^{m-1}\ot H)\xcirc (H^{m-1}\ot c^1_n).
\end{align*}

\smallskip

\item $\scc_n\colon H^{2n}\to H^{2n}$, recursively by $sc_1:=c$,
$$
\scc_n:= (H\ot \scc_{n-1}\ot H)\xcirc (c\ot\cdots\ot c).
$$
\end{enumerate}

\end{notations}

\begin{remark}\label{re1.10 accion de c_nm y scc_n} the map $c_n^m$ acts on each element $(h_1\ot \cdots \ot h_m)
\ot (l_1\ot\cdots \ot l_n)$ in $H^m\ot H^n$ carrying the $h_i$'s to the right by means of reiterated applications
of $c$ and the map $\scc_n$ acts on each element $h_1\ot \cdots \ot h_{2n}$ of $H^{2n}$ carrying the $h_i$'s, with
$i$ odd, to the right by means of reiterated applications of $c$.
\end{remark}

\begin{example}\label{ex1.11} Let $H$ be a braided bialgebra and let $n\in \bN$. Then $H^n$ is a left $H$-braided
module coalgebra, with

\begin{itemize}

\item  comultiplication $\De_{H^n}\colon H^n\to H^n\ot H^n$, defined by
$$
\De_{H^n}:=(H\ot \scc_{n-1}\ot H)\xcirc (\De_H\ot \cdots\ot \De_H),
$$

\smallskip

\item counit $\epsilon\ot\cdots\ot \epsilon$ ($n$-times),

\smallskip

\item transposition $c^1_n:H\ot H^n \to H^n \ot H$,

\smallskip

\item action $\rho\colon H\ot H^n\to H^n$ defined by
$$
h\cdot (h_1\ot \cdots\ot h_n) = (hh_1)\ot h_2\ot \cdots \ot h_n.
$$

\end{itemize}

\smallskip

\noi Note that $(c^n_n\ot H)\xcirc (H^n\ot c^1_n)\xcirc(c^1_n\ot H^n) = (H^n\ot c^1_n) \xcirc (c^1_n\ot
H^n)\xcirc(H\ot c^n_n)$.

\end{example}

\subsection{The commutative algebra of central maps}
\begin{definition}\label{de1.12 braided coalg} A {\em braided coalgebra} $(C,\varsigma)$ is a coalgebra $C$
endowed with a bijective map $\varsigma\colon C\ot C\to C\ot C$ that satisfies the braided equation and that is
compatible with the coalgebra structure of $C$. We call $\varsigma$ the {\em braid} of $C$. Let $(C,\varsigma)$ be
a braided coalgebra. We say that $\varsigma$ is {\em involutive} if $\varsigma^2= \ide_C$. If also
$\varsigma\xcirc\Delta_C=\Delta_C$, then $(C,\varsigma)$ is said to be {\em cocommutative}.
\end{definition}

Recall from \cite{B-M} that an entwining structure $(C,A,\psi)$ consists of a coalgebra $C$, an
algebra $A$ and a bijective map $\psi\colon C\ot A\to A\ot C$, which is compatible with the
coalgebra structure of $C$ and the algebra structure of $A$. Assume that $(C,\varsigma)$ is a
braided coalgebra. We say that the entwining structure $(C,A,\psi)$ is {\em compatible with
$\varsigma$} or simply that $(C,\varsigma,A,\psi)$ is an {\em entwining structure} if
$$
(A\ot \varsigma)\xcirc (\psi\ot C)\xcirc (C\ot \psi) = (\psi\ot C )\xcirc (C\ot \psi)\xcirc
(\varsigma\ot A).
$$

\begin{example}\label{ex1.13 entwi struc} Let $H$ be a braided Hopf algebra, $A$ an algebra and $s$ a
transposition of $H$ on $A$. Then $(H^n,c^n_n,A,s^n)$, where $s^n\colon H^n\ot A\to A\ot H^n$ is recursively
defined by $s^1:= s$ and $s^n := (s^{n-1} \ot H)\xcirc (H^{n-1}\ot s)$, is an entwining structure.
\end{example}

\begin{definition}\label{de1.14 compat with psi} Let $(C,\varsigma,A,\psi)$ be an entwining structure. A map
$f\colon C\to A$ is said to be {\em compatible with $\psi$} if $\psi\xcirc(C\ot f) = (f\ot C)\xcirc \varsigma$.
\end{definition}

\begin{remark}\label{re1.15} Let $H$ be a braided bialgebra and let $(H^n,c^n_n,A,s^n)$ be the entwining
structure introduced in Example~\ref{ex1.13 entwi struc}. We will say that a map $f\colon H^n\to A$ is
\emph{compatible with $s$} if $s \xcirc (H\ot f) = (f\ot H)\xcirc c^1_n$. It is easy to see that $f$ is compatible
with $s$ if and only if $f$ is compatible with $s^n$ in the sense of Definition~\ref{de1.14 compat with psi}.
\end{remark}

\begin{definition}\label{de1.16 psi-central} Let $(C,\varsigma,A,\psi)$ be an entwining structure. A map $f\colon
C\to A$ is said to be {\em $\psi$-central} if $\mu_A\xcirc (A\ot f)\xcirc \psi = \mu_A\xcirc (f\ot A)$.
\end{definition}

Let $C$ be a coalgebra and $A$ an algebra. Recall that $\Hom_k(C,A)$ is an associative algebra with unit
$\eta_A\xcirc \ep_C$ via the convolution product $f*g = \mu_A\xcirc (f\ot g)\xcirc \De_C$.

\begin{remark}\label{re1.17 conseq de psi-central o comp con psi} Let $(C,\varsigma,A,\psi)$ be an entwining
structure and let $f,g\colon C\to A$ be maps. If $f$ is compatible with $\psi$ and $g$ is $\psi$-central, then
$g*f = \mu_A\xcirc (f\ot g) \xcirc\varsigma\xcirc \De_C$.
\end{remark}

\begin{proposition}\label{pr1.18 subalgebras de Hom_k(C,A)} Let $(C,\varsigma,A,\psi)$ be an entwining structure.
The following assertions hold:

\begin{enumerate}

\smallskip

\item The set of all the maps from $C$ to $A$ which are compatible with $\psi$ form a
subalgebra of $\Hom_k(C,A)$.

\smallskip

\item The set of all the maps from $C$ to $A$ which are compatible with $\psi$ and
$\psi$-central form a subalgebra of $\Hom_k(C,A)$.

\end{enumerate}
\end{proposition}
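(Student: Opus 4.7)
The plan is to verify that the convolution unit $\eta_A \xcirc \ep_C$ lies in each of the two proposed subsets, and then show that the convolution product preserves each defining property; since both subsets contain the unit, the substantive work reduces to checking closure under $*$.

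For (1), the compatibility of $\eta_A \xcirc \ep_C$ with $\psi$ is an immediate consequence of the normalisation axioms $\psi \xcirc (C \ot \eta_A) = \eta_A \ot C$ and $(\ep_C \ot C) \xcirc \varsigma = C \ot \ep_C$: both $\psi \xcirc (C \ot \eta_A \xcirc \ep_C)$ and $(\eta_A \xcirc \ep_C \ot C) \xcirc \varsigma$ collapse to $(\eta_A \ot C) \xcirc (C \ot \ep_C)$. For closure, suppose $f, g \colon C \to A$ are compatible with $\psi$. I would compute $\psi \xcirc (C \ot (f*g))$ by expanding $f*g = \mu_A \xcirc (f \ot g) \xcirc \De_C$, using compatibility of $\psi$ with $\mu_A$ to rewrite $\psi \xcirc (C \ot \mu_A)$ as $(\mu_A \ot C) \xcirc (A \ot \psi) \xcirc (\psi \ot A)$, and then pushing $f$ and $g$ through by their individual compatibilities. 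The expression becomes $(\mu_A \ot C) \xcirc (f \ot g \ot C) \xcirc (C \ot \varsigma) \xcirc (\varsigma \ot C) \xcirc (C \ot \De_C)$, and the identity $(C \ot \varsigma) \xcirc (\varsigma \ot C) \xcirc (C \ot \De_C) = (\De_C \ot C) \xcirc \varsigma$, which is the compatibility of $\varsigma$ with $\De_C$ on the right factor, collapses the tail and yields $((f*g) \ot C) \xcirc \varsigma$, as required.

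For (2) it suffices, in view of (1), to prove that the $\psi$-central maps form a subalgebra of $\Hom_k(C,A)$; intersection with the subalgebra of compatible maps then gives the statement. That $\eta_A \xcirc \ep_C$ is $\psi$-central reduces, using the unit axiom of $\mu_A$ together with the normalisation $(A \ot \ep_C) \xcirc \psi = \ep_C \ot A$, to observing that both sides of the central equation equal $\ep_C \ot A$. For closure, if $f, g$ are $\psi$-central I would start from $\mu_A \xcirc (A \ot (f*g)) \xcirc \psi$, expand $f*g$, apply the coalgebra compatibility $(A \ot \De_C) \xcirc \psi = (\psi \ot C) \xcirc (C \ot \psi) \xcirc (\De_C \ot A)$, and then use associativity of $\mu_A$ to expose a subterm $\mu_A \xcirc (A \ot f) \xcirc \psi$ in the leftmost slot, which centrality of $f$ replaces by $\mu_A \xcirc (f \ot A)$. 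After a similar rearrangement that exposes $\mu_A \xcirc (A \ot g) \xcirc \psi$ and applies centrality of $g$, the composition collapses to $\mu_A \xcirc ((f*g) \ot A)$.

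The main obstacle will be the bookkeeping in this last step: each use of centrality only fires once the corresponding $\mu_A \xcirc (A \ot h) \xcirc \psi$ block has been brought adjacent to the appropriate $\mu_A$, and the intervening factors involving $\psi$, $f$, $g$ must be slid past one another in a controlled way. I expect the cleanest way to present both closure verifications is in the graphic calculus recalled in Section~1, where these manipulations become straightforward isotopies of string diagrams.
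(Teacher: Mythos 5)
Your proposal is correct, and it splits naturally into two comparisons. For item (1) you are doing essentially the paper's computation: the three string diagrams in the paper are exactly your chain, read from the $((f*g)\ot C)\xcirc\varsigma$ end, using the compatibility of $\varsigma$ with $\De_C$, the compatibility of $f$ and $g$ with $\psi$, and the compatibility of $\psi$ with $\mu_A$; your intermediate expression $(\mu_A\ot C)\xcirc(f\ot g\ot C)\xcirc(C\ot\varsigma)\xcirc(\varsigma\ot C)\xcirc(C\ot\De_C)$ is precisely the paper's middle diagram. For item (2), however, your route is genuinely different and more economical. The paper first invokes Remark~\ref{re1.17 conseq de psi-central o comp con psi} to rewrite the product of two compatible $\psi$-central maps in the twisted form $\mu_A\xcirc(f\ot g)\xcirc\varsigma\xcirc\De_C$, and then shows that this expression is $\psi$-central by a six-step diagram chain which uses, besides centrality of $f$ and $g$, the braided-entwining axiom $(A\ot \varsigma)\xcirc(\psi\ot C)\xcirc(C\ot\psi)=(\psi\ot C)\xcirc(C\ot\psi)\xcirc(\varsigma\ot A)$ and the compatibility of the maps with $\psi$. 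You instead prove the stronger intermediate fact that the $\psi$-central maps by themselves are closed under convolution, using only the entwining identity $(A\ot\De_C)\xcirc\psi=(\psi\ot C)\xcirc(C\ot\psi)\xcirc(\De_C\ot A)$, associativity of $\mu_A$, and centrality of $f$ and $g$ (your bookkeeping is sound: the first rebracketing exposes $\mu_A\xcirc(A\ot f)\xcirc\psi$ on the copy of $\psi$ acting on $c_{(1)}$, the second exposes $\mu_A\xcirc(A\ot g)\xcirc\psi$ on the copy acting on $c_{(2)}$); item (2) then follows by intersecting with the subalgebra of item (1). What this buys is independence from Remark~\ref{re1.17 conseq de psi-central o comp con psi}, from the compatibility of the entwining with $\varsigma$, and from the compatibility of $f,g$ with $\psi$, so you in fact establish a slightly more general statement; what the paper's route buys is that the twisted expression $\mu_A\xcirc(f\ot g)\xcirc\varsigma\xcirc\De_C$ for the convolution, which it needs again later (e.g.\ in the commutativity observation of Notation~\ref{no1.19 conseq de ser psi-central y comp with psi}), is put to work immediately. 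Your explicit check that $\eta_A\xcirc\ep_C$ is compatible and $\psi$-central, which the paper leaves implicit, is also correct and uses exactly the right normalisation axioms.
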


\begin{proof} 1)\enspace This follows from the equalities
$$
\spreaddiagramcolumns{-1.6pc}\spreaddiagramrows{-1.6pc}
\objectmargin{0.0pc}\objectwidth{0.0pc}
\def\objectstyle{\sssize}
\def\labelstyle{\sssize}
\grow{ \xymatrix@!0{
\\
 &\ar@{-}[1,1]+<-0.1pc,0.1pc> && \ar@{-}[2,-2]\\
&&&\\
&\ar@{-}[1,0]&&\ar@{-}[-1,-1]+<0.1pc,-0.1pc>\ar@{-}[1,1]\\
& \ar@{-}`l/4pt [1,-1] [1,-1] \ar@{-}`r [1,1] [1,1]&&&\ar@{-}[7,0]\\
\ar@{-}[1,0]&&\ar@{-}[1,0]&&\\
&&&&\\
*+[o]+<0.35pc>[F]{f}\ar@{-}[2,0]&&*+[o]+<0.45pc>[F]{g}\ar@{-}[2,0]&&\\
&&&&\\
\ar@{-}`d/4pt [1,1] `[0,2] [0,2] &&&&\\
&\ar@{-}[1,0]&&&\\
&&&& }}
\grow{ \xymatrix@!0{
\\\\\\\\\\\\
\txt{=} }}
\grow{ \xymatrix@!0{
\ar@{-}[2,0]&&&\ar@{-}[1,0]\\
&&& \ar@{-}`l/4pt [1,-1] [1,-1] \ar@{-}`r [1,1] [1,1]&\\
\ar@{-}[1,1]+<-0.1pc,0.1pc> && \ar@{-}[2,-2]&&\ar@{-}[2,0]\\
&&&&\\
\ar@{-}[2,0]&&\ar@{-}[-1,-1]+<0.1pc,-0.1pc>\ar@{-}[1,1]+<-0.1pc,0.1pc> && \ar@{-}[2,-2]\\
&&&&\\
\ar@{-}[2,0]&&\ar@{-}[2,0]&&\ar@{-}[-1,-1]+<0.1pc,-0.1pc>\ar@{-}[6,0]\\
&&&&\\
*+[o]+<0.35pc>[F]{f}\ar@{-}[2,0]&&*+[o]+<0.45pc>[F]{g}\ar@{-}[2,0]&&\\
&&&&\\
\ar@{-}`d/4pt [1,1] `[0,2] [0,2] &&&&\\
&\ar@{-}[1,0]&&&\\
&&&& }}
\grow{ \xymatrix@!0{
\\\\\\\\\\\\
\txt{=} }}
\grow{ \xymatrix@!0{
\\
\ar@{-}[7,0]&&&\ar@{-}[1,0]&\\
&&& \ar@{-}`l/4pt [1,-1] [1,-1] \ar@{-}`r [1,1] [1,1]&\\
&&\ar@{-}[1,0]&&\ar@{-}[1,0]\\
&&&&\\
&&*+[o]+<0.35pc>[F]{f}\ar@{-}[2,0]&&*+[o]+<0.45pc>[F]{g}\ar@{-}[2,0]\\
&&&&\\
&&\ar@{-}`d/4pt [1,1] `[0,2] [0,2] &&\\
\ar@{-}[1,1]&&&\ar@{-}[1,0]&\\
&\ar@{-}[1,1]+<-0.125pc,0.0pc> \ar@{-}[1,1]+<0.0pc,0.125pc>&&\ar@{-}[2,-2]&\\
&&&&\\
&&&\ar@{-}[-1,-1]+<0.125pc,0.0pc>\ar@{-}[-1,-1]+<0.0pc,-0.125pc>& }}
\grow{ \xymatrix@!0{
\\\\\\\\\\\\
\txt{.} }}
$$

\noindent 2)\enspace By Remark~\ref{re1.17 conseq de psi-central o comp con psi} it suffices to note that
$$
\spreaddiagramcolumns{-1.6pc}\spreaddiagramrows{-1.6pc}
\objectmargin{0.0pc}\objectwidth{0.0pc}
\def\objectstyle{\sssize}
\def\labelstyle{\sssize}
\grow{ \xymatrix@!0{
\\
&\ar@{-}[1,1]+<-0.125pc,0.0pc> \ar@{-}[1,1]+<0.0pc,0.125pc>&&\ar@{-}[2,-2]&\\
&&&&\\
&\ar@{-}[1,-1]&&\ar@{-}[-1,-1]+<0.125pc,0.0pc>\ar@{-}[-1,-1]+<0.0pc,-0.125pc>\ar@{-}[1,0]&\\
\ar@{-}[9,0]&&&\ar@{-}`l/4pt [1,-1] [1,-1] \ar@{-}`r [1,1] [1,1]&\\
&&\ar@{-}[1,1]+<-0.1pc,0.1pc> && \ar@{-}[2,-2]\\
&&&&\\
&&\ar@{-}[2,0]&&\ar@{-}[2,0]\ar@{-}[-1,-1]+<0.1pc,-0.1pc>\\
&&&&\\
&&*+[o]+<0.35pc>[F]{f}\ar@{-}[2,0]&&*+[o]+<0.45pc>[F]{g}\ar@{-}[2,0]\\
&&&&\\
&&\ar@{-}`d/4pt [1,1] `[0,2] [0,2]&&\\
&&&\ar@{-}[1,0]&\\
\ar@{-}`d/4pt [1,1]+<0.2pc,0pc> `[0,3] [0,3] &&&&\\
&\save[]+<0.2pc,0pc> \Drop{}\ar@{-}[1,0]+<0.2pc,0pc>\restore &&&\\
&\save[]+<0.2pc,0pc> \Drop{}\restore&&& }}
\grow{ \xymatrix@!0{
\\\\\\\\\\\\\\\\
\txt{=} }}
\grow{ \xymatrix@!0{
&\ar@{-}[1,0]&&&\ar@{-}[2,0]\\
&\ar@{-}`l/4pt [1,-1] [1,-1] \ar@{-}`r [1,1] [1,1]&&&\\
\ar@{-}[2,0]&&\ar@{-}[1,1]+<-0.125pc,0.0pc> \ar@{-}[1,1]+<0.0pc,0.125pc>&&\ar@{-}[2,-2]\\
&&&&\\
\ar@{-}[1,1]+<-0.125pc,0.0pc> \ar@{-}[1,1]+<0.0pc,0.125pc>&&\ar@{-}[2,-2]&&
    \ar@{-}[-1,-1]+<0.125pc,0.0pc>\ar@{-}[-1,-1]+<0.0pc,-0.125pc>\ar@{-}[2,0]\\
&&&&\\
\ar@{-}[8,0]&&\ar@{-}[-1,-1]+<0.125pc,0.0pc>\ar@{-}[-1,-1]+<0.0pc,-0.125pc>
    \ar@{-}[1,1]+<-0.1pc,0.1pc> && \ar@{-}[2,-2]\\
&&&&\\
&&\ar@{-}[2,0]&&\ar@{-}[2,0]\ar@{-}[-1,-1]+<0.1pc,-0.1pc>\\
&&&&\\
&&*+[o]+<0.35pc>[F]{f}\ar@{-}[2,0]&&*+[o]+<0.45pc>[F]{g}\ar@{-}[2,0]\\
&&&&\\
&&\ar@{-}`d/4pt [1,1] `[0,2] [0,2]&&\\
&&&\ar@{-}[1,0]&\\
\ar@{-}`d/4pt [1,1]+<0.2pc,0pc> `[0,3] [0,3] &&&&\\
&\save[]+<0.2pc,0pc> \Drop{}\ar@{-}[1,0]+<0.2pc,0pc>\restore &&&\\
&\save[]+<0.2pc,0pc> \Drop{}\restore&&& }}
\grow{ \xymatrix@!0{
\\\\\\\\\\\\\\\\
\txt{=} }}
\grow{ \xymatrix@!0{
&\ar@{-}[1,0]&&&\ar@{-}[4,0]\\
&\ar@{-}`l/4pt [1,-1] [1,-1] \ar@{-}`r [1,1] [1,1]&&&\\
\ar@{-}[1,1]+<-0.1pc,0.1pc> && \ar@{-}[2,-2]&&\\
&&&&\\
\ar@{-}[2,0]&&\ar@{-}[-1,-1]+<0.1pc,-0.1pc>
   \ar@{-}[1,1]+<-0.125pc,0.0pc> \ar@{-}[1,1]+<0.0pc,0.125pc>&&\ar@{-}[2,-2]\\
&&&&\\
\ar@{-}[1,1]+<-0.125pc,0.0pc> \ar@{-}[1,1]+<0.0pc,0.125pc>&&\ar@{-}[2,-2]&&
    \ar@{-}[-1,-1]+<0.125pc,0.0pc>\ar@{-}[-1,-1]+<0.0pc,-0.125pc>\ar@{-}[4,0]\\
&&&&\\
\ar@{-}[4,0]&&\ar@{-}[-1,-1]+<0.125pc,0.0pc>\ar@{-}[-1,-1]+<0.0pc,-0.125pc>\ar@{-}[2,0]&&\\
&&&&\\
&&*+[o]+<0.35pc>[F]{f}\ar@{-}[2,0]&&*+[o]+<0.45pc>[F]{g}\ar@{-}[4,0]\\
&&&&\\
\ar@{-}`d/4pt [1,1] `[0,2] [0,2]&&&&\\
&\ar@{-}[1,0]&&&\\
&\ar@{-}`d/4pt [1,1]+<0.2pc,0pc> `[0,3] [0,3]&&&\\
&&\save[]+<0.2pc,0pc> \Drop{}\ar@{-}[1,0]+<0.2pc,0pc>\restore&&\\
&&\save[]+<0.2pc,0pc> \Drop{}\restore&& }}
\grow{ \xymatrix@!0{
\\\\\\\\\\\\\\\\
\txt{=} }}
\grow{ \xymatrix@!0{
\\
&\ar@{-}[1,0]&&&\ar@{-}[4,0]\\
&\ar@{-}`l/4pt [1,-1] [1,-1] \ar@{-}`r [1,1] [1,1]&&&\\
\ar@{-}[1,1]+<-0.1pc,0.1pc> && \ar@{-}[2,-2]&&\\
&&&&\\
\ar@{-}[4,0]&&\ar@{-}[-1,-1]+<0.1pc,-0.1pc>
   \ar@{-}[1,1]+<-0.125pc,0.0pc> \ar@{-}[1,1]+<0.0pc,0.125pc>&&\ar@{-}[2,-2]\\
&&&&\\
&&\ar@{-}[4,0]&&\ar@{-}[-1,-1]+<0.125pc,0.0pc>\ar@{-}[-1,-1]+<0.0pc,-0.125pc>\ar@{-}[2,0]\\
&&&&\\
*+[o]+<0.35pc>[F]{f}\ar@{-}[2,0]&&&&*+[o]+<0.45pc>[F]{g}\ar@{-}[4,0]\\
&&&&\\
\ar@{-}`d/4pt [1,1] `[0,2] [0,2]&&&&\\
&\ar@{-}[1,0]&&&\\
&\ar@{-}`d/4pt [1,1]+<0.2pc,0pc> `[0,3] [0,3]&&&\\
&&\save[]+<0.2pc,0pc> \Drop{}\ar@{-}[1,0]+<0.2pc,0pc>\restore&&\\
&&\save[]+<0.2pc,0pc> \Drop{}\restore&& }}
\grow{ \xymatrix@!0{
\\\\\\\\\\\\\\\\
\txt{=} }}
\grow{ \xymatrix@!0{
\\
&\ar@{-}[1,0]&&&\ar@{-}[4,0]\\
&\ar@{-}`l/4pt [1,-1] [1,-1] \ar@{-}`r [1,1] [1,1]&&&\\
\ar@{-}[1,1]+<-0.1pc,0.1pc> && \ar@{-}[2,-2]&&\\
&&&&\\
\ar@{-}[4,0]&&\ar@{-}[-1,-1]+<0.1pc,-0.1pc>
   \ar@{-}[1,1]+<-0.125pc,0.0pc> \ar@{-}[1,1]+<0.0pc,0.125pc>&&\ar@{-}[2,-2]\\
&&&&\\
&&\ar@{-}[4,0]&&\ar@{-}[-1,-1]+<0.125pc,0.0pc>\ar@{-}[-1,-1]+<0.0pc,-0.125pc>\ar@{-}[2,0]\\
&&&&\\
*+[o]+<0.35pc>[F]{f}\ar@{-}[4,0]&&&&*+[o]+<0.45pc>[F]{g}\ar@{-}[2,0]\\
&&&&\\
&&\ar@{-}`d/4pt [1,1] `[0,2] [0,2]&&\\
&&&\ar@{-}[1,0]&\\
\ar@{-}`d/4pt [1,1]+<0.2pc,0pc> `[0,3] [0,3]&&&&\\
&\save[]+<0.2pc,0pc> \Drop{}\ar@{-}[1,0]+<0.2pc,0pc>\restore&&&\\
&\save[]+<0.2pc,0pc> \Drop{}\restore&&& }}
\grow{ \xymatrix@!0{
\\\\\\\\\\\\\\\\
\txt{=} }}
\grow{ \xymatrix@!0{
\\\\
&\ar@{-}[1,0]&&&\ar@{-}[8,0]\\
&\ar@{-}`l/4pt [1,-1] [1,-1] \ar@{-}`r [1,1] [1,1]&&&\\
\ar@{-}[1,1]+<-0.1pc,0.1pc> && \ar@{-}[2,-2]&&\\
&&&&\\
\ar@{-}[2,0]&&\ar@{-}[-1,-1]+<0.1pc,-0.1pc>\ar@{-}[2,0]&&\\
&&&&\\
*+[o]+<0.35pc>[F]{f}\ar@{-}[4,0]&&*+[o]+<0.45pc>[F]{g}\ar@{-}[2,0]&&\\
&&&&\\
&&\ar@{-}`d/4pt [1,1] `[0,2] [0,2]&&\\
&&&\ar@{-}[1,0]&\\
\ar@{-}`d/4pt [1,1]+<0.2pc,0pc> `[0,3] [0,3]&&&&\\
&\save[]+<0.2pc,0pc> \Drop{}\ar@{-}[1,0]+<0.2pc,0pc>\restore&&&\\
&\save[]+<0.2pc,0pc> \Drop{}\restore&&& }}
\grow{ \xymatrix@!0{
\\\\\\\\\\\\\\\\
\txt{=} }}
\grow{ \xymatrix@!0{
\\\\
&\ar@{-}[1,0]&&&\ar@{-}[10,0]\\
&\ar@{-}`l/4pt [1,-1] [1,-1] \ar@{-}`r [1,1] [1,1]&&&\\
\ar@{-}[1,1]+<-0.1pc,0.1pc> && \ar@{-}[2,-2]&&\\
&&&&\\
\ar@{-}[2,0]&&\ar@{-}[-1,-1]+<0.1pc,-0.1pc>\ar@{-}[2,0]&&\\
&&&&\\
*+[o]+<0.35pc>[F]{f}\ar@{-}[2,0]&&*+[o]+<0.45pc>[F]{g}\ar@{-}[2,0]&&\\
&&&&\\
\ar@{-}`d/4pt [1,1] `[0,2] [0,2]&&&&\\
&\ar@{-}[1,0]&&&\\
&\ar@{-}`d/4pt [1,1]+<0.2pc,0pc> `[0,3] [0,3]&&&\\
&&\save[]+<0.2pc,0pc> \Drop{}\ar@{-}[1,0]+<0.2pc,0pc>\restore&&\\
&&\save[]+<0.2pc,0pc> \Drop{}\restore&& }}
\grow{ \xymatrix@!0{
\\\\\\\\\\\\\\\\
\txt{.} }}
$$
\end{proof}

\begin{notation}\label{no1.19 conseq de ser psi-central y comp with psi} We let $\Hom_k^{\psi}(C,A)$ denote the
subalgebra of $\Hom_k(C,A)$ consisting of all the maps from $C$ to $A$ which are compatible with $\psi$ and
$\psi$-central. Note that if $(C,\varsigma)$ is cocommutative, then $\Hom_k^{\psi}(C,A)$ is commutative.
\end{notation}

Let $(C,\varsigma,A,\psi)$ be an entwining structure and let $f$ be a convolution invertible element in
$\Hom_k(C,A)$. Assume that $(C,\varsigma)$ is cocommutative. Next we will prove that if $f$ is compatible with
$\psi$ and $\psi$-central, then $f^{-1}$ is so too. To carry out this task we will need the following result (see
\cite[Pag.~91]{Mo}).

\begin{lemma}\label{le1.20} Let $A$ be an algebra and $C$ a coalgebra. Let $\End_A^C(C\ot A)$ be the $k$-algebra
of all right $A$-linear and left $C$-colinear endomorphisms of $C\ot A$. The map $T_A^C\colon \Hom_k(C,A)\to
\End_A^C(C\ot A)$, given by $T_A^C(g)(c\ot a) = c_{(1)}\ot g(c_{(2)})a$, is an isomorphism of algebras (here
$\Hom_k(C,A)$ is considered as an algebra via the convolution product and $\End_A^C(C\ot A)$ is considered as an
algebra via the composition of endomorphisms). The inverse map of $T_A^C$ is given by \hbox{$(T_A^C)^{-1}(g)(c) =
(\ep\ot A) \xcirc g(c\ot 1)$}.
\end{lemma}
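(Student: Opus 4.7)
The plan is to verify the four claims that make up the lemma: (i) $T_A^C(g)$ really does land in $\End_A^C(C\ot A)$, (ii) the proposed inverse $U(g)(c):=(\ep\ot A)\xcirc g(c\ot 1)$ is a two-sided inverse of $T_A^C$, (iii) $T_A^C$ preserves the unit, and (iv) $T_A^C$ sends convolution to composition. All four are direct Sweedler-notation calculations; I sketch each in turn.

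For (i), right $A$-linearity of $T_A^C(g)(c\ot a)=c_{(1)}\ot g(c_{(2)})a$ is immediate from the formula. Left $C$-colinearity follows from coassociativity: applying $\De\ot A$ to $T_A^C(g)(c\ot a)$ gives $c_{(1)}\ot c_{(2)}\ot g(c_{(3)})a$, while applying $C\ot T_A^C(g)$ to $\De(c)\ot a$ yields the same expression. For (iii), $T_A^C(\eta_A\xcirc\ep_C)(c\ot a)=c_{(1)}\ot\ep(c_{(2)})\cdot 1\cdot a=c\ot a$ by counitality, so the unit is preserved.

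The main computation is (ii) and (iv). For $U\xcirc T_A^C=\ide$ one has $U(T_A^C(g))(c)=(\ep\ot A)(c_{(1)}\ot g(c_{(2)}))=\ep(c_{(1)})g(c_{(2)})=g(c)$ by counitality. The genuinely useful direction is $T_A^C\xcirc U=\ide$: given $g\in\End_A^C(C\ot A)$, left $C$-colinearity applied to $c\ot 1$ reads
\[
 (\De_C\ot A)\xcirc g(c\ot 1)=(C\ot g)(\De_C(c)\ot 1)=\textstyle\sum c_{(1)}\ot g(c_{(2)}\ot 1).
\]
Applying $C\ot\ep\ot A$ to both sides, the left side collapses to $g(c\ot 1)$ by counitality, while the right side becomes $\sum c_{(1)}\ot U(g)(c_{(2)})$. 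Hence $g(c\ot 1)=\sum c_{(1)}\ot U(g)(c_{(2)})$, and right $A$-linearity extends this to arbitrary $a\in A$: $g(c\ot a)=g(c\ot 1)\cdot a=T_A^C(U(g))(c\ot a)$. For (iv), a straightforward use of coassociativity gives
\[
 T_A^C(g)\xcirc T_A^C(h)(c\ot a)=c_{(1)}\ot g(c_{(2)})h(c_{(3)})a=T_A^C(g*h)(c\ot a).
\]

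I do not expect a serious obstacle: the only subtle point is the direction $T_A^C\xcirc U=\ide$, where one has to use both structural properties (colinearity and $A$-linearity) of an arbitrary element of $\End_A^C(C\ot A)$ to recover it from its restriction to $C\ot 1$. Once that trick is in place, the rest reduces to unpacking definitions with coassociativity and counitality.
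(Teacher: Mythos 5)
Your proof is correct and complete: all four verifications (well-definedness into $\End_A^C(C\ot A)$, the two inverse compositions, unitality, and multiplicativity $T_A^C(g*h)=T_A^C(g)\xcirc T_A^C(h)$) check out, and you correctly identify and execute the one nontrivial step, namely recovering an arbitrary $g\in\End_A^C(C\ot A)$ from its restriction to $C\ot 1$ by combining left $C$-colinearity with right $A$-linearity. Note that the paper does not prove this lemma at all --- it simply cites \cite[Pag.~91]{Mo} --- so your write-up supplies exactly the standard argument that the cited reference contains.
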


Let $f\in \Hom_k(C,A)$. It is easy to see that $f$ is compatible with $\psi$ if and only if
$$
(C\ot \psi)\xcirc(\varsigma\ot A)\xcirc (C\ot T_A^C(f)) = (T_A^C(f)\ot C)\xcirc (C\ot
\psi)\xcirc(\varsigma\ot A).
$$

\begin{theorem}\label{th1.21} Let $f\in \Hom_k(C,A)$ be a convolution invertible element. If $f$ is compatible
with $\psi$ and $\psi$-central, then $f^{-1}$ is also.
\end{theorem}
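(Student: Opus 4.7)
The plan is to establish separately that $f^{-1}$ is compatible with $\psi$ and that it is $\psi$-central.

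For the first part I would apply Lemma~\ref{le1.20} together with the characterization stated right before the theorem: $f$ is compatible with $\psi$ precisely when the operator $F:=T_A^C(f)\in \End_A^C(C\ot A)$ satisfies $\Phi\xcirc (C\ot F)=(F\ot C)\xcirc \Phi$, where $\Phi:=(C\ot \psi)\xcirc(\varsigma\ot A)$. Since $T_A^C$ is an algebra isomorphism and $f$ is convolution invertible, $F$ is invertible in $\End_A^C(C\ot A)$ with inverse $F^{-1}=T_A^C(f^{-1})$. Pre-composing the intertwining relation with $C\ot F^{-1}$ and post-composing with $F^{-1}\ot C$ produces $\Phi\xcirc (C\ot F^{-1}) = (F^{-1}\ot C)\xcirc \Phi$, which translates back to compatibility of $f^{-1}$ with $\psi$.

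For $\psi$-centrality I would use a convolution cancellation argument. Fixing $a\in A$, define $\phi_a,\phi_a'\colon C\to A$ by $\phi_a(c):=\mu_A\xcirc(A\ot f^{-1})\xcirc\psi(c\ot a)$ and $\phi_a'(c):=f^{-1}(c)a$; the $\psi$-centrality of $f^{-1}$ is equivalent to $\phi_a=\phi_a'$ for every $a$. I would verify this by showing $f*\phi_a=f*\phi_a'$ in $\Hom_k(C,A)$ and then cancelling $f$ on the left with $f^{-1}$. The identity $f*\phi_a'(c)=\ep_C(c)a$ is immediate from $f*f^{-1}=\eta_A\ep_C$. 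For $f*\phi_a$, expanding $\psi(c_{(2)}\ot a)=\sum a_\alpha\ot c_{(2)}^\alpha$ and invoking $\psi$-centrality of $f$ to rewrite each $f(c_{(1)})a_\alpha$ as $\mu_A\xcirc(A\ot f)\xcirc\psi(c_{(1)}\ot a_\alpha)$, one may then collapse the resulting double sum via the coalgebra compatibility $(A\ot \De_C)\xcirc\psi=(\psi\ot C)\xcirc(C\ot\psi)\xcirc(\De_C\ot A)$ into $\sum_\gamma a_\gamma (f*f^{-1})(c^\gamma)=\sum_\gamma a_\gamma\ep_C(c^\gamma)$, where $\psi(c\ot a)=\sum_\gamma a_\gamma\ot c^\gamma$; the counit compatibility $(A\ot \ep_C)\xcirc\psi=\ep_C\ot A$ finally gives $\ep_C(c)a$.

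I expect the main obstacle to be the bookkeeping in this last step: three interleaved Sweedler-type summations (one coming from $\De_C$ and two from successive applications of $\psi$) must be realigned using the coalgebra compatibility of $\psi$. Graphical calculus is well suited to this manipulation and should render the whole computation as a short diagrammatic chain. Note that cocommutativity of $\varsigma$ does not seem to enter the argument; it was invoked before Notation~\ref{no1.19} only to ensure commutativity of $\Hom_k^\psi(C,A)$, rather than its closure under convolution inverse.
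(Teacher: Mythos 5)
Your proof is correct, and while the first half coincides with the paper's, the second half takes a genuinely different route. For compatibility of $f^{-1}$ with $\psi$ you do exactly what the paper does: pass through Lemma~\ref{le1.20} and the intertwining characterization displayed just before the theorem, and invert, using $T_A^C(f^{-1})=T_A^C(f)^{-1}$. For $\psi$-centrality, however, the paper argues inside $\End_A^C(C\ot A)$: it shows by a long graphical computation that $T_A^C(f)$ composed with the map $c\ot a\mapsto c_{(1)}\ot\mu_A\circ(A\ot f^{-1})\circ\psi(c_{(2)}\ot a)$ equals $\ide_{C\ot A}$, and that computation consumes the compatibility of $\psi$ with $\mu_A$ and with $\De_C$, the compatibility of $f^{-1}$ with $\psi$ already established, and, at one equality, the cocommutativity of $(C,\varsigma)$. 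Your cancellation argument replaces all of this and needs only the $\psi$-centrality of $f$, the comultiplication and counit compatibilities of $\psi$, and associativity and unitality of convolution (which is exactly what legitimizes the cancellation $f*\phi_a=f*\phi_a'\Rightarrow\phi_a=\phi_a'$); the bookkeeping you anticipated reduces to a single application of $(A\ot\De_C)\circ\psi=(\psi\ot C)\circ(C\ot\psi)\circ(\De_C\ot A)$ followed by $(A\ot\ep_C)\circ\psi=\ep_C\ot A$, and I have checked that it closes up as you describe. So your final remark is essentially right: closure of $\psi$-centrality under convolution inverses holds in any entwining structure, with no hypothesis on $\varsigma$ and independently of the compatibility half, whereas the paper's proof really does spend cocommutativity at that step; the only place your argument still leans on the paper's standing setup is the ``easy to see'' $T_A^C$-characterization of compatibility, which both proofs quote without proof. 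What each approach buys: yours is shorter, avoids the diagram chase, and is slightly more general; the paper's keeps the entire verification inside the endomorphism algebra of Lemma~\ref{le1.20}, the same device it immediately reuses for the $H$-linear refinement $\Hom_H^{\psi}((C,s_C),(A,s_A))$.
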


\begin{proof} Let $g$ be the convolution inverse of $f$. The fact that $g$ is compatible with $\psi$ it follows
immediately from the above comment. To see that it is $\psi$-central it is sufficient to check that
$$
\spreaddiagramcolumns{-1.6pc}\spreaddiagramrows{-1.6pc}
\objectmargin{0.0pc}\objectwidth{0.0pc}
\def\objectstyle{\sssize}
\def\labelstyle{\sssize}
\grow{\xymatrix@!0{
\\
& \save\go+<0pt,3pt>\Drop{C}\restore \ar@{-}[1,0]&&& \save\go+<0pt,3pt>\Drop{A}\restore
    \ar@{-}[6,0]\\
&\ar@{-}`l/4pt [1,-1] [1,-1] \ar@{-}`r [1,1] [1,1]&&&\\
\ar@{-}[6,0]&&\ar@{-}[2,0]&&\\
&&&&\\
&&*+[o]+<0.40pc>[F]{g}\ar@{-}[2,0]&&\\
&&&&\\
&&\ar@{-}`d/4pt [1,1] `[0,2] [0,2] &&\\
&&&\ar@{-}[1,0]&\\
&&&& }}
 \grow{\xymatrix@!0{
\\\\\\\\\\
\save\go+<0pt,0pt>\Drop{\txt{=}}\restore }}
\grow{ \xymatrix@!0{ & \save\go+<0pt,3pt>\Drop{C}\restore \ar@{-}[1,0]&&&
\save\go+<0pt,3pt>\Drop{A}\restore
    \ar@{-}[2,0]\\
&\ar@{-}`l/4pt [1,-1] [1,-1] \ar@{-}`r [1,1] [1,1]&&&\\
 \ar@{-}[8,0]&&\ar@{-}[1,1]+<-0.125pc,0.0pc> \ar@{-}[1,1]+<0.0pc,0.125pc>&&\ar@{-}[2,-2]\\
&&&&\\
&&\ar@{-}[4,0]&&\ar@{-}[-1,-1]+<0.125pc,0.0pc>\ar@{-}[-1,-1]+<0.0pc,-0.125pc>\ar@{-}[2,0]\\
&&&&\\
&&&&*+[o]+<0.40pc>[F]{g}\ar@{-}[2,0]\\
&&&&\\
&&\ar@{-}`d/4pt [1,1] `[0,2] [0,2]&&\\
&&&\ar@{-}[1,0]&\\
&&&& }}
 \grow{\xymatrix@!0{
\\\\\\\\\\
\save\go+<0pt,0pt>\Drop{\txt{.}}\restore }}
$$
But this follows immediately from Lemma~\ref{le1.20} and the fact that
$$
\spreaddiagramcolumns{-1.6pc}\spreaddiagramrows{-1.6pc}
\objectmargin{0.0pc}\objectwidth{0.0pc}
\def\objectstyle{\sssize}
\def\labelstyle{\sssize}
\grow{ \xymatrix@!0{ && \save\go+<0pt,3pt>\Drop{C}\restore \ar@{-}[1,0]&&&
\save\go+<0pt,3pt>\Drop{A}\restore
    \ar@{-}[2,0]\\
&&\ar@{-}`l/4pt [1,-1] [1,-1] \ar@{-}`r [1,1] [1,1]&&&\\
& \ar@{-}[7,0]&&\ar@{-}[1,1]+<-0.125pc,0.0pc> \ar@{-}[1,1]+<0.0pc,0.125pc>&&\ar@{-}[2,-2]\\
&&&&&\\
&&&\ar@{-}[4,0]&&\ar@{-}[-1,-1]+<0.125pc,0.0pc>\ar@{-}[-1,-1]+<0.0pc,-0.125pc>\ar@{-}[2,0]\\
&&&&&\\
&&&&&*+[o]+<0.40pc>[F]{g}\ar@{-}[2,0]\\
&&&&&\\
&&&\ar@{-}`d/4pt [1,1] `[0,2] [0,2]&&\\
&  \ar@{-}[1,0]&&& \ar@{-}[6,0]\\
&\ar@{-}`l/4pt [1,-1] [1,-1] \ar@{-}`r [1,1] [1,1]&&&\\
\ar@{-}[6,0]&&\ar@{-}[2,0]&&\\
&&&&\\
&&*+[o]+<0.40pc>[F]{f}\ar@{-}[2,0]&&\\
&&&&\\
&&\ar@{-}`d/4pt [1,1] `[0,2] [0,2] &&\\
&&&\ar@{-}[1,0]&\\
&&&&  }}
\grow{\xymatrix@!0{
\\\\\\\\\\\\\\\\\\
\save\go+<0pt,0pt>\Drop{\txt{=}}\restore }}
\grow{ \xymatrix@!0{ && \save\go+<0pt,3pt>\Drop{C}\restore \ar@{-}[1,0]&&&
\save\go+<0pt,3pt>\Drop{A}\restore
    \ar@{-}[2,0]\\
&&\ar@{-}`l/4pt [1,-1] [1,-1] \ar@{-}`r [1,1] [1,1]&&&\\
& \ar@{-}[7,0]&&\ar@{-}[1,1]+<-0.125pc,0.0pc> \ar@{-}[1,1]+<0.0pc,0.125pc>&&\ar@{-}[2,-2]\\
&&&&&\\
&&&\ar@{-}[4,0]&&\ar@{-}[-1,-1]+<0.125pc,0.0pc>\ar@{-}[-1,-1]+<0.0pc,-0.125pc>\ar@{-}[2,0]\\
&&&&&\\
&&&&&*+[o]+<0.40pc>[F]{g}\ar@{-}[2,0]\\
&&&&&\\
&&&\ar@{-}`d/4pt [1,1] `[0,2] [0,2]&&\\
& \ar@{-}`l/4pt [1,-1] [1,-1] \ar@{-}`r [1,1] [1,1]&&&\ar@{-}[1,0]&\\
\ar@{-}[8,0]&&\ar@{-}[1,1]+<-0.125pc,0.0pc> \ar@{-}[1,1]+<0.0pc,0.125pc>&&\ar@{-}[2,-2]&\\
&&&&&\\
&&\ar@{-}[4,0]&&\ar@{-}[-1,-1]+<0.125pc,0.0pc>\ar@{-}[-1,-1]+<0.0pc,-0.125pc>\ar@{-}[2,0]&\\
&&&&&\\
&&&&*+[o]+<0.40pc>[F]{f}\ar@{-}[2,0]&\\
&&&&&\\
&&\ar@{-}`d/4pt [1,1] `[0,2] [0,2]&&&\\
&&&\ar@{-}[1,0]&&\\
&&&&& }}
\grow{\xymatrix@!0{
\\\\\\\\\\\\\\\\\\
\save\go+<0pt,0pt>\Drop{\txt{=}}\restore }}
\grow{ \xymatrix@!0{ &&& \save\go+<0pt,3pt>\Drop{C}\restore \ar@{-}[1,0]&&&
\save\go+<0pt,3pt>\Drop{A}\restore
    \ar@{-}[2,0]\\
&&&\ar@{-}`l/4pt [1,-1] [1,-1] \ar@{-}`r [1,1] [1,1]&&&\\
&& \ar@{-}[2,0]&&\ar@{-}[1,1]+<-0.125pc,0.0pc> \ar@{-}[1,1]+<0.0pc,0.125pc>&&\ar@{-}[2,-2]\\
&&&&&&\\
&&\ar@{-}[1,-1]&&\ar@{-}[4,0]&&\ar@{-}[-1,-1]+<0.125pc,0.0pc>
    \ar@{-}[-1,-1]+<0.0pc,-0.125pc>\ar@{-}[2,0]\\
&\ar@{-}[2,0]&&&&&\\
&&&&&&*+[o]+<0.40pc>[F]{g}\ar@{-}[4,0]\\
& \ar@{-}`l/4pt [1,-1] [1,-1] \ar@{-}`r [1,1] [1,1]&&&&&\\
\ar@{-}[10,0]&&\ar@{-}[1,1]+<-0.125pc,0.0pc> \ar@{-}[1,1]+<0.0pc,0.125pc>&&\ar@{-}[2,-2]&&\\
&&&&&&\\
&&\ar@{-}[2,0]&&\ar@{-}[-1,-1]+<0.125pc,0.0pc>\ar@{-}[-1,-1]+<0.0pc,-0.125pc>
  \ar@{-}[1,1]+<-0.125pc,0.0pc> \ar@{-}[1,1]+<0.0pc,0.125pc>&&\ar@{-}[2,-2]\\
&&&&&&\\
&&\ar@{-}`d/4pt [1,1] `[0,2] [0,2]&&&&\ar@{-}[-1,-1]+<0.125pc,0.0pc>
    \ar@{-}[-1,-1]+<0.0pc,-0.125pc>\ar@{-}[2,0]\\
&&&\ar@{-}[3,0]&&&\\
&&&&&&*+[o]+<0.40pc>[F]{f}\ar@{-}[2,0]\\
&&&&&&\\
&&&\ar@{-}`d/4pt [1,1]+<0.2pc,0pc> `[0,3] [0,3]&&&\\
&&&&\save[]+<0.2pc,0pc> \Drop{}\ar@{-}[1,0]+<0.2pc,0pc>\restore &&\\
&&&&&& }}
\grow{\xymatrix@!0{
\\\\\\\\\\\\\\\\\\
\save\go+<0pt,0pt>\Drop{\txt{=}}\restore }}
\grow{ \xymatrix@!0{ & \save\go+<0pt,3pt>\Drop{C}\restore \ar@{-}[1,0]&&&&
\save\go+<0pt,3pt>\Drop{A}\restore
    \ar@{-}[4,0]\\
&\ar@{-}`l/4pt [1,-1] [1,-1] \ar@{-}`r [1,1] [1,1]&&&&\\
\ar@{-}[16,0]&&\ar@{-}[1,0]&&&\\
&&\ar@{-}`l/4pt [1,-1] [1,-1] \ar@{-}`r [1,1] [1,1]&&&\\
&\ar@{-}[2,0]&&\ar@{-}[1,1]+<-0.125pc,0.0pc> \ar@{-}[1,1]+<0.0pc,0.125pc>&&\ar@{-}[2,-2]\\
&&&&&\\
&\ar@{-}[1,1]+<-0.125pc,0.0pc> \ar@{-}[1,1]+<0.0pc,0.125pc>&&\ar@{-}[2,-2]&&
   \ar@{-}[-1,-1]+<0.125pc,0.0pc>\ar@{-}[-1,-1]+<0.0pc,-0.125pc>\ar@{-}[2,0]\\
&&&&&\\
&\ar@{-}[8,0]&&\ar@{-}[-1,-1]+<0.125pc,0.0pc>\ar@{-}[-1,-1]+<0.0pc,-0.125pc>
   \ar@{-}[1,1]+<-0.1pc,0.1pc> && \ar@{-}[2,-2]\\
&&&&&\\
&&&\ar@{-}[2,0]&&\ar@{-}[2,0]\ar@{-}[-1,-1]+<0.1pc,-0.1pc>\\
&&&&&\\
&&&*+[o]+<0.40pc>[F]{g}\ar@{-}[2,0]&&*+[o]+<0.40pc>[F]{f}\ar@{-}[2,0]\\
&&&&&\\
&&&\ar@{-}`d/4pt [1,1] `[0,2] [0,2]&&\\
&&&&\ar@{-}[1,0]&\\
&\ar@{-}`d/4pt [1,1]+<0.2pc,0pc> `[0,3] [0,3]&&&&\\
&&\save[]+<0.2pc,0pc> \Drop{}\ar@{-}[1,0]+<0.2pc,0pc>\restore&&&\\
&&&&& }}
\grow{\xymatrix@!0{
\\\\\\\\\\\\\\\\\\
\save\go+<0pt,0pt>\Drop{\txt{=}}\restore }}
\grow{ \xymatrix@!0{
\\
& \save\go+<0pt,3pt>\Drop{C}\restore \ar@{-}[1,0]&&& \save\go+<0pt,3pt>\Drop{A}\restore
    \ar@{-}[2,0]&\\
&\ar@{-}`l/4pt [1,-1] [1,-1] \ar@{-}`r [1,1] [1,1]&&&&\\
\ar@{-}[14,0]&&\ar@{-}[1,1]+<-0.125pc,0.0pc> \ar@{-}[1,1]+<0.0pc,0.125pc>&&\ar@{-}[2,-2]&\\
&&&&&\\
&&\ar@{-}[1,-1]&&\ar@{-}[-1,-1]+<0.125pc,0.0pc>\ar@{-}[-1,-1]+<0.0pc,-0.125pc>\ar@{-}[1,0]&\\
&\ar@{-}[9,0]&&&\ar@{-}`l/4pt [1,-1] [1,-1] \ar@{-}`r [1,1] [1,1]&\\
&&&\ar@{-}[1,1]+<-0.1pc,0.1pc> && \ar@{-}[2,-2]\\
&&&&&\\
&&&\ar@{-}[2,0]&&\ar@{-}[-1,-1]+<0.1pc,-0.1pc>\ar@{-}[2,0]\\
&&&&&\\
&&&*+[o]+<0.40pc>[F]{g}\ar@{-}[2,0]&&*+[o]+<0.40pc>[F]{f}\ar@{-}[2,0]\\
&&&&&\\
&&&\ar@{-}`d/4pt [1,1] `[0,2] [0,2]&&\\
&&&&\ar@{-}[1,0]&\\
&\ar@{-}`d/4pt [1,1]+<0.2pc,0pc> `[0,3] [0,3] &&&&\\
&&\save[]+<0.2pc,0pc> \Drop{}\ar@{-}[1,0]+<0.2pc,0pc>\restore&&&\\
&&&&& }}
\grow{\xymatrix@!0{
\\\\\\\\\\\\\\\\\\
\save\go+<0pt,0pt>\Drop{\txt{=}}\restore }}
\grow{ \xymatrix@!0{
\\\\
& \save\go+<0pt,3pt>\Drop{C}\restore \ar@{-}[1,0]&&& \save\go+<0pt,3pt>\Drop{A}\restore
    \ar@{-}[2,0]&\\
&\ar@{-}`l/4pt [1,-1] [1,-1] \ar@{-}`r [1,1] [1,1]&&&&\\
\ar@{-}[11,0]&&\ar@{-}[1,1]+<-0.125pc,0.0pc> \ar@{-}[1,1]+<0.0pc,0.125pc>&&\ar@{-}[2,-2]&\\
&&&&&\\
&&\ar@{-}[1,-1]&&\ar@{-}[-1,-1]+<0.125pc,0.0pc>\ar@{-}[-1,-1]+<0.0pc,-0.125pc>\ar@{-}[1,0]&\\
&\ar@{-}[6,0]&&&\ar@{-}`l/4pt [1,-1] [1,-1] \ar@{-}`r [1,1] [1,1]&\\
&&&&&\\
&&&*+[o]+<0.40pc>[F]{f}\ar@{-}[2,0]&&*+[o]+<0.40pc>[F]{g}\ar@{-}[2,0]\\
&&&&&\\
&&&\ar@{-}`d/4pt [1,1] `[0,2] [0,2]&&\\
&&&&\ar@{-}[1,0]&\\
&\ar@{-}`d/4pt [1,1]+<0.2pc,0pc> `[0,3] [0,3] &&&&\\
&&\save[]+<0.2pc,0pc> \Drop{}\ar@{-}[1,0]+<0.2pc,0pc>\restore&&&\\
&&&&& }}
\grow{\xymatrix@!0{
\\\\\\\\\\\\\\\\\\
\save\go+<0pt,0pt>\Drop{\txt{=}}\restore }}
\grow{ \xymatrix@!0{
\\\\\\\\
& \save\go+<0pt,3pt>\Drop{C}\restore \ar@{-}[1,0]&&& \save\go+<0pt,3pt>\Drop{A}\restore
    \ar@{-}[2,0]\\
&\ar@{-}`l/4pt [1,-1] [1,-1] \ar@{-}`r [1,1] [1,1]&&&\\
\ar@{-}[8,0]&&\ar@{-}[1,1]+<-0.125pc,0.0pc> \ar@{-}[1,1]+<0.0pc,0.125pc>&&\ar@{-}[2,-2]\\
&&&&\\
&&\ar@{-}[4,0]&&\ar@{-}[-1,-1]+<0.125pc,0.0pc>\ar@{-}[-1,-1]+<0.0pc,-0.125pc>\ar@{-}[1,0]\\
&&&&\ar@{-}[1,0]+<0pt,2.5pt>\\
&&&&\save\go+<0pt,1.6pt>\Drop{\circ}\restore \save\go+<0pt,-1.6pt>\Drop{\circ}\restore\\
&&&&\ar@{-}[-1,0]+<0pt,-2.5pt> \ar@{-}[1,0]\\
&&\ar@{-}`d/4pt [1,1] `[0,2] [0,2] &&\\
&&&\ar@{-}[1,0]&\\
&&&& }}
\grow{\xymatrix@!0{
\\\\\\\\\\\\\\\\\\
\save\go+<0pt,0pt>\Drop{\txt{= $\ide_{C\ot A}$,}}\restore }}
$$
where the first equality follows from the fact that $f$ is $\psi$-central, the second one from the
compatibility of $\psi$ with $\mu_A$, the third one from the coassociativity of $\Delta_C$, the associativity of
$\mu_A$ and the fact that $g$ is compatible with $\psi$, the fourth one from the compatibility of $\psi$ with
$\Delta_C$, the fifth one from the cocommutativity of $(C,\varsigma)$ and the sixth one from the fact that $g$ is
the convolution inverse of $f$.
\end{proof}

Let $(C,\varsigma,A,\psi)$ be an entwining structure. We let $\Reg^{\psi}(C,A)$ denote the group of units of
$\Hom_k^{\psi}(C,A)$. Note that if $(C,\varsigma)$ is cocommutative, then, By remark in Notation~\ref{no1.19
conseq de ser psi-central y comp with psi} and Theorem~\ref{th1.21}, $\Reg^{\psi}(C,A)$ is the abelian group made
out of all the convolution invertible elements $f\in \Hom_k^{\psi}(C,A)$.

\smallskip

Let $H$ be a braided bialgebra and let $(C,\varsigma,A,\psi)$ be as above. Assume that we have a
left $H$-braided module coalgebra structure $(C,s_C)$ on $C$ and a left $H$-braided module algebra
structure $(A,s_A)$ on $A$. Let $\Hom_H^{\psi}((C,s_C),(A,s_A))$ be the set of all the elements
$f\in \Hom_k^{\psi}(C,A)$ that are $H$-linear maps and satisfy $s_A\xcirc (H\ot f) = (f\ot
H)\xcirc s_C$. It is easy to see that $\Hom_H^{\psi}((C,s_C),(A,s_A))$ is a subalgebra of
$\Hom_k^{\psi}(C,A)$. We define $\Reg^{\psi}_H((C,s_C),(A,s_A))$ as the group of units of
$\Hom_H^{\psi}((C,s_C),(A,s_A))$.

\smallskip

It is immediate that $f\in \Hom_H^{\psi}((C,s_C),(A,s_A))$ if and only if
$$
s_{C\ot A} \xcirc T^C_A(f) = T^C_A(f) \xcirc s_{C\ot A}
$$
and $T^C_A(f)$ is $H$-linear, where $C\ot A$  is considered as a left $H$-module via the diagonal
action. From this it follows that if $f\in \Hom_H^{\psi}((C,s_C),(A,s_A))$ is convolution
invertible, then $f^{-1}\in \Hom_H^{\psi}((C,s_C), (A,s_A))$. So $\Reg^{\psi}_H((C,s_C),(A,s_A))$
is the abelian group made out of all elements in $\Hom_H^{\psi}((C,s_C),(A,s_A))$, which are the
convolution invertible.

\smallskip

Next we consider the entwining structure $(H^n,c^1_n,A,s^n)$ introduced in Examples~\ref{ex1.11}
and \ref{ex1.13 entwi struc}.

\begin{proposition}\label{pr1.22} Assume that $H$ is a cocommutative braided bialgebra. Then
$\Reg^{s^n}_H((H^n,c^1_n),(A,s))$ is the commutative group of the convolution invertible $H$-linear maps $f\colon
H^n\to A$ satisfying:

\begin{enumerate}

\smallskip

\item $s \xcirc (H\ot f) = (f\ot H)\xcirc c^1_n$,

\smallskip

\item $\mu_A\xcirc (f\ot A) = \mu_A\xcirc (A\ot f)\xcirc s^n$.

\end{enumerate}

\end{proposition}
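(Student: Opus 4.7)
The plan is to unwind the definitions so that the set described by the proposition coincides with $\Reg^{s^n}_H((H^n,c^1_n),(A,s))$, and then to invoke the machinery built before the statement. First, by Remark~\ref{re1.15}, condition~(1) is precisely the compatibility of $f$ with $s^n$ in the sense of Definition~\ref{de1.14 compat with psi}, applied to the entwining structure $(H^n,c^n_n,A,s^n)$ of Example~\ref{ex1.13 entwi struc}; and condition~(2) is, by Definition~\ref{de1.16 psi-central}, exactly the $s^n$-centrality of $f$. Hence the set of $H$-linear maps satisfying (1) and (2) coincides, as a subalgebra of $\Hom_k(H^n,A)$, with $\Hom_H^{s^n}((H^n,c^1_n),(A,s))$.

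Second, the identification of $\Reg^{s^n}_H((H^n,c^1_n),(A,s))$ as the set of convolution invertible elements of that subalgebra follows from Theorem~\ref{th1.21} (which supplies preservation of $s^n$-compatibility and $s^n$-centrality under convolution inverse) together with the $T^{H^n}_A$-criterion stated just before the proposition: the conditions $s_{H^n\ot A}\xcirc T^{H^n}_A(f) = T^{H^n}_A(f)\xcirc s_{H^n\ot A}$ and $H$-linearity of $T^{H^n}_A(f)$ pass to the inverse endomorphism $T^{H^n}_A(f)^{-1}$ in $\End_A^{H^n}(H^n\ot A)$, which by Lemma~\ref{le1.20} corresponds to $f^{-1}$.

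Third, commutativity will follow from the remark in Notation~\ref{no1.19 conseq de ser psi-central y comp with psi}, provided that $(H^n,c^n_n)$ is a cocommutative braided coalgebra, that is, $c^n_n\xcirc\Delta_{H^n}=\Delta_{H^n}$.

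The main obstacle is this last cocommutativity check; everything else is essentially bookkeeping. I propose to verify it by induction on $n$. The base case $n=1$ is the standing hypothesis $c\xcirc\Delta_H=\Delta_H$. For the inductive step, using the recursions $\Delta_{H^n}=(H\ot\scc_{n-1}\ot H)\xcirc(\Delta_H\ot\cdots\ot\Delta_H)$ and $c^n_n=(c_n^{n-1}\ot H)\xcirc(H^{n-1}\ot c^1_n)$, the compatibility of $c$ with $\Delta_H$ is applied to push each elementary crossing composing $c^n_n$ across the neighbouring coproduct, after which the cocommutativity hypothesis $c\xcirc\Delta_H=\Delta_H$ absorbs it. Graphically, each elementary crossing in $c^n_n$ hits two strands emerging from coproducts, where it can be cancelled; the order in which the crossings are cancelled is dictated by peeling off the outermost $c^1_n$ factor, which reduces the calculation to the inductive hypothesis for $n-1$.
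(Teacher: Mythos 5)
Your first two steps follow exactly the route the paper takes: its proof of Proposition~\ref{pr1.22} consists precisely of invoking Remark~\ref{re1.15} together with the comments preceding the statement (the remark in Notation~\ref{no1.19 conseq de ser psi-central y comp with psi}, Theorem~\ref{th1.21}, and the $T^C_A$-criterion for membership in $\Hom^{\psi}_H$ and its stability under convolution inverses), so there is nothing to add there. The one point where you go beyond the paper is the verification that $(H^n,c^n_n)$ is cocommutative, which the paper leaves implicit, and there your sketch is not quite right as stated: after the recursions are unwound, the elementary crossings composing $c^n_n$ do \emph{not} each sit on the two legs of a single coproduct (a typical one crosses the second leg of $\De(h^i)$ with the first leg of $\De(h^j)$ for $i\ne j$), so they cannot all be absorbed by $c\xcirc\De_H=\De_H$ alone. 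What rescues the computation is the involutivity of $c$, which is part of the paper's notion of cocommutativity (Definition~\ref{de1.12 braided coalg}) and is used elsewhere (e.g.\ in the proof of Lemma~\ref{le2.1}): some crossings of $c^n_n$ must be cancelled against the crossings already present in the shuffle $\scc_{n-1}$ occurring in $\De_{H^n}$. For $n=2$, writing $c_i$ for the crossing of the $i$-th and $(i+1)$-th tensorands, one has $\De_{H^2}=c_2\xcirc(\De\ot\De)$ and $c^2_2=c_2\xcirc c_1\xcirc c_3\xcirc c_2$, so $c^2_2\xcirc\De_{H^2}=c_2\xcirc c_1\xcirc c_3\xcirc(\De\ot\De)=c_2\xcirc(\De\ot\De)=\De_{H^2}$, where the first simplification uses $c\xcirc c=\ide$ and the remaining ones use $c\xcirc\De=\De$. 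In general, since $c$ is involutive and satisfies the braid equation, the crossings define an action of the symmetric group on $H^{2n}$; under this action the block permutation underlying $c^n_n$ composed with the shuffle underlying $\De_{H^n}$ equals that shuffle composed with the product of the disjoint transpositions interchanging the two legs of each $\De_H$, and these are exactly the crossings killed by cocommutativity. With this correction (or with your induction repaired so that involutivity is used to cancel crossings against $\scc_{n-1}$), your argument is complete and coincides with the paper's.
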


\begin{proof} It follows immediately from the above comments and Remark~\ref{re1.15}.
\end{proof}

Since $c^1_n$ and $s^n$ are constructed from the braid of $H$ and $s$ respectively, we will write
$\Reg^s_H(H^n,A)$ instead of $\Reg^{s^n}_H((H^n,c^1_n),(A,s))$ and $\Hom^s_H(H^n,A)$ instead of
$\Hom^{s^n}_H((H^n,c^1_n),(A,s))$. Moreover, we let $\Hom_k^s(H^n,A)$ and $\Reg^s(H^n,A)$ denote the algebra of
$k$-linear maps from $H^n$ to $A$ satisfying conditions $(1)$ and $(2)$ of Proposition~\ref{pr1.22} and its group
of units, respectively. It is easy to see that $\Hom_k^s(H^n,A) = \Hom_k^{s^n}(H^n,A)$ and of course
$\Reg^s(H^n,A) = \Reg^{s^n}(H^n,A)$.

\section{The braided Sweedler cohomology}
In this section $H$ will denote a cocommutative braided Hopf algebra. Let $\mathcal{B}(H)$ be the
category whose objects are the left $H$-module coalgebras $(H^n,c^1_n)$ with $n\ge 1$, and whose
arrows are the maps of left $H$-module coalgebras
$$
f\colon (H^n,c^1_n)\to (H^m,c^1_m)
$$
such that $s^m\xcirc (f\ot A) = (A\ot f)\xcirc s^n$, for each left $H$-module algebra $(A,s)$. We have a
simplicial complex in $\mathcal{B}(H)$ with objects $\{(H^{n+1},c^1_{n+1})\}_{n\ge 0}$ and face and degeneracy
operators
$$
\partial_i\colon (H^{n+1},c^1_{n+1})\to (H^n,c^1_n)\quad\text{and}\quad s_i\colon (H^{n+1},c^1_{n+1})\to
(H^{n+2},c^1_{n+2})
$$
given by
\begin{align*}
&\partial_i(h_0\ot\cdots\ot h_n) = h_0\ot\cdots\ot h_ih_{i+1} \ot\cdots \ot h_n\quad \text{for $i=0,\dots,n-1$,}\\
&\partial_n(h_0\ot\cdots\ot h_n) = h_0\ot\cdots\ot h_{n-1}\ep(h_n)\\
\intertext{and}
&s_i(h_0\ot\cdots\ot h_n) = h_0\ot\cdots\ot h_i\ot 1\ot h_{i+1}\ot\cdots\ot h_n\quad\text{for
$i=0,\dots,n$.}
\end{align*}
Let $(A,s)$ be a left $H$-module algebra. Let $\mathcal{A}b$ be the category of abelian groups. It
is easy to see that $\Reg^s_H(-,A)\colon \mathcal{B}(H)\to \mathcal{A}b$ is functorial. Applying
this functor to the above simplicial complex, we obtain a cosimplicial complex. Following
\cite{Sw} we let $\partial^i$ and $s^i$ \mbox{($0\le i\le n$)} denote the coface operators
$$
\Reg^s_H(\partial_i,A)\colon \Reg^s_H(H^n,A)\to \Reg^s_H(H^{n+1},A)
$$
and the codegenerations
$$
\Reg^s_H(s_i,A)\colon \Reg^s_H(H^{n+2},A)\to \Reg^s_H(H^{n+1},A),
$$
respectively. Let $d^{n-1}\colon \Reg^s_H(H^n,A)\to \Reg^s_H(H^{n+1},A)$ be the map
$$
d^{n-1} = \partial^0 * (\partial^1)^{-1}*\cdots* (\partial^n)^{\pm 1}.
$$
The cochain complex
$$
\xymatrix{\Reg^s_H(H,A) \rto^(0.48){d^0} & \Reg^s_H(H^{2},A) \rto^(0.5){d^1} & \Reg^s_H(H^{3},A)\rto^(0.675){d^2}
&\cdots},
$$
associated with the above cosimplicial complex, is called the {\em braided Sweedler cochain complex} of $(A,s)$.
The {\em braided Sweedler cohomology} $\Ho^*(H,A,s)$, of $H$ in $(A,s)$, is defined to be the cohomology of this
complex. Let $N_s^n$ be the subgroup of $\Reg^s_H(H^{n+1},A)$ defined by
$$
N_s^n:= \ker(s^0)\cap \dots\cap \ker(s^{n-1}).
$$
Note that $N_s^0= \Reg^s_H(H,A)$.  By a well-known general result about cosimplicial complexes, $(N^*,d^*_{|N^*})$
is a subcomplex of $\bigl(\Reg^s_H (H^{*+1},A),d^*\bigr)$ (which we call the {\em braided Sweedler normalized
cochain complex} of $(A,s)$) and the inclusion map, from $(N^*,d^*_{|N^*})$ to $\bigl(\Reg^s_H
(H^{*+1},A),d^*\bigr)$, is a quasi-isomorphism.

\smallskip

Let $i_n\colon\Hom_H(H^{n+1},A)\to \Hom_k(H^n,A)$ be the algebra isomorphism induced by the map $x\mapsto 1\ot x$
from $H^n$ to $H^{n+1}$.

\begin{lemma}\label{le2.1}
The map $i_n$ induce an abelian group isomorphism
$$
\iota_n\colon \Reg^s_H(H^{n+1},A) \to \Reg^s(H^n,A).
$$
\end{lemma}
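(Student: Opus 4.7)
The plan is to exhibit an explicit inverse to $i_n$ and then check that both $i_n$ and its inverse respect the two extra conditions of Proposition~\ref{pr1.22}. Since $H^{n+1} = H \otimes H^n$ is the free left $H$-module on $\{1\} \otimes H^n$, the restriction map $i_n(f) := f \xcirc (\eta \otimes H^n)$ is a $k$-linear bijection $\Hom_H(H^{n+1},A) \to \Hom_k(H^n,A)$, with inverse $j_n(g) := \rho_A \xcirc (H \otimes g)$, i.e.\ $j_n(g)(h \otimes x) = h \cdot g(x)$. To see that $i_n$ is an algebra isomorphism with respect to the convolution products, I would use the definition of $\De_{H^{n+1}}$ from Example~\ref{ex1.11}, together with $\De_H(1) = 1 \otimes 1$ and the compatibility $c \xcirc (H \otimes \eta) = \eta \otimes H$, to obtain
\[
\De_{H^{n+1}}(1 \otimes x) = (1 \otimes x_{(1)}) \otimes (1 \otimes x_{(2)}),
\]
where $\De_{H^n}(x) = x_{(1)} \otimes x_{(2)}$; from this the multiplicativity of $i_n$ is immediate, and so is that of $j_n$.

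Next I would show that $i_n$ restricts to a well-defined map $\iota_n \colon \Reg^s_H(H^{n+1},A) \to \Reg^s(H^n,A)$. Convolution invertibility is automatic from Step~1, so I only need to check conditions (1) and (2) of Proposition~\ref{pr1.22}. For the $s$-compatibility of $i_n(f)$, the key identity is $c^1_{n+1}(h \otimes 1 \otimes x) = 1 \otimes c^1_n(h \otimes x)$, which follows from the recursive definition of $c^1_{n+1}$ in Notations~\ref{no1.9} together with $c(h \otimes 1) = 1 \otimes h$; combining this with $s \xcirc (H \otimes f) = (f \otimes H) \xcirc c^1_{n+1}$ gives the $s$-compatibility of $i_n(f)$. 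For centrality, the analogous identity $s^{n+1} \xcirc (\eta \otimes H^n \otimes A) = (A \otimes \eta \otimes H^n) \xcirc s^n$ (which follows from $s \xcirc (\eta \otimes A) = A \otimes \eta$, a consequence of $s$ being a transposition) lets me deduce the $s^n$-centrality of $i_n(f)$ from the $s^{n+1}$-centrality of $f$.

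Conversely, I would verify that $j_n$ sends $\Reg^s(H^n, A)$ into $\Reg^s_H(H^{n+1}, A)$. The map $j_n(g) = \rho_A \xcirc (H \otimes g)$ is $H$-linear by construction. For the $s$-compatibility, I would compute
\[
s \xcirc (H \otimes j_n(g)) = s \xcirc (H \otimes \rho_A) \xcirc (H^2 \otimes g)
\]
and apply condition~(3) of Remark~\ref{re1.6 car de H-braid mod alg} to transform this into $(\rho_A \otimes H) \xcirc (H \otimes s) \xcirc (c \otimes A) \xcirc (H^2 \otimes g)$; then using the $s$-compatibility of $g$ and the recursion $c^1_{n+1} = (H \otimes c^1_n) \xcirc (c \otimes H^n)$ recovers $(j_n(g) \otimes H) \xcirc c^1_{n+1}$. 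The $s^{n+1}$-centrality of $j_n(g)$ is obtained similarly: unravel $j_n(g) \otimes A = (\rho_A \otimes A) \xcirc (H \otimes g \otimes A)$, pass $\mu_A$ through $\rho_A$ using the module algebra axiom (condition (4) of Remark~\ref{re1.6 car de H-braid mod alg}), and apply the $s^n$-centrality of $g$.

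The main obstacle is the bookkeeping in the centrality verification for $j_n(g)$, where one must interchange $\mu_A$ with $\rho_A$ using the module-algebra axiom of Remark~\ref{re1.6 car de H-braid mod alg}(4) and then identify the resulting composition with $\mu_A \xcirc (A \otimes j_n(g)) \xcirc s^{n+1}$. This is not difficult conceptually, but it is cleanest to carry out diagrammatically using the graphical calculus introduced in Section~1. Once both directions are established, $i_n$ and $j_n$ restrict to inverse group homomorphisms between $\Reg^s_H(H^{n+1},A)$ and $\Reg^s(H^n,A)$, both of which are abelian by Proposition~\ref{pr1.22} and the cocommutativity of $H$.
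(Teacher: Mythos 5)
Your overall strategy is the same as the paper's: restrict along $x\mapsto 1\ot x$ and extend back by $H$-linearity via $j_n(g)=\rho_A\xcirc(H\ot g)$. The routine parts of your argument are fine and are also treated as easy in the paper: that $i_n$ and $j_n$ are mutually inverse algebra maps for the convolution products, the transfer of compatibility with $s$ in both directions (using $c^1_{n+1}=(H\ot c^1_n)\xcirc(c\ot H^n)$ together with $c(h\ot 1)=1\ot h$ and $s(1\ot a)=a\ot 1$), and the implication from $s^{n+1}$-centrality of $f$ to $s^n$-centrality of $i_n(f)$.

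The gap is in the one step that carries the whole content of the lemma: showing that $j_n(g)$ is $s^{n+1}$-central when $g$ is $s^n$-central. The module-algebra axiom of Remark~\ref{re1.6 car de H-braid mod alg}(4) only rewrites $\rho\xcirc(H\ot\mu_A)$, i.e.\ it expands an action on a product, $h\cdot(ab)$; it gives no way to rewrite what you actually have, $\mu_A\xcirc(j_n(g)\ot A)$, i.e.\ a product $(h\cdot g(x))\,a$ in which the action sits on only one factor. To bring the centrality of $g$ into play you must first insert the antipode via $h=h_{(1)}\ep(h_{(2)})$ and $\ep=\mu_H\xcirc(H\ot S)\xcirc\De$, so that $(h\cdot g(x))\,a$ becomes an action of $h_{(1)}$ on a product involving $S(h_{(2)})\cdot a$, then apply Remark~\ref{re1.6 car de H-braid mod alg}(4) and the hypothesis on $g$, and finally collapse the antipode again; in the braided setting this collapse uses exactly the ingredients your sketch never invokes: the cocommutativity of $H$ in the form $(H\ot S)\xcirc\De\xcirc S=(S\ot H)\xcirc\De$, the compatibility of $S$ with $c$ and with $s$, and the involutivity of $c$. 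This is precisely what the paper's long diagrammatic computation in the proof of Lemma~\ref{le2.1} does, and it is not mere bookkeeping: already in the classical case (trivial $s$ and $c$) the statement amounts to the fact that a cocommutative Hopf action preserves centrality, which fails for a general bialgebra action and genuinely needs the antipode and cocommutativity. So, while the surrounding structure of your proof is sound, the decisive step is asserted rather than proved, and the tools you cite for it are insufficient to carry it out.
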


\begin{proof} Let $f\in \Hom_H(H^{n+1},A)$. It suffices to show that
\begin{align*}
& s \xcirc (H\ot f) = (f\ot H)\xcirc c^1_{n+1} \Leftrightarrow s \xcirc (H\ot i(f)) = (i(f)\ot H)\xcirc c^1_n,\\
&\mu_A\xcirc (f\ot A) = \mu_A\xcirc (A\ot f)\xcirc s^{n+1} \Leftrightarrow \mu_A\xcirc (i(f)\ot A) = \mu_A\xcirc
(A\ot i(f))\xcirc s^n.
\end{align*}
It is easy to check the first assertion and that
$$
\mu_A\xcirc(f\ot A) = \mu_A\xcirc (A\ot f)\xcirc s^{n+1}\Rightarrow \mu_A\xcirc (i(f)\ot A) = \mu_A\xcirc (A\ot
i(f))\xcirc s^n.
$$
Assume that $\mu_A\xcirc (i(f)\ot A) = \mu_A\xcirc (A\ot i(f))\xcirc s^n$ and write $C=H^n$, $C'=H^{n+1}$ and
$g=i(f)$. We have:
\begin{equation*}
\spreaddiagramcolumns{-1.6pc}\spreaddiagramrows{-1.6pc}
\objectmargin{0.0pc}\objectwidth{0.0pc}
\def\objectstyle{\sssize}
\def\labelstyle{\sssize}
\grow{\xymatrix@!0{
\\\\\\\\\\\\\\
\save\go+<1pt,3pt>\Drop{C'}\restore
\save[]+<0pc,0pc> \Drop{} \ar@{-}[2,0]\restore && \save\go+<0pt,3pt>\Drop{A}\restore
    \ar@{-}[4,0]\\
&&\\
*+[o]+<0.40pc>[F]{f}\ar@{-}[2,0]\\
&&\\
\ar@{-}`d/4pt [1,1] `[0,2] [0,2]&&\\
&\ar@{-}[1,0]&\\
&&
}}
 \grow{\xymatrix@!0{
 \\\\\\\\\\\\\\\\\\
\save\go+<0pt,0pt>\Drop{\txt{=}}\restore
}}
\grow{\xymatrix@!0{
\\\\\\\\\\\\
\save\go+<0pt,3pt>\Drop{H}\restore \ar@{-}[4,0]&&\save\go+<0pt,3pt>\Drop{C}\restore
   \save[]+<0pc,0pc> \Drop{} \ar@{-}[2,0]\restore&&
   \save\go+<0pt,3pt>\Drop{A}\restore\ar@{-}[6,0]\\
&&&&\\
&&*+[o]+<0.40pc>[F]{g}\ar@{-}[4,0]&&\\
&&&&\\
\ar@{-}`d/4pt [1,2][1,2]&&&&\\
&&&&\\
&&\ar@{-}`d/4pt [1,1] `[0,2] [0,2]&&\\
&&&\ar@{-}[1,0]&\\
&&&&
}}
 \grow{\xymatrix@!0{
\\\\\\\\\\\\\\\\\\
\save\go+<0pt,0pt>\Drop{\txt{=}}\restore
}}
\grow{\xymatrix@!0{
\\\\\\
&\save\go+<0pt,3pt>\Drop{H}\restore \ar@{-}[1,0]&&&\save\go+<0pt,3pt>\Drop{C}\restore
   \save[]+<0pc,0pc> \Drop{} \ar@{-}[2,0]\restore&&
   \save\go+<0pt,3pt>\Drop{A}\restore\ar@{-}[10,0]\\
& \ar@{-}`l/4pt [1,-1] [1,-1] \ar@{-}`r [1,1] [1,1]&&&&&&\\
\ar@{-}[9,0]&&\ar@{-}[1,1]+<-0.1pc,0.1pc> && \ar@{-}[2,-2]&&&\\
&&&&&&&\\
&&\ar@{-}[2,0]&&\ar@{-}[-1,-1]+<0.1pc,-0.1pc>\ar@{-}[2,0]&&&\\
&&&&&&&\\
&&*+[o]+<0.40pc>[F]{g}\ar@{-}[4,0]&&*+[o]+<0.40pc>[F]{S}\ar@{-}[2,0]&&&\\
&&&&&&&\\
&&&&\ar@{-}`d/4pt [1,2][1,2] &&&\\
&&&&&&&\\
&&\ar@{-}`d/4pt [1,2] `[0,4] [0,4]&&&&&\\
\ar@{-}`d/4pt [1,2][1,2]&&&&\ar@{-}[2,0]&&&\\
&&\ar@{-}[0,2]&&&&&\\
&&&&&&&
}}
 \grow{\xymatrix@!0{
 \\\\\\\\\\\\\\\\\\
\save\go+<0pt,0pt>\Drop{\txt{=}}\restore
}}
\grow{\xymatrix@!0{
\\
&\save\go+<0pt,3pt>\Drop{H}\restore \ar@{-}[1,0]&&&\save\go+<0pt,3pt>\Drop{C}\restore
   \save[]+<0pc,0pc> \Drop{} \ar@{-}[2,0]\restore&&
   \save\go+<0pt,3pt>\Drop{A}\restore\ar@{-}[10,0]\\
& \ar@{-}`l/4pt [1,-1] [1,-1] \ar@{-}`r [1,1] [1,1]&&&&&\\
\ar@{-}[12,0]&&\ar@{-}[1,1]+<-0.1pc,0.1pc> && \ar@{-}[2,-2]&&\\
&&&&&&\\
&&\ar@{-}[3,0]&&\ar@{-}[-1,-1]+<0.1pc,-0.1pc>\ar@{-}[2,0]&&\\
&&&&&&\\
&&&&*+[o]+<0.40pc>[F]{S}\ar@{-}[2,0]&&\\
&&\ar@{-}[3,2]&&&&\\
&&&&\ar@{-}`d/4pt [1,2][1,2]&&\\
&&&&&&\\
&&&&\ar@{-}[1,1]+<-0.125pc,0.0pc> \ar@{-}[1,1]+<0.0pc,0.125pc>&&\ar@{-}[2,-2]\\
&&&&&&\\
&&&&\ar@{-}[4,0]&&\ar@{-}[-1,-1]+<0.125pc,0.0pc>\ar@{-}[-1,-1]+<0.0pc,-0.125pc>\ar@{-}[2,0]\\
&&&&&&\\
\ar@{-}[3,3]&&&&&&*+[o]+<0.40pc>[F]{g}\ar@{-}[2,0]\\
&&&&&&\\
&&&&\ar@{-}`d/4pt [1,1] `[0,2] [0,2]&&\\
&&&\ar@{-}`d/4pt [1,2][1,2]&&\ar@{-}[2,0]&\\
&&&&&&\\
&&&&&&
}}
%
\grow{\xymatrix@!0{
 \\\\\\\\\\\\\\\\\\
\save\go+<0pt,0pt>\Drop{\txt{=}}\restore
}}
\grow{\xymatrix@!0{
\\\\\\\\\\
&\save\go+<0pt,3pt>\Drop{H}\restore \ar@{-}[1,0]&&&\save\go+<0pt,3pt>\Drop{C}\restore
    \ar@{-}[2,0]\restore&&
   \save\go+<0pt,3pt>\Drop{A}\restore\ar@{-}[2,0]\\
&\ar@{-}`l/4pt [1,-1] [1,-1] \ar@{-}`r [1,1] [1,1]&&&&&\\
\ar@{-}[4,0]&&\ar@{-}[2,0]&&\ar@{-}[1,1]+<-0.125pc,0.0pc> \ar@{-}[1,1]+<0.0pc,0.125pc>&&
    \ar@{-}[2,-2]\\
&&&&&&\\
&&*+[o]+<0.40pc>[F]{S}\ar@{-}[2,0]&&\ar@{-}[4,0]&&
    \ar@{-}[-1,-1]+<0.125pc,0.0pc>\ar@{-}[-1,-1]+<0.0pc,-0.125pc>\ar@{-}[2,0]\\
&&&&&&\\
\ar@{-}[3,3]&&\ar@{-}`d/4pt [1,2][1,2] &&&&*+[o]+<0.40pc>[F]{g}\ar@{-}[2,0]\\
&&&&&&\\
&&&&\ar@{-}`d/4pt [1,1] `[0,2] [0,2] &&\\
&&&\ar@{-}`d/4pt [1,2][1,2]&&\ar@{-}[2,0]&\\
&&&&&&\\
&&&&&&
}}
\grow{\xymatrix@!0{
\\\\\\\\\\\\\\\\\\
\save\go+<0pt,0pt>\Drop{\txt{=}}\restore
}}
\grow{\xymatrix@!0{
&\save\go+<2pt,3pt>\Drop{H}\restore\save[]+<0.2pc,0pc>
           \Drop{}\ar@{-}[1,0]+<0.2pc,0pc>\restore&&&&&\save\go+<0pt,3pt>\Drop{C}\restore
           \ar@{-}[1,1]+<-0.125pc,0.0pc> \ar@{-}[1,1]+<0.0pc,0.125pc>&&\ar@{-}[2,-2]
           \save\go+<0pt,3pt>\Drop{A}\restore\\
&\save[]+<0.2pc,0pc> \Drop{}\ar@{-}`l/4pt [1,-1] [1,-1] \ar@{-}`r [1,2] [1,2]\restore&&&&&&\\
\ar@{-}[11,0]&&&\ar@{-}[2,0]&&&\ar@{-}[5,0]&&\ar@{-}[-1,-1]+<0.125pc,0.0pc>
    \ar@{-}[-1,-1]+<0.0pc,-0.125pc>\ar@{-}[2,0]\\
&&&&&&&&\\
&&&*+[o]+<0.40pc>[F]{S}\ar@{-}[2,0]&&&&&*+[o]+<0.40pc>[F]{g}\ar@{-}[11,0]\\
&&&&&&&&\\
&&&\ar@{-}`l/4pt [1,-1] [1,-1] \ar@{-}`r [1,1] [1,1]&&&&&\\
\ar@{-}[2,0]&&\ar@{-}[2,0]&&\ar@{-}[1,1]+<-0.125pc,0.0pc> \ar@{-}[1,1]+<0.0pc,0.125pc>
&&\ar@{-}[2,-2]&&\\
&&&&&&&&\\
&&\ar@{-}[5,0]&&\ar@{-}[6,0]&&\ar@{-}[-1,-1]+<0.125pc,0.0pc>\ar@{-}[-1,-1]+<0.0pc,-0.125pc>
   \ar@{-}[2,0]&&\\
&&&&&&&&\\
&&&&&&*+[o]+<0.40pc>[F]{S}\ar@{-}[2,0]&&\\
&&&&&&&&\\
\ar@{-}[6,4]&&&&&&\ar@{-}`d/4pt [1,2][1,2] &&\\
&&\ar@{-}[3,2]&&&&&&\\
&&&&\ar@{-}`d/4pt [1,2] `[0,4] [0,4]&&&&\\
&&&&&&\ar@{-}[5,0]&&\\
&&&&\ar@{-}`d/4pt [1,2][1,2]&&&&\\
&&&&&&&&\\
&&&&\ar@{-}`d/4pt [1,2][1,2]&&&&\\
&&&&&&&&\\
&&&&&&&&
}}
\grow{\xymatrix@!0{
 \\\\\\\\\\\\\\\\\\
\save\go+<0pt,0pt>\Drop{\txt{=}}\restore
}}
\grow{\xymatrix@!0{
\\\\\\\\\\
\save\go+<0pt,3pt>\Drop{H}\restore \ar@{-}[2,0]&&\save\go+<0pt,3pt>\Drop{C}\restore
  \ar@{-}[1,1]+<-0.125pc,0.0pc> \ar@{-}[1,1]+<0.0pc,0.125pc>&&\ar@{-}[2,-2]
   \save\go+<0pt,3pt>\Drop{A}\restore\\
&&&&\\
\ar@{-}[1,1]+<-0.125pc,0.0pc> \ar@{-}[1,1]+<0.0pc,0.125pc>&&\ar@{-}[2,-2]&&
     \ar@{-}[-1,-1]+<0.125pc,0.0pc>\ar@{-}[-1,-1]+<0.0pc,-0.125pc>\ar@{-}[2,0]\\
&&&&\\
\ar@{-}[3,0]&&\ar@{-}[-1,-1]+<0.125pc,0.0pc>\ar@{-}[-1,-1]+<0.0pc,-0.125pc>\ar@{-}[1,0] &&
    *+[o]+<0.40pc>[F]{g}\ar@{-}[3,0]\\
&& \ar@{-}`d/4pt [1,2][1,2]&&\\
&&&&\\
\ar@{-}`d/4pt [1,2] `[0,4] [0,4] &&&&\\
&&\ar@{-}[1,0]&&\\
&&&&
}}
\grow{\xymatrix@!0{
 \\\\\\\\\\\\\\\\\\
\save\go+<0pt,0pt>\Drop{\txt{=}}\restore
}}
\grow{\xymatrix@!0{
\\\\\\\\\\
\save\go+<1pt,3pt>\Drop{C'}\restore
\ar@{-}[1,1]+<-0.125pc,0.0pc> \ar@{-}[1,1]+<0.0pc,0.125pc>&& \save\go+<0pt,3pt>
   \Drop{A}\restore\ar@{-}[2,-2]\\
&&\\
\ar@{-}[4,0]&&\ar@{-}[-1,-1]+<0.125pc,0.0pc>\ar@{-}[-1,-1]+<0.0pc,-0.125pc>\ar@{-}[2,0] \\
&&\\
&&*+[o]+<0.40pc>[F]{f}\ar@{-}[2,0]\\
&&\\
\ar@{-}`d/4pt [1,1] `[0,2] [0,2]&&\\
&\ar@{-}[1,0]&\\
&&
}}
\grow{\xymatrix@!0{
 \\\\\\\\\\\\\\\\\\
\save\go+<0pt,0pt>\Drop{\txt{,}}\restore
}}
\end{equation*}
where the second equality follows from items~(1) and (4) of Remark~\ref{re1.6 car de H-braid mod
alg} and the facts that $\De$ is coassociative and compatible with $c$ and $g$ is compatible with
$s$, the third one from the hypothesis, the fourth one from item~(3) of Remark~\ref{re1.6 car de
H-braid mod alg} and the facts that $S$ is compatible with $c$ and $c$ is involutive, the fifth
one follows from items~(1) and (4) of Remark~\ref{re1.6 car de H-braid mod alg} and the facts that
$\De$ is coassociative and compatible with $c$ and the sixth one follows from the fact that $S$ is
compatible with $s$, that $(H\ot S)\xcirc \De \xcirc S = (S\ot H)\xcirc \De$ (since $H$ is
cocommutative), the coassociativity of $\De$ and item~(1) of Remark~\ref{re1.6 car de H-braid mod
alg}.
\end{proof}

The $\Reg^s(H^n,A)$'s are the objects of a cosimplicial complex with coface operators
$$
\delta^i\colon \Reg^s(H^{n-1},A)\to \Reg^s(H^n,A)\qquad i=0,\dots,n
$$
and codegenerations
$$
\sigma^i\colon \Reg^s(H^{n+1},A)\to \Reg^s(H^n,A)\qquad i=0,\dots,n,
$$
defined by
$$
\delta^i(f)(h_1\ot\cdots\ot h_n) = \begin{cases} h_1\cdot f(h_2\ot\cdots\ot h_n) & \text{if $i = 0$,} \\ f(h_1\ot
\cdots\ot h_ih_{i+1}\ot \cdots\ot h_n) & \text{if $0<i<n$,}\\ f(h_1\ot\cdots\ot h_{n-1})\epsilon(h_n) & \text{if
$i = n$,} \end{cases}
$$
and
$$
\sigma^i(f)(h_1\ot\cdots\ot h_n) = f(h_1\ot\cdots\ot h_i\ot 1 \ot h_{i+1}\ot \cdots\ot h_n),
$$
respectively. Furthermore, the map $\iota_*\colon \Reg^s_H(H^{*+1},A) \to \Reg^s(H^*,A)$ is an isomorphism of
cosimplicial complexes. We let
$$
\xymatrix{\Reg^s(k,A) \rto^(0.48){D^0} & \Reg^s(H,A) \rto^(0.5){D^1} & \Reg^s(H^2,A)\rto^(0.675){D^2} &\cdots}
$$
denote the cochain complex associated with the cosimplicial complex $\Reg^s(H^*,A)$. By definition $D^{n-1} =
\delta^0 * (\delta^1)^{-1}*\cdots* (\delta^n)^{\pm 1}$. So, $(\Reg^s(H^*,A),D^*)$ gives the braided Sweedler
cohomology of $H$ in $(A,s)$. Of course this cohomology can be also compute by the normalized subcomplex
$(\Reg^s_+(H^*,A),D^*)$ of $(\Reg^s(H^*,A),D^*)$.

\smallskip

Let ${}^s\! A := \{a\in A:s(h\ot a) = a\ot h\text{ for all $h\in H$}\}$. It is immediate that ${}^s\! A$ is a
subalgebra of $A$. Notice that the map $f\mapsto f(1)$ is an isomorphism from $\Reg^s(k,A)$ to $({}^s\!A
\cap\Z(A))^{\times}$. Let $a\in {}^s\!A \cap\Z(A)$ be a regular element. By definition
$$
D^0(a)(h)=(h_{(1)}\cdot a)\epsilon(h_{(2)})a^{-1}= (h\cdot a)a^{-1}.
$$
Thus $a\in \Ho^0(H,A,s)$  if and only if $h\cdot a=\epsilon(h)a$, and so
$$
\Ho^0(H,A,s) = ({}^s\!A \cap\Z(A))^{\times}\cap {}^H\! A.
$$
Next, we compute $\Ho^1(H,A,s)$.

\begin{definition}\label{de2.2} A map $f\colon H\to A$ is a {\em crossed homomorphism} if
$$
f\xcirc \mu_H=\mu_A\xcirc (f\ot \rho_A)\xcirc (\Delta \ot f).
$$
A crossed homomorphism $f$ is called {\em inner} if there exists $a\in ({}^s\!A \cap\Z(A))^{\times}$ so that
$f=D^0(a)$.
\end{definition}

Let $f\in \Reg^s(H,A)$. It is easy to check that $f$ is an $1$-cocycle of the complex $(\Reg^s(H^*,A),D^*)$ if and
only if
\begin{equation*}
\spreaddiagramcolumns{-1.6pc}\spreaddiagramrows{-1.6pc}
\objectmargin{0.0pc}\objectwidth{0.0pc}
\def\objectstyle{\sssize}
\def\labelstyle{\sssize}
\grow{\xymatrix@!0{
\\\\\\\\\\
\save\go+<0pt,0pt>\Drop{\txt{$f\circ \mu_H$ =}}\restore }}
\grow{\xymatrix@!0{ &\save\go+<0pt,3pt>\Drop{H}\restore \ar@{-}[1,0]\restore &&&
\save\go+<0pt,3pt>\Drop{H}
   \restore \ar@{-}[2,0]\\
&\ar@{-}`l/4pt [1,-1] [1,-1] \ar@{-}`r [1,1] [1,1]&&&\\
\ar@{-}[5,0]&& \ar@{-}[1,1]+<-0.1pc,0.1pc> && \ar@{-}[2,-2]\\
&&&&\\
&&\ar@{-}[2,0]&&\ar@{-}[-1,-1]+<0.1pc,-0.1pc>\ar@{-}[2,0]\\
&&&&\\
&&*+[o]+<0.40pc>[F]{f}\ar@{-}[3,0]&&*+[o]+<0.40pc>[F]{f}\ar@{-}[3,0]\\
\ar@{-}`d/4pt [1,2][1,2]&&&&\\
&&&&\\
&&\ar@{-}`d/4pt [1,1] `[0,2] [0,2]&&\\
&&&\ar@{-}[1,0]&\\
&&&& }} \grow{\xymatrix@!0{
\\\\\\\\\\
\save\go+<0pt,0pt>\Drop{\txt{.}}\restore }}
\end{equation*}
But, since $H$ is cocommutative and $f$ is $s$-central and compatible with $s$,
\begin{equation*}
\spreaddiagramcolumns{-1.6pc}\spreaddiagramrows{-1.6pc}
\objectmargin{0.0pc}\objectwidth{0.0pc}
\def\objectstyle{\sssize}
\def\labelstyle{\sssize}
\grow{\xymatrix@!0{
\\
&\save\go+<0pt,3pt>\Drop{H}\restore \ar@{-}[1,0]\restore &&& \save\go+<0pt,3pt>\Drop{H}
   \restore \ar@{-}[2,0]\\
&\ar@{-}`l/4pt [1,-1] [1,-1] \ar@{-}`r [1,1] [1,1]&&&\\
\ar@{-}[5,0]&& \ar@{-}[1,1]+<-0.1pc,0.1pc> && \ar@{-}[2,-2]\\
&&&&\\
&&\ar@{-}[2,0]&&\ar@{-}[-1,-1]+<0.1pc,-0.1pc>\ar@{-}[2,0]\\
&&&&\\
&&*+[o]+<0.40pc>[F]{f}\ar@{-}[3,0]&&*+[o]+<0.40pc>[F]{f}\ar@{-}[3,0]\\
\ar@{-}`d/4pt [1,2][1,2]&&&&\\
&&&&\\
&&\ar@{-}`d/4pt [1,1] `[0,2] [0,2]&&\\
&&&\ar@{-}[1,0]&\\
&&&& }} \grow{\xymatrix@!0{
\\\\\\\\\\\\
\save\go+<0pt,0pt>\Drop{\txt{=}}\restore }}
\grow{\xymatrix@!0{ &\save\go+<0pt,3pt>\Drop{H}\restore \ar@{-}[1,0]\restore &&&
\save\go+<0pt,3pt>\Drop{H}
   \restore \ar@{-}[2,0]\\
&\ar@{-}`l/4pt [1,-1] [1,-1] \ar@{-}`r [1,1] [1,1]&&&\\
\ar@{-}[5,0]&&\ar@{-}[2,0]&&*+[o]+<0.40pc>[F]{f}\ar@{-}[2,0]\\
&&&&\\
&&\ar@{-}[1,1]+<-0.125pc,0.0pc> \ar@{-}[1,1]+<0.0pc,0.125pc>&&\ar@{-}[2,-2]\\
&&&&\\
&&\ar@{-}[4,0]&&\ar@{-}[-1,-1]+<0.125pc,0.0pc>\ar@{-}[-1,-1]+<0.0pc,-0.125pc>\ar@{-}[2,0]\\
\ar@{-}`d/4pt [1,2][1,2] &&&&\\
&&&&*+[o]+<0.40pc>[F]{f}\ar@{-}[2,0]\\
&&&&\\
&&\ar@{-}`d/4pt [1,1] `[0,2] [0,2]&&\\
&&&\ar@{-}[1,0]&\\
&&&& }} \grow{\xymatrix@!0{
\\\\\\\\\\\\
\save\go+<0pt,0pt>\Drop{\txt{=}}\restore }}
\grow{\xymatrix@!0{ &\save\go+<0pt,3pt>\Drop{H}\restore \ar@{-}[1,0]\restore &&&
\save\go+<0pt,3pt>\Drop{H}
   \restore \ar@{-}[2,0]\\
&\ar@{-}`l/4pt [1,-1] [1,-1] \ar@{-}`r [1,1] [1,1]&&&\\
\ar@{-}[1,1]+<-0.1pc,0.1pc> && \ar@{-}[2,-2]&&*+[o]+<0.40pc>[F]{f}\ar@{-}[2,0]\\
&&&&\\
\ar@{-}[3,0]&&\ar@{-}[-1,-1]+<0.1pc,-0.1pc>\ar@{-}[1,1]+<-0.125pc,0.0pc>
     \ar@{-}[1,1]+<0.0pc,0.125pc>&&\ar@{-}[2,-2]\\
&&&&\\
&&\ar@{-}[4,0]&&\ar@{-}[-1,-1]+<0.125pc,0.0pc>\ar@{-}[-1,-1]+<0.0pc,-0.125pc>\ar@{-}[2,0]\\
\ar@{-}`d/4pt [1,2][1,2] &&&&\\
&&&&*+[o]+<0.40pc>[F]{f}\ar@{-}[2,0]\\
&&&&\\
&&\ar@{-}`d/4pt [1,1] `[0,2] [0,2]&&\\
&&&\ar@{-}[1,0]&\\
&&&& }} \grow{\xymatrix@!0{
\\\\\\\\\\\\
\save\go+<0pt,0pt>\Drop{\txt{=}}\restore }}
\grow{\xymatrix@!0{ &\save\go+<0pt,3pt>\Drop{H}\restore \ar@{-}[1,0]\restore &&&
\save\go+<0pt,3pt>\Drop{H}
   \restore \ar@{-}[2,0]\\
&\ar@{-}`l/4pt [1,-1] [1,-1] \ar@{-}`r [1,1] [1,1]&&&\\
\ar@{-}[1,0]&&\ar@{-}[1,0]&&*+[o]+<0.40pc>[F]{f}\ar@{-}[3,0]\\
\ar@{-}[2,2]&&\ar@{-}`d/4pt [1,2][1,2] &&\\
&&&&\\
&&\ar@{-}[1,1]+<-0.125pc,0.0pc> \ar@{-}[1,1]+<0.0pc,0.125pc>&&\ar@{-}[2,-2]\\
&&&&\\
&&\ar@{-}[4,0]&&\ar@{-}[-1,-1]+<0.125pc,0.0pc>\ar@{-}[-1,-1]+<0.0pc,-0.125pc>\ar@{-}[2,0]\\
&&&&\\
&&&&*+[o]+<0.40pc>[F]{f}\ar@{-}[2,0]\\
&&&&\\
&&\ar@{-}`d/4pt [1,1] `[0,2] [0,2]&&\\
&&&\ar@{-}[1,0]&\\
&&&& }} \grow{\xymatrix@!0{
\\\\\\\\\\\\
\save\go+<0pt,0pt>\Drop{\txt{=}}\restore }}
\grow{\xymatrix@!0{
\\\\
&\save\go+<0pt,3pt>\Drop{H}\restore \ar@{-}[1,0]\restore &&& \save\go+<0pt,3pt>\Drop{H}
   \restore \ar@{-}[2,0]\\
&\ar@{-}`l/4pt [1,-1] [1,-1] \ar@{-}`r [1,1] [1,1]&&&\\
\ar@{-}[2,0]&&\ar@{-}[1,0]&&*+[o]+<0.40pc>[F]{f}\ar@{-}[4,0]\\
&&\ar@{-}`d/4pt [1,2][1,2] &&\\
*+[o]+<0.40pc>[F]{f}\ar@{-}[2,0]&&&&\\
&&&&\\
\ar@{-}`d/4pt [1,2] `[0,4] [0,4]&&&&\\
&&\ar@{-}[1,0]&&\\
&&&& }} \grow{\xymatrix@!0{
\\\\\\\\\\\\
\save\go+<0pt,0pt>\Drop{\txt{.}}\restore }}
\end{equation*}
So, $\Ho^1(H,A,s)$ is the group of the compatible with $s$ and $s$-central regular crossed
homomorphisms divided by the subgroup form by the inner crossed homomorphisms.

\section{Braided Hopf crossed products and $\Ho^2(H,A,s)$}
Let $H$ be a braided bialgebra and let $(A,s)$ be a left $H$-module algebra. We let $\chi \colon
H\ot A\to A\ot H$ denote the map defined by $\chi:=(\rho \ot H)\xcirc (H\ot s)\xcirc (\Delta \ot
A)$, where $\rho\colon H\ot A\to A$ is the action of $H$ on $A$. Suppose given a map $f\colon
H^2\to A$. Let $\mathcal{F}_f\colon H^2\to A\ot H$ be the map defined by $\mathcal{F}_f:=(f\ot
\mu)\xcirc \Delta_{H^2}$.

\begin{definition}[G-G, Definition~9.2]\label{de3.1} We say that a map $f\colon H^2\to A$ is {\em normal} if
$f(1\ot x)=f(x\ot 1)= \ep(x)$ for all $x\in H$, and that $f$ is a {\em cocycle} that satisfies the {\em twisted
module condition} if
$$
\spreaddiagramcolumns{-1.6pc}\spreaddiagramrows{-1.6pc}
\objectmargin{0.0pc}\objectwidth{0.0pc}
\def\objectstyle{\sssize}
\def\labelstyle{\sssize}
\grow{\xymatrix@!0{
   &\ar@{-}[1,0]&&&&\ar@{-}[1,0]&&&&\ar@{-}[1,0]\\
                     & \ar@{-}`l/4pt [1,-1] [1,-1] \ar@{-}`r [1,1] [1,1]&&&&
                     \ar@{-}`l/4pt [1,-1] [1,-1] \ar@{-}`r [1,1] [1,1]&&&&
                     \ar@{-}`l/4pt [1,-1] [1,-1] \ar@{-}`r [1,1] [1,1]&&\\
  \ar@{-}[4,0]&&\ar@{-}[1,1]+<-0.1pc,0.1pc> && \ar@{-}[2,-2]&&\ar@{-}[1,1]+<-0.1pc,0.1pc> &&
                                                                \ar@{-}[2,-2]&&\ar@{-}[4,0]\\
  &&&&&&&&&&\\
  &&\ar@{-}[2,0]&&\ar@{-}[-1,-1]+<0.1pc,-0.1pc>\ar@{-}[1,1]+<-0.1pc,0.1pc> && \ar@{-}[2,-2]
                           && \ar@{-}[-1,-1]+<0.1pc,-0.1pc>\ar@{-}[2,0]&&\\
  &&&&&&&&&&\\
  \ar@{-}[2,1]&&\ar@{-}`d/4pt [1,1] `[0,2] [0,2]&&&&\ar@{-}[-1,-1]+<0.1pc,-0.1pc> \ar@{-}[2,1]
                                                       &&\ar@{-}`d/4pt [1,1] `[0,2] [0,2]&&\\
  &&&{\bullet}\ar@{-}[1,0]&&&&&&\ar@{-}[1,0]&\\
  &\ar@{-}`d/4pt [1,2][1,2] && \ar@{-}[2,0]&&&&\ar@{-}`d/4pt [1,1] `[0,2] [0,2]&&\\
  &&&&&&&&{\bullet}\ar@{-}[3,0] &&\\
  &&&\ar@{-}[2,1]&&&&&&&\\
  &&&&&&&&&&\\
  &&&&\ar@{-}`d/4pt [1,2] `[0,4] [0,4] &&&&&&\\
  &&&&&&\ar@{-}[1,0]&&&&\\
  &&&&&&
}}
\grow{\xymatrix{
\\\\\\\\\\\\\\
\txt{=} }}
\grow{\xymatrix@!0{
\\\\
  &\ar@{-}[1,0]&&&&\ar@{-}[1,0]&&\ar@{-}[6,0]\\
  & \ar@{-}`l/4pt [1,-1] [1,-1] \ar@{-}`r [1,1] [1,1]&&&&\ar@{-}`l/4pt [1,-1] [1,-1]
                                                               \ar@{-}`r [1,1] [1,1]&&\\
  \ar@{-}[2,0]&&\ar@{-}[1,1]+<-0.1pc,0.1pc> && \ar@{-}[2,-2]&&\ar@{-}[2,0]&\\
  &&&&&&&\\
  \ar@{-}`d/4pt [1,1] `[0,2] [0,2]&&&&\ar@{-}[-1,-1]+<0.1pc,-0.1pc>
                                                      \ar@{-}`d/4pt [1,1] `[0,2] [0,2]&&&\\
  &{\bullet}\ar@{-}[1,0]&&&&\ar@{-}[1,0]&&\\
  &\ar@{-}[2,1]&&&&\ar@{-}`d/4pt [1,1] `[0,2] [0,2]&&\\
  &&&&&&{\bullet}\ar@{-}[1,0]&\\
  &&\ar@{-}`d/4pt [1,2] `[0,4] [0,4] &&&&&\\
  &&&&\ar@{-}[1,0]&&&\\
  &&&&&&
}}
\quad
\grow{\xymatrix{
\\\\\\\\\\\\
\txt{and} }}
\quad
\grow{\xymatrix@!0{
  &\ar@{-}[1,0]&&&&\ar@{-}[1,0]&&&\ar@{-}[4,0]\\
  & \ar@{-}`l/4pt [1,-1] [1,-1] \ar@{-}`r [1,1] [1,1]&&&&\ar@{-}`l/4pt [1,-1] [1,-1]
                                                               \ar@{-}`r [1,1] [1,1]&&\\
  \ar@{-}[2,0]&&\ar@{-}[1,1]+<-0.1pc,0.1pc> && \ar@{-}[2,-2]&&\ar@{-}[2,0]&\\
  &&&&&&&\\
  \ar@{-}[4,0]&&\ar@{-}[4,0]&&\ar@{-}[-1,-1]+<0.1pc,-0.1pc>\ar@{-}[2,0]&&
          \ar@{-}[1,1]+<-0.125pc,0.0pc> \ar@{-}[1,1]+<0.0pc,0.125pc>&&\ar@{-}[2,-2]&&\\
  &&&&&&&&\\
  &&&&\ar@{-}[1,1]+<-0.125pc,0.0pc> \ar@{-}[1,1]+<0.0pc,0.125pc>&&\ar@{-}[2,-2]
        &&\ar@{-}[2,0]\ar@{-}[-1,-1]+<0.125pc,0.0pc>\ar@{-}[-1,-1]+<0.0pc,-0.125pc>\\
  &&&&&&&&\\
  \ar@{-}[2,2]&&\ar@{-}`d/4pt [1,2][1,2] && \ar@{-}[2,0]
                   &&\ar@{-}[-1,-1]+<0.125pc,0.0pc>\ar@{-}[-1,-1]+<0.0pc,-0.125pc>
  \ar@{-}`d/4pt [1,1] `[0,2] [0,2] &&\\
  &&&&&&&{\bullet}\ar@{-}[1,0]&\\
  &&\ar@{-}`d/4pt [1,2][1,2] && \ar@{-}[2,0]&&&\ar@{-}[2,-1]&\\
  &&&&&&&&\\
  &&&&\ar@{-}`d/4pt [1,1] `[0,2] [0,2]&&&&\\
  &&&&&\ar@{-}[1,0]&&&\\
  &&&&&&&&
}}
\grow{\xymatrix{
\\\\\\\\\\\\\\
\txt{=} }}
\grow{\xymatrix@!0{
\\
  &\ar@{-}[1,0]&&&&\ar@{-}[1,0]&&\ar@{-}[4,0]\\
  & \ar@{-}`l/4pt [1,-1] [1,-1] \ar@{-}`r [1,1] [1,1]&&&&\ar@{-}`l/4pt [1,-1] [1,-1]
                                                               \ar@{-}`r [1,1] [1,1]&&\\
  \ar@{-}[2,0]&&\ar@{-}[1,1]+<-0.1pc,0.1pc> && \ar@{-}[2,-2]&&\ar@{-}[2,0]&\\
  &&&&&&&\\
  \ar@{-}[2,1]&&\ar@{-}[2,1]&&\ar@{-}[-1,-1]+<0.1pc,-0.1pc>\ar@{-}`d/4pt [1,1] `[0,2] [0,2]
                                                                              &&&\ar@{-}[2,0]\\
  &&&&&\ar@{-}[1,0]&&\\
  &\ar@{-}`d/4pt [1,1] `[0,2] [0,2]&&&&\ar@{-}`d/4pt [1,2][1,2] && \ar@{-}[4,0]\\
  && {\bullet}\ar@{-}'[1,0][3,1]&&&&&\\
  &&&&&&&\\
  &&&&&&&\\
  &&&\ar@{-}`d/4pt [1,2] `[0,4] [0,4] &&&&\\
  &&&&&\ar@{-}[1,0]&&\\
  &&&&&&
}}
\grow{\xymatrix{
\\\\\\\\\\\\
\txt{,\qquad where } }} \grow{\xymatrix@!0{
\\\\\\\\\\\\
\ar@{-}`d/4pt [1,1] `[0,2] [0,2] &&\\
&{\bullet}\ar@{-}[1,0]\\
& }} \grow{\xymatrix{
\\\\\\\\\\\\
\txt{$= f$.} }}
$$
More precisely, the first equality is the cocycle condition and the second one is the twisted module condition.
\end{definition}

From \cite[Section~9]{G-G1}, we know that if $f\colon H^2\to A$ is a normal cocycle compatible with $s$ satisfying
the twisted module condition, then $A\ot H$ is an associative algebra with unit $1\ot 1$ via
$$
\mu:=(\mu_A\ot H)\xcirc (\mu_A\ot \mathcal{F}_f)\xcirc (A\ot \chi\ot H).
$$
This algebra is called the {\em crossed product} of $(A,s)$ with $H$ associated with $f$, and denoted $A\#_f H$.
The element $a\ot h$ of $A\#_f H$ will be usually written $a\# h$. The cocycle $f$ is said to be {\em invertible}
if it invertible with respect to the convolution product in $\Hom_k(H^2,A)$.

\subsection{Equivalence of braided crossed products} In this Subsection we recall from
\cite[Section 12]{G-G1} the notion of equivalence of crossed products. For this we need previously
to recall the concept of right $H$-braided comodule algebra introduced in Section~5 of the same
paper.

It is immediate that $(H,c)$ is a coalgebra in $\mathcal{LB}_H$. Then we can considerer right
$(H,c)$-comodules in $\mathcal{LB}_H$. We will refer to them as {\em right $H$-braided comodules}
or simply as {\em right $H$-comodules}. For instance $(k,\tau)$ is a right $H$-comodule via the
trivial coaction and the tensor product $(V,s_V)\ot (U,s_U)$ of two right $H$-comodules is also
via the codiagonal coaction. We let $(\mathcal{LB}_H)^H$ denote the category of right
$H$-comodules. This is a monoidal category with the usual associativity and unit constraints. By
definition a {\em right $H$-braided comodule algebra} (or simply a {\em right $H$-comodule
algebra}) is an algebra in $(\mathcal{LB}_H)^H$. As above let $(A,s)$ be a left $H$-module and let
$f$ be a normal cocycle compatible with $s$ satisfying the condition of twisted module. The
crossed $A\#_f H$ of $(A,s)$ with $H$, associated with $f$, is a right $H$-comodule algebra when
is endowed with the transposition $\wh{s}=s\ot c$ and the coaction $A\ot\Delta$.

\begin{definition}\label{de3.2} We said that two crossed products $A\#_f H$ and $A\#_{f'} H$, of
$(A,s)$ with $H$, are equivalent if there is an isomorphism of $H$-comodule algebras
$$
g\colon (A\#_f H,\wh{s})\to (A\#_{f'} H,\wh{s}),
$$
which is also an $A$-linear map.
\end{definition}

Assume that $H$ is a braided Hopf algebra. From \cite[Corollary~12.4]{G-G1} we know that $A\#_f H$
and $A\#_{f'} H$ are equivalent if and only if there exists a convolution invertible map $u\colon
H\to A$ such that

\begin{enumerate}

\item $u(1) = 1$,

\smallskip

\item $(u\ot H)\xcirc c = s\xcirc (H\ot u)$,

\smallskip

\item $\rho = \mu_A \xcirc (\mu_A\ot u)\xcirc (u^{-1}\ot \chi)\xcirc (\Delta\ot A)$,

\smallskip

\item $f'= \mu_A^2\xcirc (A\ot\rho\ot \mu_A)\xcirc (u^{-1}\ot H\ot u^{-1}\ot A\ot u)\xcirc (\De\ot H\ot
\mathcal{F}_f) \xcirc \De_{H^2}$.

\smallskip

\end{enumerate}

\noindent Condition~$(1)$ is usually expressed saying that $u$ is normal and condition~$(2)$ that $u$ is
compatible with $s$. Furthermore, since the right side of $(4)$ is equal to
$$
(u^{-1}\ot \epsilon)*\bigl(\rho\xcirc (H\ot u^{-1})\bigr)*f*(u \xcirc \mu_H),
$$
this last condition is equivalent to

\renewcommand{\descriptionlabel}[1]
{\textrm{#1}}
\begin{description}

\item[(4')] $[\rho\xcirc (H\ot u)]*(u\ot \epsilon)*f' = f*(u \xcirc \mu_H)$.

\end{description}

\smallskip

Let $H$ be a cocommutative braided Hopf algebra and let $(A,s)$ be a left $H$-module algebra. Our
aim in this section is to show that $\Ho^2(H,A,s)$ classifies the equivalence classes of crossed
products $A\#_f H$, with $f$ convolution invertible. The following results are well-known in the
classical case ($H$ a cocommutative Hopf algebra and $s$ the flip). Their importance for our task
is evident.

\begin{proposition}\label{pr3.3} If $H$ is a cocommutative braided Hopf algebra, then a map $u\colon H\to A$
satisfies condition~(3) if and only if it is $s$-central.
\end{proposition}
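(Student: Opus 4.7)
The plan is to transform condition~(3) into a convolution identity, reduce it via cocommutativity to the statement of $s$-centrality precomposed with the $H$-action, and then strip off that action using the antipode.

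First I would convolve (3) on the left with $u$. The convolution action $(g*f)(h\ot a):=g(h_{(1)})\cdot f(h_{(2)}\ot a)$ of $\Hom_k(H,A)$ on $\Hom_k(H\ot A,A)$ is associative, and $u*u^{-1}=\eta\xcirc\ep$; hence (3) is equivalent to
$$
u*\rho \;=\; \mu_A\xcirc(A\ot u)\xcirc\chi.
$$

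Second, I would use cocommutativity. Item~(3) of Remark~\ref{re1.6 car de H-braid mod alg} gives $s\xcirc(H\ot\rho)=(\rho\ot H)\xcirc(H\ot s)\xcirc(c\ot A)$, and $c\xcirc\De_H=\De_H$ eliminates the $c$, yielding $\chi=s\xcirc(H\ot\rho)\xcirc(\De_H\ot A)$. Introducing $\phi(h\ot a):=u(h)\cdot a$ and $\psi(h\ot a):=s(h\ot a)_1\cdot u(s(h\ot a)_2)$, so that $s$-centrality reads precisely $\phi=\psi$, the displayed identity becomes $T(\phi)=T(\psi)$, where $T$ is the operator on $\Hom_k(H\ot A,A)$ defined by $(Tf)(h\ot a):=f(h_{(1)}\ot h_{(2)}\!\cdot a)$.

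Finally, the formula $(T^{-1}g)(h\ot a):=g(h_{(1)}\ot S(h_{(2)})\!\cdot a)$ provides a two-sided inverse for $T$, as follows from $h_{(1)}S(h_{(2)})=\ep(h)1$ together with the $H$-module axiom. Consequently (3) is equivalent to $\phi=\psi$, i.e., to $u$ being $s$-central. The main obstacle is the identification of $\chi$ with $s\xcirc(H\ot\rho)\xcirc(\De_H\ot A)$ under cocommutativity; this is most transparent in the graphical calculus, where the braid $c$ slides through $\De_H$ and disappears. The remainder is pure Hopf-algebraic bookkeeping.
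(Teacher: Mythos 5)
Your argument is correct and is essentially the paper's own proof: you cancel $u^{-1}$ by convolution, use cocommutativity ($c\xcirc\De=\De$) together with item~(3) of Remark~\ref{re1.6 car de H-braid mod alg} to rewrite $\chi$ as $s\xcirc(H\ot\rho)\xcirc(\De\ot A)$, and then strip off the precomposition with the bijection $(H\ot\rho)\xcirc(\De\ot A)$ (your operator $T$), whose inverse $h\ot a\mapsto h_{(1)}\ot S(h_{(2)})\cdot a$ is exactly the one invoked in the paper's closing sentence. No changes needed.
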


\begin{proof} Is is easy to see that $\rho$ satisfies the equality in the statement if and only if
\begin{equation*}
\spreaddiagramcolumns{-1.6pc}\spreaddiagramrows{-1.6pc}
\objectmargin{0.0pc}\objectwidth{0.0pc}
\def\objectstyle{\sssize}
\def\labelstyle{\sssize}
\grow{\xymatrix@!0{
\\
&\save\go+<0pt,3pt>\Drop{H}\restore \ar@{-}[1,0]\restore &&& \save\go+<0pt,3pt>\Drop{A}
   \restore \ar@{-}[6,0]\\
& \ar@{-}`l/4pt [1,-1] [1,-1] \ar@{-}`r [1,1] [1,1]&&&\\
\ar@{-}[2,0]&&\ar@{-}`d/4pt [1,2][1,2]&&\\
&&&&\\
*+[o]+<0.40pc>[F]{u}\ar@{-}[2,0]&&&&\\
&&&&\\
\ar@{-}`d/4pt [1,2] `[0,4] [0,4]&&&&\\
&&\ar@{-}[1,0]&&\\
&&&&
}}
\grow{\xymatrix{
\\\\\\\\
\txt{=} }}
\grow{\xymatrix@!0{
&\save\go+<0pt,3pt>\Drop{H}\restore \ar@{-}[1,0]\restore &&& \save\go+<0pt,3pt>\Drop{A}
   \restore \ar@{-}[2,0]\\
& \ar@{-}`l/4pt [1,-1] [1,-1] \ar@{-}`r [1,1] [1,1]&&&\\
\ar@{-}[2,0]&&\ar@{-}[1,1]+<-0.125pc,0.0pc> \ar@{-}[1,1]+<0.0pc,0.125pc>&&\ar@{-}[2,-2]\\
&&&&\\
\ar@{-}`d/4pt [1,2][1,2] &&\ar@{-}[4,0]&&\ar@{-}[-1,-1]+<0.125pc,0.0pc>\ar@{-}[-1,-1]
   +<0.0pc,-0.125pc>\ar@{-}[2,0]\\
&&&&\\
&&&&*+[o]+<0.40pc>[F]{u}\ar@{-}[2,0]\\
&&&&\\
&&\ar@{-}`d/4pt [1,1] `[0,2] [0,2]&&\\
&&&\ar@{-}[1,0]&\\
&&&&
}}
\grow{\xymatrix{
\\\\\\\\
\txt{.} }}
\end{equation*}
But since $H$ is cocommutative and $(A,s)$ is a left $H$-module algebra,
\begin{equation*}
\spreaddiagramcolumns{-1.6pc}\spreaddiagramrows{-1.6pc}
\objectmargin{0.0pc}\objectwidth{0.0pc}
\def\objectstyle{\sssize}
\def\labelstyle{\sssize}
\grow{\xymatrix@!0{
\\
&\save\go+<0pt,3pt>\Drop{H}\restore \ar@{-}[1,0]\restore &&& \save\go+<0pt,3pt>\Drop{A}
   \restore \ar@{-}[2,0]\\
& \ar@{-}`l/4pt [1,-1] [1,-1] \ar@{-}`r [1,1] [1,1]&&&\\
\ar@{-}[2,0]&&\ar@{-}[1,1]+<-0.125pc,0.0pc> \ar@{-}[1,1]+<0.0pc,0.125pc>&&\ar@{-}[2,-2]\\
&&&&\\
\ar@{-}`d/4pt [1,2][1,2] &&\ar@{-}[4,0]&&\ar@{-}[-1,-1]+<0.125pc,0.0pc>\ar@{-}[-1,-1]
   +<0.0pc,-0.125pc>\ar@{-}[2,0]\\
&&&&\\
&&&&*+[o]+<0.40pc>[F]{u}\ar@{-}[2,0]\\
&&&&\\
&&\ar@{-}`d/4pt [1,1] `[0,2] [0,2]&&\\
&&&\ar@{-}[1,0]&\\
&&&&
}}
\grow{\xymatrix{
\\\\\\\\\\
\txt{=} }}
\grow{\xymatrix@!0{
&\save\go+<0pt,3pt>\Drop{H}\restore \ar@{-}[1,0]\restore &&& \save\go+<0pt,3pt>\Drop{A}
   \restore \ar@{-}[4,0]\\
& \ar@{-}`l/4pt [1,-1] [1,-1] \ar@{-}`r [1,1] [1,1]&&&\\
\ar@{-}[1,1]+<-0.1pc,0.1pc> && \ar@{-}[2,-2]&&\\
&&&&\\
\ar@{-}[2,0]&&\ar@{-}[-1,-1]+<0.1pc,-0.1pc>\ar@{-}[1,1]+<-0.125pc,0.0pc> \ar@{-}[1,1]+<0.0pc,0.125pc>&&\ar@{-}[2,-2]\\
&&&&\\
\ar@{-}`d/4pt [1,2][1,2] &&\ar@{-}[4,0]&&\ar@{-}[-1,-1]+<0.125pc,0.0pc>\ar@{-}[-1,-1]
   +<0.0pc,-0.125pc>\ar@{-}[2,0]\\
&&&&\\
&&&&*+[o]+<0.40pc>[F]{u}\ar@{-}[2,0]\\
&&&&\\
&&\ar@{-}`d/4pt [1,1] `[0,2] [0,2]&&\\
&&&\ar@{-}[1,0]&\\
&&&&
}}
\grow{\xymatrix{
\\\\\\\\\\
\txt{=} }}
\grow{\xymatrix@!0{
&\save\go+<0pt,3pt>\Drop{H}\restore \ar@{-}[1,0]\restore &&& \save\go+<0pt,3pt>\Drop{A}
   \restore \ar@{-}[4,0]\\
& \ar@{-}`l/4pt [1,-1] [1,-1] \ar@{-}`r [1,1] [1,1]&&&\\
\ar@{-}[2,2]&&\ar@{-}`d/4pt [1,2][1,2]&&\\
&&&&\\
&&\ar@{-}[1,1]+<-0.125pc,0.0pc> \ar@{-}[1,1]+<0.0pc,0.125pc>&&\ar@{-}[2,-2]\\
&&&&\\
&&\ar@{-}[4,0]&&\ar@{-}[-1,-1]+<0.125pc,0.0pc>\ar@{-}[-1,-1]+<0.0pc,-0.125pc>\ar@{-}[2,0]\\
&&&&\\
&&&&*+[o]+<0.40pc>[F]{u}\ar@{-}[2,0]\\
&&&&\\
&&\ar@{-}`d/4pt [1,1] `[0,2] [0,2]&&\\
&&&\ar@{-}[1,0]&\\
&&&&
}}
\grow{\xymatrix{
\\\\\\\\\\
\txt{.} }}
\end{equation*}
The result follows now immediately using that $(H\ot \rho)\xcirc (\Delta\ot A)$ is bijective with inverse $(H\ot
\rho\xcirc S)\xcirc (\Delta\ot A)$.
\end{proof}

\begin{proposition}\label{pr3.4} If $H$ is a cocommutative braided Hopf algebra, then a map
$$
f\colon H^2\to A
$$
satisfies the twisted module condition if and only if it is $s$-central.
\end{proposition}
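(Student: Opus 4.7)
The plan is to mirror, almost verbatim, the strategy used in the proof of Proposition~\ref{pr3.3}. Both the twisted module condition and $s$-centrality can be written as equalities of maps out of $H^2\ot A$, and the task is to show that, under cocommutativity of $H$ together with the $H$-braided module algebra axioms of Remark~\ref{re1.6 car de H-braid mod alg}, the difference between them collapses.

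First I would unfold the twisted module condition diagrammatically and rewrite both sides using item~(4) of Remark~\ref{re1.6 car de H-braid mod alg}, which lets me replace the iterated action $\rho\xcirc (H\ot \rho)$ on the left-hand side by $\rho\xcirc (\mu_H\ot A)$ composed with a transposition. This combines the two $H$-inputs, so both sides will involve a single copy of the action on the common ``product'' $(h_{(2)}l_{(2)})\cdot a$, together with a cocycle term $f(h_{(1)}\ot l_{(1)})$ sitting on one particular side in the product of $A$. The key will then be to show that the sided-ness of the $f$-factor is exactly encoded by $s$-centrality, up to a remaining action factor.

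Next I would move the braid $c$ that appears in the iterated comultiplication $\De_{H^2}$ past the comultiplications by using cocommutativity of $H$ (exactly as in the string of equalities in the proof of Proposition~\ref{pr3.3}), together with the compatibility of $s$ with $\mu_H$, $\De_H$ and $c$, and items~(1) and (3) of Remark~\ref{re1.6 car de H-braid mod alg}. After these normalisations, both the twisted module condition and the $s$-central condition become equalities of the form
$$
(H^2\ot \mu_A)\xcirc(\De_{H^2}\ot X) = (H^2\ot \mu_A)\xcirc(\De_{H^2}\ot Y),
$$
where the only asymmetry between $X$ and $Y$ is the position of $f$ relative to the input $a$.

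Finally, to pass from that equality to the $s$-central condition on $f$ alone, I would cancel the common outer action block. As in Proposition~\ref{pr3.3}, the map $(H^2\ot \rho)\xcirc (\De_{H^2}\ot A)$ is bijective (with inverse produced from the antipode $S$, applied twice and together with the braid and $\De$), so this cancellation is legitimate, and what remains is exactly
$$
\mu_A\xcirc(A\ot f)\xcirc s^2=\mu_A\xcirc(f\ot A),
$$
i.e.\ $f$ is $s$-central. The converse implication runs the same diagram reading in reverse. The main obstacle, as in Proposition~\ref{pr3.3}, will be purely diagrammatic: tracking the several copies of the braid $c$ produced by $\De_{H^2}=(H\ot\scc_1\ot H)\xcirc(\De\ot\De)$ and by the transposition $s^2$, and ensuring that cocommutativity absorbs them correctly so that the two sides line up; no new idea is required beyond those already used in Propositions~\ref{pr3.3} and in Lemma~\ref{le2.1}.
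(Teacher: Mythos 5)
Your plan follows essentially the same route as the paper's proof: use cocommutativity together with the axioms of Remark~\ref{re1.6 car de H-braid mod alg} to rewrite the twisted module condition until both sides share the common block $(H^2\ot\rho\xcirc \mu_H)\xcirc(\De_{H^2}\ot A)$, and then cancel that block because it is bijective with inverse $(H^2\ot\rho\xcirc S\xcirc \mu_H)\xcirc(\De_{H^2}\ot A)$, leaving exactly the $s$-centrality of $f$. The only small correction is that the map you cancel must include the multiplication $\mu_H$ (it is the action of the product $h_{(2)}l_{(2)}$ on $a$), and its inverse applies the antipode once, to that product, rather than twice.
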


\begin{proof} Since $(A,s)$ is an $H$-module algebra and $H$ is cocommutative, we have
$$
\spreaddiagramcolumns{-1.6pc}\spreaddiagramrows{-1.6pc}
\objectmargin{0.0pc}\objectwidth{0.0pc}
\def\objectstyle{\sssize}
\def\labelstyle{\sssize}
\grow{\xymatrix@!0{
\\
  &\ar@{-}[1,0]&&&&\ar@{-}[1,0]&&&\ar@{-}[4,0]\\
  & \ar@{-}`l/4pt [1,-1] [1,-1] \ar@{-}`r [1,1] [1,1]&&&&\ar@{-}`l/4pt [1,-1] [1,-1]
                                                               \ar@{-}`r [1,1] [1,1]&&\\
  \ar@{-}[2,0]&&\ar@{-}[1,1]+<-0.1pc,0.1pc> && \ar@{-}[2,-2]&&\ar@{-}[2,0]&\\
  &&&&&&&\\
  \ar@{-}[4,0]&&\ar@{-}[4,0]&&\ar@{-}[-1,-1]+<0.1pc,-0.1pc>\ar@{-}[2,0]&&
          \ar@{-}[1,1]+<-0.125pc,0.0pc> \ar@{-}[1,1]+<0.0pc,0.125pc>&&\ar@{-}[2,-2]\\
  &&&&&&&&\\
  &&&&\ar@{-}[1,1]+<-0.125pc,0.0pc> \ar@{-}[1,1]+<0.0pc,0.125pc>&&\ar@{-}[2,-2]
        &&\ar@{-}[2,0]\ar@{-}[-1,-1]+<0.125pc,0.0pc>\ar@{-}[-1,-1]+<0.0pc,-0.125pc>\\
  &&&&&&&&\\
  \ar@{-}[2,2]&&\ar@{-}`d/4pt [1,2][1,2] && \ar@{-}[2,0]
                   &&\ar@{-}[-1,-1]+<0.125pc,0.0pc>\ar@{-}[-1,-1]+<0.0pc,-0.125pc>
  \ar@{-}`d/4pt [1,1] `[0,2] [0,2] &&\\
  &&&&&&&{\bullet}\ar@{-}[1,0]&\\
  &&\ar@{-}`d/4pt [1,2][1,2] && \ar@{-}[2,0]&&&\ar@{-}[2,-1]&\\
  &&&&&&&&\\
  &&&&\ar@{-}`d/4pt [1,1] `[0,2] [0,2]&&&&\\
  &&&&&\ar@{-}[1,0]&&&\\
  &&&&&&&&
}}
\grow{\xymatrix{
\\\\\\\\\\\\\\\\
\txt{=} }}
\grow{\xymatrix@!0{
  &\ar@{-}[1,0]&&&&\ar@{-}[1,0]&&&\ar@{-}[6,0]\\
  & \ar@{-}`l/4pt [1,-1] [1,-1] \ar@{-}`r [1,1] [1,1]&&&&\ar@{-}`l/4pt [1,-1] [1,-1]
                                                               \ar@{-}`r [1,1] [1,1]&&\\
\ar@{-}[1,1]+<-0.1pc,0.1pc> && \ar@{-}[2,-2]&&\ar@{-}[1,1]+<-0.1pc,0.1pc> && \ar@{-}[2,-2]&&\\
&&&&&&&&\\
  \ar@{-}[2,0]&&\ar@{-}[-1,-1]+<0.1pc,-0.1pc>\ar@{-}[1,1]+<-0.1pc,0.1pc> && \ar@{-}[2,-2]
       &&\ar@{-}[-1,-1]+<0.1pc,-0.1pc>\ar@{-}[2,0]&\\
  &&&&&&&\\
  \ar@{-}[4,0]&&\ar@{-}[4,0]&&\ar@{-}[-1,-1]+<0.1pc,-0.1pc>\ar@{-}[2,0]&&
          \ar@{-}[1,1]+<-0.125pc,0.0pc> \ar@{-}[1,1]+<0.0pc,0.125pc>&&\ar@{-}[2,-2]\\
  &&&&&&&&\\
  &&&&\ar@{-}[1,1]+<-0.125pc,0.0pc> \ar@{-}[1,1]+<0.0pc,0.125pc>&&\ar@{-}[2,-2]
        &&\ar@{-}[2,0]\ar@{-}[-1,-1]+<0.125pc,0.0pc>\ar@{-}[-1,-1]+<0.0pc,-0.125pc>\\
  &&&&&&&&\\
  \ar@{-}`d/4pt [1,1] `[0,2] [0,2]&&&& \ar@{-}[2,0]
                   &&\ar@{-}[-1,-1]+<0.125pc,0.0pc>\ar@{-}[-1,-1]+<0.0pc,-0.125pc>
  \ar@{-}`d/4pt [1,1] `[0,2] [0,2] &&\\
  &\ar@{-}`d/4pt [1,3][1,3]&&&&&&{\bullet}\ar@{-}[2,0]&\\
  &&&& \ar@{-}[1,0]&&&&\\
  &&&&\ar@{-}`d/4pt [1,1]+<0.2pc,0pc> `[0,3] [0,3]&&&&\\
  &&&&&\save[]+<0.2pc,0pc> \Drop{}\ar@{-}[1,0]+<0.2pc,0pc>\restore &&&\\
  &&&&&&&&
}}
\grow{\xymatrix{
\\\\\\\\\\\\\\\\
\txt{=}
}}
\grow{\xymatrix@!0{\\
&\ar@{-}[1,0]&&&&\ar@{-}[1,0]&&\ar@{-}[6,0]\\
& \ar@{-}`l/4pt [1,-1] [1,-1] \ar@{-}`r [1,1] [1,1]&&&& \ar@{-}`l/4pt [1,-1] [1,-1]
   \ar@{-}`r [1,1] [1,1]&&\\
\ar@{-}[3,0]&&\ar@{-}[2,0]&&\ar@{-}[1,1]+<-0.1pc,0.1pc> && \ar@{-}[2,-2]&\\
&&&&&&&\\
&&\ar@{-}`d/4pt [1,1] `[0,2] [0,2]&&&& \ar@{-}[-1,-1]+<0.1pc,-0.1pc>\ar@{-}[1,0]&\\
\ar@{-}[1,1]&&&\ar@{-}[1,0] &&&\ar@{-}[1,-1]&\\
&\ar@{-}[1,1]+<-0.1pc,0.1pc> && \ar@{-}[2,-2]&&\ar@{-}[1,1]+<-0.125pc,0.0pc>
    \ar@{-}[1,1]+<0.0pc,0.125pc>&&\ar@{-}[2,-2]\\
&&&&&&&\\
&\ar@{-}[2,0]&&\ar@{-}[-1,-1]+<0.1pc,-0.1pc>
   \ar@{-}[1,1]+<-0.125pc,0.0pc> \ar@{-}[1,1]+<0.0pc,0.125pc>&&\ar@{-}[2,-2]&&\ar@{-}[-1,-1]+<0.125pc,0.0pc>\ar@{-}[-1,-1]
    +<0.0pc,-0.125pc>\ar@{-}[2,0]\\
&&&&&&&\\
&\ar@{-}`d/4pt [1,2][1,2]&&\ar@{-}[2,0]&&\ar@{-}[-1,-1]+<0.125pc,0.0pc>\ar@{-}[-1,-1]
     +<0.0pc,-0.125pc>\ar@{-}`d/4pt [1,1] `[0,2] [0,2]&&\\
&&&&&&\ar@{-}[1,0]{\bullet}&\\
&&&\ar@{-}`d/4pt [1,1]+<0.2pc,0pc> `[0,3] [0,3] &&&&\\
&&&&\save[]+<0.2pc,0pc> \Drop{}\ar@{-}[1,0]+<0.2pc,0pc>\restore&&&\\
&&&&&&&
}}
\grow{\xymatrix{
\\\\\\\\\\\\\\\\
\txt{=} }}
\grow{\xymatrix@!0{
&\ar@{-}[1,0]&&&&\ar@{-}[1,0]&&&\ar@{-}[4,0]\\
& \ar@{-}`l/4pt [1,-1] [1,-1] \ar@{-}`r [1,1] [1,1]&&&& \ar@{-}`l/4pt [1,-1] [1,-1]
   \ar@{-}`r [1,1] [1,1]&&&\\
\ar@{-}[4,0]&&\ar@{-}[2,0]&&\ar@{-}[1,1]+<-0.1pc,0.1pc> && \ar@{-}[2,-2]&&\\
&&&&&&&&\\
&&\ar@{-}`d/4pt [1,1] `[0,2] [0,2]&&&& \ar@{-}[-1,-1]+<0.1pc,-0.1pc>\ar@{-}[1,1]
+<-0.125pc,0.0pc> \ar@{-}[1,1]+<0.0pc,0.125pc>&&\ar@{-}[2,-2]\\
&&&\ar@{-}[2,0] &&&&\\
\ar@{-}[3,3]&&&&&&\ar@{-}[1,-1]&&\ar@{-}[-1,-1]+<0.125pc,0.0pc>\ar@{-}[-1,-1]+<0.0pc,-0.125pc>\ar@{-}[4,0]\\
&&&\ar@{-}`d/4pt [1,2][1,2]&&\ar@{-}[2,0]&&&\\
&&&&&&&&\\
&&&\ar@{-}[1,1]+<-0.125pc,0.0pc> \ar@{-}[1,1]+<0.0pc,0.125pc>&&\ar@{-}[2,-2]&&&\\
&&&&&&&&\ar@{-}[1,-1]\\
&&&\ar@{-}[2,0]&&\ar@{-}`d/4pt [1,1] `[0,2] [0,2] \ar@{-}[-1,-1]+<0.125pc,0.0pc>
     \ar@{-}[-1,-1]+<0.0pc,-0.125pc>&&&\\
&&&&&&\ar@{-}[1,0]{\bullet}&&\\
&&&\ar@{-}`d/4pt [1,1]+<0.2pc,0pc> `[0,3] [0,3]&&&&&\\
&&&&\save[]+<0.2pc,0pc> \Drop{}\ar@{-}[1,0]+<0.2pc,0pc>\restore&&&&\\
&&&&&&&&
 }}
\grow{\xymatrix{
\\\\\\\\\\\\\\\\
\txt{=}
}}
\grow{\xymatrix@!0{
&\ar@{-}[1,0]&&&&\ar@{-}[1,0]&&\ar@{-}[8,0]\\
&\ar@{-}`l/4pt [1,-1] [1,-1] \ar@{-}`r [1,1] [1,1]&&&&
    \ar@{-}`l/4pt [1,-1] [1,-1] \ar@{-}`r [1,1] [1,1]&&\\
\ar@{-}[5,0]&&\ar@{-}[1,1]+<-0.1pc,0.1pc> && \ar@{-}[2,-2]&&\ar@{-}[2,0]&\\
&&&&&&&\\
&&\ar@{-}[2,1]&&\ar@{-}[-1,-1]+<0.1pc,-0.1pc>\ar@{-}`d/4pt [1,1] `[0,2] [0,2]&&&\\
&&&&&\ar@{-}[1,0]&&\\
&&&\ar@{-}[1,1]+<-0.1pc,0.1pc> && \ar@{-}[2,-2]&&\\
\ar@{-}[5,3]&&&&&&&\\
&&&\ar@{-}[2,0]&&\ar@{-}[-1,-1]+<0.1pc,-0.1pc>\ar@{-}[1,1]+<-0.125pc,0.0pc>
   \ar@{-}[1,1]+<0.0pc,0.125pc>&&\ar@{-}[2,-2]\\
&&&&&&&\\
&&&\ar@{-}`d/4pt [1,2][1,2]&&\ar@{-}[2,0]&&\ar@{-}[-1,-1]+<0.125pc,0.0pc>\ar@{-}[-1,-1]
   +<0.0pc,-0.125pc>\ar@{-}[4,0]\\
&&&&&&&\\
&&&\ar@{-}[1,1]+<-0.125pc,0.0pc> \ar@{-}[1,1]+<0.0pc,0.125pc>&&\ar@{-}[2,-2]&&\\
&&&&&&&\\
&&&\ar@{-}[2,0]&&\ar@{-}[-1,-1]+<0.125pc,0.0pc>\ar@{-}[-1,-1]+<0.0pc,-0.125pc>
      \ar@{-}`d/4pt [1,1] `[0,2] [0,2]&&\\
&&&&&&\ar@{-}[1,0]{\bullet}&\\
&&&\ar@{-}`d/4pt [1,1]+<0.2pc,0pc> `[0,3] [0,3]&&&&\\
&&&&\save[]+<0.2pc,0pc> \Drop{}\ar@{-}[1,0]+<0.2pc,0pc>\restore&&&\\
&&&&&&&
}}
\grow{\xymatrix{
\\\\\\\\\\\\\\\\
\txt{=}
}}
\grow{\xymatrix@!0{
\\
&\ar@{-}[1,0]&&&&\ar@{-}[1,0]&&\ar@{-}[8,0]\\
&\ar@{-}`l/4pt [1,-1] [1,-1] \ar@{-}`r [1,1] [1,1]&&&&
    \ar@{-}`l/4pt [1,-1] [1,-1] \ar@{-}`r [1,1] [1,1]&&\\
\ar@{-}[3,0]&&\ar@{-}[1,1]+<-0.1pc,0.1pc> && \ar@{-}[2,-2]&&\ar@{-}[2,0]&\\
&&&&&&&\\
&&\ar@{-}[4,3]&&\ar@{-}[-1,-1]+<0.1pc,-0.1pc>\ar@{-}`d/4pt [1,1] `[0,2] [0,2] &&&\\
\ar@{-}[5,3]&&&&&\ar@{-}[1,0]&&\\
&&&&&\ar@{-}`d/4pt [1,2][1,2]&&\\
&&&&&&&\\
&&&&&\ar@{-}[1,1]+<-0.125pc,0.0pc> \ar@{-}[1,1]+<0.0pc,0.125pc>&&\ar@{-}[2,-2]\\
&&&&&&&\\
&&&\ar@{-}[1,1]+<-0.125pc,0.0pc> \ar@{-}[1,1]+<0.0pc,0.125pc>&&\ar@{-}[2,-2]
       &&\ar@{-}[-1,-1]+<0.125pc,0.0pc>\ar@{-}[-1,-1]+<0.0pc,-0.125pc>\ar@{-}[2,0]\\
&&&&&&&\\
&&&\ar@{-}[2,0]&&\ar@{-}[-1,-1]+<0.125pc,0.0pc>\ar@{-}[-1,-1]+<0.0pc,-0.125pc>
     \ar@{-}`d/4pt [1,1] `[0,2] [0,2]&&\\
&&&&&&\ar@{-}[1,0]{\bullet}&\\
&&&\ar@{-}`d/4pt [1,1]+<0.2pc,0pc> `[0,3] [0,3]&&&&\\
&&&&\save[]+<0.2pc,0pc> \Drop{}\ar@{-}[1,0]+<0.2pc,0pc>\restore&&&\\
&&&&&&&
}}
\grow{\xymatrix{
\\\\\\\\\\\\\\\\
\txt{.}
}}
$$
So, $f$ satisfies the twisted module condition if and only if
$$
\spreaddiagramcolumns{-1.6pc}\spreaddiagramrows{-1.6pc}
\objectmargin{0.0pc}\objectwidth{0.0pc}
\def\objectstyle{\sssize}
\def\labelstyle{\sssize}
\grow{\xymatrix@!0{
&\ar@{-}[1,0]&&&&\ar@{-}[1,0]&&\ar@{-}[8,0]\\
&\ar@{-}`l/4pt [1,-1] [1,-1] \ar@{-}`r [1,1] [1,1]&&&&
    \ar@{-}`l/4pt [1,-1] [1,-1] \ar@{-}`r [1,1] [1,1]&&\\
\ar@{-}[3,0]&&\ar@{-}[1,1]+<-0.1pc,0.1pc> && \ar@{-}[2,-2]&&\ar@{-}[2,0]&\\
&&&&&&&\\
&&\ar@{-}[4,3]&&\ar@{-}[-1,-1]+<0.1pc,-0.1pc>\ar@{-}`d/4pt [1,1] `[0,2] [0,2] &&&\\
\ar@{-}[5,3]&&&&&\ar@{-}[1,0]&&\\
&&&&&\ar@{-}`d/4pt [1,2][1,2]&&\\
&&&&&&&\\
&&&&&\ar@{-}[1,1]+<-0.125pc,0.0pc> \ar@{-}[1,1]+<0.0pc,0.125pc>&&\ar@{-}[2,-2]\\
&&&&&&&\\
&&&\ar@{-}[1,1]+<-0.125pc,0.0pc> \ar@{-}[1,1]+<0.0pc,0.125pc>&&\ar@{-}[2,-2]
       &&\ar@{-}[-1,-1]+<0.125pc,0.0pc>\ar@{-}[-1,-1]+<0.0pc,-0.125pc>\ar@{-}[2,0]\\
&&&&&&&\\
&&&\ar@{-}[2,0]&&\ar@{-}[-1,-1]+<0.125pc,0.0pc>\ar@{-}[-1,-1]+<0.0pc,-0.125pc>
     \ar@{-}`d/4pt [1,1] `[0,2] [0,2]&&\\
&&&&&&\ar@{-}[1,0]{\bullet}&\\
&&&\ar@{-}`d/4pt [1,1]+<0.2pc,0pc> `[0,3] [0,3]&&&&\\
&&&&\save[]+<0.2pc,0pc> \Drop{}\ar@{-}[1,0]+<0.2pc,0pc>\restore&&&\\
&&&&&&& }}
\grow{\xymatrix{
\\\\\\\\\\\\\\\\
\txt{=}
}}
\grow{\xymatrix@!0{
\\\\
  &\ar@{-}[1,0]&&&&\ar@{-}[1,0]&&\ar@{-}[4,0]\\
  & \ar@{-}`l/4pt [1,-1] [1,-1] \ar@{-}`r [1,1] [1,1]&&&&\ar@{-}`l/4pt [1,-1] [1,-1]
                                                               \ar@{-}`r [1,1] [1,1]&&\\
  \ar@{-}[2,0]&&\ar@{-}[1,1]+<-0.1pc,0.1pc> && \ar@{-}[2,-2]&&\ar@{-}[2,0]&\\
  &&&&&&&\\
  \ar@{-}[2,1]&&\ar@{-}[2,1]&&\ar@{-}[-1,-1]+<0.1pc,-0.1pc>\ar@{-}`d/4pt [1,1] `[0,2] [0,2]
                                                                              &&&\ar@{-}[2,0]\\
  &&&&&\ar@{-}[1,0]&&\\
  &\ar@{-}`d/4pt [1,1] `[0,2] [0,2]&&&&\ar@{-}`d/4pt [1,2][1,2] && \ar@{-}[4,0]\\
  && {\bullet}\ar@{-}'[1,0][3,1]&&&&&\\
  &&&&&&&\\
  &&&&&&&\\
  &&&\ar@{-}`d/4pt [1,2] `[0,4] [0,4] &&&&\\
  &&&&&\ar@{-}[1,0]&&\\
  &&&&&&
}}
\grow{\xymatrix{
\\\\\\\\\\\\\\\\
\txt{.} }}
$$
But this happens if and only if $f$ is $s$-central, since $(H^2\ot\rho\xcirc \mu_H) \xcirc (\Delta_{H^2}\ot A)$ is
bijective with inverse $(H^2\ot\rho\xcirc S\xcirc \mu_H) \xcirc (\Delta_{H^2}\ot A)$.
\end{proof}

\begin{theorem} Assume that $H$ is a cocommutative braided Hopf algebra. Then there is a bijective correspondence
between $\Ho^2(H,A,s)$ and the equivalence classes of crossed products of $(A,s)$ with $H$, whose cocycle is
convolution invertible.
\end{theorem}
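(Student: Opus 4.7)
The plan is to establish a bijection between $2$-cocycles of the normalized braided Sweedler complex $\Reg^s_+(H^*,A)$ and those normal, convolution invertible cocycles $f\colon H^2\to A$ compatible with $s$ and satisfying the twisted module condition, and then to show that the coboundary equivalence corresponds precisely to equivalence of crossed products. Since the normalized subcomplex computes $\Ho^*(H,A,s)$, this will deliver the stated bijection on $\Ho^2(H,A,s)$, via the map $[f]\mapsto [A\#_f H]$.

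For the first step, let $f\in \Reg^s_+(H^2,A)$ be a $2$-cocycle. Convolution invertibility and compatibility with $s$ are built into the definition of $\Reg^s$; normality is the defining condition of the normalized subcomplex; and the twisted module condition is exactly the $s$-centrality hypothesis by Proposition~\ref{pr3.4}. So it only remains to identify the equation $D^2(f)=1$ with the cocycle condition of Definition~\ref{de3.1}. Since $H$ is cocommutative, $\Reg^s(H^3,A)$ is abelian (Notation~\ref{no1.19 conseq de ser psi-central y comp with psi} together with Theorem~\ref{th1.21}), so $D^2(f)=1$ rewrites as
$$
\delta^0(f)*\delta^2(f)=\delta^1(f)*\delta^3(f).
$$
Unwinding the four coface operators, the left-hand side is the convolution of $(h_1\ot h_2\ot h_3)\mapsto h_1\cdot f(h_2\ot h_3)$ with $(h_1\ot h_2\ot h_3)\mapsto f(h_1\ot h_2h_3)$, and the right-hand side is the convolution of $(h_1\ot h_2\ot h_3)\mapsto f(h_1h_2\ot h_3)$ with $(h_1\ot h_2\ot h_3)\mapsto f(h_1\ot h_2)\ep(h_3)$. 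One then uses the cocommutativity of $H$ together with the $s$-central and $s$-compatible properties of $f$ to absorb the braidings appearing in $\Delta_{H^3}$ and in the diagrammatic form of Definition~\ref{de3.1}, exactly as was done in Section~2 for the $1$-cocycle computation, concluding that both expressions coincide. Conversely, any invertible, normal, $s$-compatible cocycle satisfying the twisted module condition lies in $\Reg^s_+(H^2,A)$ and satisfies $D^2(f)=1$, so the correspondence is bijective at the level of cochains.

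For the coboundary side, suppose $f,f'$ are $2$-cocycles and $u\in \Reg^s_+(H,A)$. By Proposition~\ref{pr3.3}, $s$-centrality of $u$ is exactly condition~(3) recalled from \cite[Corollary~12.4]{G-G1}; conditions~(1) and (2) (normality and $s$-compatibility) are part of the definition of $\Reg^s_+(H,A)$; and condition~(4') becomes $\delta^0(u)*\delta^2(u)*f'=f*\delta^1(u)$ in the abelian group $\Reg^s(H^2,A)$, which is equivalent to $f'=(D^1(u))^{-1}*f$. Hence equivalences $A\#_f H\simeq A\#_{f'}H$ are in bijection with coboundaries between $f$ and $f'$, and the assignment $[f]\mapsto [A\#_f H]$ is a well-defined bijection on $\Ho^2(H,A,s)$. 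The main obstacle is the diagrammatic identification in the first step: one must carefully use cocommutativity together with the $s$-central and $s$-compatible properties to dissolve the braidings present both in $\Delta_{H^3}$ and in Definition~\ref{de3.1}, reducing the braided $2$-cocycle equation to its plain alternating form, in exactly the spirit of the computations carried out in Sections~2 and 3 of the paper.
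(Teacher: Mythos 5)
Your proposal is correct and follows essentially the same route as the paper: Proposition~\ref{pr3.4} identifies $\Reg_+^s(H^2,A)$ with the invertible normal $s$-compatible maps satisfying the twisted module condition, the equation $D^2(f)=1$ is rewritten as $(\delta^0*\delta^2)(f)=(\delta^3*\delta^1)(f)$ and matched with the cocycle condition of Definition~\ref{de3.1}, and Proposition~\ref{pr3.3} together with condition~(4') (i.e.\ $(\delta^0*\delta^2)(u)*f'=f*\delta^1(u)$) identifies equivalence of crossed products with $f*{f'}^{-1}$ being a coboundary. No essential difference from the paper's argument.
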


\begin{proof} By Proposition~\ref{pr3.4}, the elements of $\Reg_+^s(H^2,A)$ are the convolution invertible normal
maps, compatible with $s$, satisfying the twisted module condition. It is easy to see that an
element $f\in \Reg_+^s(H^2,A)$ is a cocycle in the sense of Definition~\ref{de3.1} if and only if
$(\delta^0* \delta^2)(f) = (\delta^3*\delta^1)(f)$. That is, if and only if $f$ is a $2$-cocycle
of the complex $(\Reg_+^s(H^*,A),D^*)$. It remains to check that two crossed products $A\#_f H$
and $A\#_{f'} H$ are equivalent if and only if $f*{f'}^{-1}$ is a coboundary in the complex
$(\Reg_+^s(H^*,A),D^*)$. By Proposition~\ref{pr3.3}, we know that the elements of $\Reg_+^s(H,A)$
are the convolution invertible normal maps, compatible with $s$, that satisfy condition~$(3)$ in
the discussion preceding Proposition~\ref{pr3.3}. It is easy to see that there exists $u\in
\Reg_+^s(H,A)$ such that $(\delta^0 * \delta^2)(u)*f' = f*\delta^1(u)$ if and only if condition
$(4')$ is also satisfied. That is, if and only if $f*{f'}^{-1}$ is the coboundary of $u$ in the
complex $(\Reg_+^s(H^*,A),D^*)$.
\end{proof}

\section{Comparison with a variant of group cohomology}
Let $G$ be a group. We will say that a transposition $s\colon k[G]\ot A\to A\ot k[G]$ is {\em induced {\em by an}
$\Aut(G)^{\op}$-gradation} $A = \bigoplus_{\zeta\in \Aut(G)} A_{\zeta}$ of $A$ if $s(g\ot a)=a\ot \zeta(g)$ for
all $g\in G$ and $a\in A_{\zeta}$. For instance, by \cite[Theorem~4.14]{G-G1}, if $G$ is finitely generated, then
each transposition of $k[G]$ on $A$ is induced by an $\Aut(G)^{\op}$-gradation on $A$, and this gradation is
unique. Let ${}^s\! A$ be as in Section~2. It is easy to check that ${}^s\! A = A_{\ide}$. In this section, we
show that if $G$ is a group and $(A,s)$ is a left $k[G]$-module algebra, whose transposition $s$ is induced by an
$\Aut(G)^{\op}$-gradation of $A$, then, the braided Sweedler cohomology of $k[G]$ in $(A,s)$ coincide with a
variation of the group cohomology of $G$ with coefficients in the abelian group $\Z(A_{\ide})^{\times}$ of units
of the center of $A_{\ide}$. In order to make out this we first recall some well-known concepts and notations and
we introduce other ones.

\renewcommand{\descriptionlabel}[1] {\textsf{#1}}

\begin{description}

\smallskip

\item[a:] From \cite[Example 9.8]{G-G1}, we know that the action of $k[G]$ on $(A,s)$ satisfies

\begin{enumerate}

\smallskip

\item $g\cdot(ab)= (g\cdot a)(\zeta(g)\cdot b)$ if $a\in A_{\zeta}$ and $g\in G$,

\smallskip

\item $g\cdot 1=1$, for all $g\in G$,

\smallskip

\item $1\cdot a = a$, for all $a\in A$,

\smallskip

\item $g\cdot a \in A_{\zeta}$, for all $g\in G$, $a\in A_{\zeta}$,

\smallskip

\end{enumerate}
In particular, $k[G]$ acts on $A_{\ide}$ in the classical sense. From this it follows easily that
the action of $k[G]$ carry $\Z(A_{\ide})^{\times}$ into itself.

\smallskip

\item[b:] The automorphism group $\Aut(G)$ acts on $G$ via $\zeta\ot a\mapsto \zeta(g)$. Let
$$
G\rtimes \Aut(G)
$$
be the associated semidirect product and let ${}_{G\rtimes \Aut(G)}\Mod$ be the category of left $k[G\rtimes
\Aut(G)]$-modules. Let $\digamma\colon {}_{G\rtimes \Aut(G)}\Mod \rightarrow \Ab$ be the contravariant additive
functor defined on objects and arrows, by

\begin{itemize}

\smallskip

\item $\digamma(M)$ is the space of $k[G]$-linear maps $\varphi \colon M \rightarrow \Z(A_{\ide})^{\times}$, such
that
$$
\quad\qquad\varphi(m)a = a\varphi(\zeta\cdot m) \quad\text{for all $m\in M$, $\zeta\in \Aut(G)$
and $a\in A_{\zeta}$},
$$

\smallskip

\item $\digamma(\alpha)(\varphi):= \varphi\xcirc \alpha$,

\smallskip

\end{itemize}
respectively.

\end{description}
Note that $k$ is a left $k[G \rtimes \Aut(G)]$-module via the trivial action and that $k[G]^{n+1}$
is a left $k[G\rtimes \Aut(G)]$-module via
$$
(g,\zeta)\cdot (g_0\ot \cdots \ot g_n) = g \zeta(g_0)\ot \zeta(g_1)\ot \cdots \ot \zeta(g_n),
$$
for all $n\ge 0$. Moreover, each $k[G]^{n+1}$ is projective relative to the algebra extension $k[\Aut(G)]
\hookrightarrow k[G \rtimes \Aut(G)]$.

\begin{theorem}\label{th4.1} Let $R^n \digamma$ be the $n$-th right derived functor of $\digamma$ relative to the
algebra extension $k[\Aut(G)]\hookrightarrow k[G \rtimes \Aut(G)]$. The $n$-th braided Sweedler cohomology group
$\Ho^n(k[G],A,s)$ is canonically isomorphic to $R^n \digamma (k)$, for all $n\ge 0$.
\end{theorem}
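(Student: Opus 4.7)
The plan is to produce an explicit $k[\Aut(G)]$-split resolution of $k$ in ${}_{G\rtimes\Aut(G)}\Mod$ by the modules $k[G]^{n+1}$, apply the contravariant functor $\digamma$ to it, and identify the resulting cochain complex degree-by-degree and differential-by-differential with the braided Sweedler complex $(\Reg^s(k[G]^*,A),D^*)$.

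First I would set up the resolution. Take the standard bar complex
\[
\cdots \longrightarrow k[G]^{n+1} \xrightarrow{d_n} k[G]^n \longrightarrow \cdots \longrightarrow k[G] \xrightarrow{\epsilon} k \longrightarrow 0,
\]
with $d_n=\sum_{i=0}^n(-1)^i\partial_i$ the alternating sum of the face maps from Section~2. Since every $\zeta\in\Aut(G)$ satisfies $\zeta(1)=1$ and $\zeta(gh)=\zeta(g)\zeta(h)$, each $\partial_i$ is $k[G\rtimes\Aut(G)]$-linear, and the standard contracting homotopy $s_n(g_0\ot\cdots\ot g_n)=1\ot g_0\ot\cdots\ot g_n$ is $k[\Aut(G)]$-linear, so the complex is $k[\Aut(G)]$-split exact. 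Moreover, the identification $(g_0,1)\ot(g_1\ot\cdots\ot g_n)\leftrightarrow g_0\ot g_1\ot\cdots\ot g_n$ yields an isomorphism
\[
k[G\rtimes\Aut(G)]\ot_{k[\Aut(G)]}k[G]^n \cong k[G]^{n+1}
\]
of left $k[G\rtimes\Aut(G)]$-modules (with $\Aut(G)$ acting diagonally on the right-hand $k[G]^n$). Hence each $k[G]^{n+1}$ is induced from $k[\Aut(G)]$, so in particular projective relative to $k[\Aut(G)]\hookrightarrow k[G\rtimes\Aut(G)]$. It follows that $R^n\digamma(k)$ is the $n$-th cohomology of the complex $\digamma(k[G]^{*+1})$.

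Second, the plan is to establish a canonical group isomorphism $\digamma(k[G]^{n+1})\cong\Reg^s(k[G]^n,A)$. Unravelling Proposition~\ref{pr1.22} for the transposition induced by the $\Aut(G)^{\op}$-gradation shows that $f\in\Reg^s(k[G]^n,A)$ is precisely a map whose values lie in $\Z(A_{\ide})^{\times}$ and which satisfies
\[
f(g_1\ot\cdots\ot g_n)\,a = a\,f(\zeta(g_1)\ot\cdots\ot\zeta(g_n)) \quad \text{for all } \zeta\in\Aut(G),\ a\in A_{\zeta}.
\]
On the other hand, a $k[G]$-linear $\varphi\in\digamma(k[G]^{n+1})$ is determined by its restriction $\hat\varphi(g_1\ot\cdots\ot g_n):=\varphi(1\ot g_1\ot\cdots\ot g_n)$, and the defining compatibility $\varphi(m)a=a\varphi(\zeta\cdot m)$ translates, upon specializing to $m=1\ot g_1\ot\cdots\ot g_n$ and using $\zeta(1)=1$, to the identity above for $\hat\varphi$. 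The assignment $\varphi\mapsto\hat\varphi$ is a group isomorphism because the group law on both sides is pointwise multiplication in $\Z(A_{\ide})^{\times}$; for $\Reg^s(k[G]^n,A)$ this holds because $\Delta(g)=g\ot g$ in $k[G]$ makes convolution coincide with pointwise multiplication.

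Third, under this identification the face maps $\partial_i$ of the bar resolution translate into exactly the coboundary operators $\delta^i$ described after Lemma~\ref{le2.1}. Since $\Z(A_{\ide})^{\times}$ is written multiplicatively, the bar differential $\digamma(d_n)(\varphi)=\varphi\xcirc d_n$ becomes the alternating product $\prod_{i=0}^{n}\delta^i(\varphi)^{(-1)^i}$, which is precisely the Sweedler coboundary $D^n$. Passing to cohomology therefore yields the canonical isomorphism $\Ho^n(k[G],A,s)\cong R^n\digamma(k)$. The main obstacle is the second step: verifying that the abstract $k[G]$-linearity together with the intertwining built into the definition of $\digamma$ repackages exactly as the three defining properties of $\Reg^s(k[G]^n,A)$ (compatibility with $s$, $s$-centrality, and convolution invertibility), uniformly in $n$ so that the identification is automatically compatible with the face maps used to build the differentials.
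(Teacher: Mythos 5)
Your proposal is correct and takes essentially the same route as the paper: compute the relative derived functor via the $k[\Aut(G)]$-split bar resolution of $k$, identify $\digamma(k[G]^{n+1})$ with $\Reg^s(k[G]^n,A)$ (your restriction $\varphi\mapsto\hat\varphi$ at $g_0=1$ is just the inverse of the paper's isomorphism $\gimel$), and check that the face maps carry over to the coboundaries $\delta^i$. The only difference is that you spell out the relative projectivity of $k[G]^{n+1}$ via induction from $k[\Aut(G)]$, a fact the paper simply asserts before the theorem.
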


\begin{proof} It is immediate that a map $\varphi\colon k\to A$ is $s$-central, compatible with $s$ and
convolution invertible if and only if $\ima(\varphi)\sub (A_{\ide}\cap \Z(A))^{\times}$. Assume that $n>0$ and let
$\varphi\colon k[G]^n\to A$. It is easy to check that:

\begin{enumerate}

\smallskip

\item $\varphi$ is compatible with $s$ if and only if $\ima(\varphi)\subseteq A_{\ide}$.

\smallskip

\item $\varphi$ is $s$-central if and only if $\varphi(g_1\ot \cdots \ot g_n)a = a\varphi\bigl(
\zeta(g_1)\ot \cdots\ot \zeta(g_n)\bigr)$ for all $g_1,\dots, g_n\in G$, $\zeta\in \Aut(G)$ and
$a\in A_{\zeta}$.

\smallskip

\item $\varphi$ is convolution invertible if and only if $\varphi(g_1\ot\cdots\ot g_n)\in A^{\times}$ for all
$g_1,\dots,g_n\in G$.

\smallskip

\end{enumerate}
In particular this implies that $\ima(\varphi)\subseteq \Z(A_{\ide})^{\times}$. From these facts
it follows easily that the map
$$
\gimel\colon \Reg^s(k[G]^n,A) \rightarrow \digamma(k[G]^{n+1}),
$$
defined by
$$
\gimel(\varphi)(g_0\ot \cdots \ot g_n) =g_0\cdot \varphi(g_1\ot \cdots \ot g_n)\text{ for all $g_0,\dots, g_n\in
G$,}
$$
is an isomorphism. So, by transporting of structure, we obtain a cochain complex isomorphic to
$(\Reg^s(k[G]^*,A),D^*)$, whose $n$-th cochain group is $\digamma(k[G]^{n+1})$. Consider now the
non normalized Barr resolution $B^u_*(G)$ of $k$ as a left $k[G]$-module. It is immediate that the
canonical contraction homotopy of $B^u_*(G)\rightarrow k$ is $k[\Aut(G)]$-linear. Since each
$k[G]^{n+1}$ is projective relative to the algebra extension $k[\Aut(G)] \hookrightarrow k[G
\rtimes \Aut(G)]$, to finish the proof it suffices to notice that applying $\digamma$ to
$B^u_*(G)$ one obtain the same complex as before.
\end{proof}

\begin{example}\label{ex4.2} If $s$ is the flip, or (which is equivalent) $A_{\zeta}=0$ for all $\zeta\neq \ide$, then
$\digamma(B^u_*(G))$ is the canonical non normalized complex computing the group cohomology of $k$ with
coefficients in $\Z(A)^{\times}$. So, Theorem~\ref{th4.1} generalizes \cite[Theorem~3.1]{Sw}
\end{example}

\begin{remark}\label{re4.3} Assume that $\Z(A_{\ide})\subseteq \Z(A)$ and that $A$ is strongly
$\Aut(G)^{\op}$-graded. So, for each $\zeta\in \Aut(G)$, there exist $a_1,\dots,a_l\in A_{\zeta}$
and $b_1,\dots,b_l\in A_{\zeta^{-1}}$ such that $\sum a_ib_i=1$. If $\varphi\colon
k[G]^{n+1}\rightarrow \Z(A)_{\ide}$ is $s$-central, then, for each $g_0,\dots,g_n\in G$ and
$\zeta\in \Aut(G)$, we have
\begin{align*}
\varphi(g_0\otimes \cdots\otimes g_n) & = \sum_{i=1}^l \varphi(g_0\otimes \cdots\otimes g_n)a_ib_i\\
& = \sum_{i=1}^l a_i \varphi\bigl(\zeta(g_0)\otimes \cdots\otimes \zeta(g_n)\bigr)b_i\\
& = \varphi\bigl(\zeta(g_0)\otimes \cdots \otimes \zeta(g_n)\bigr),
\end{align*}
where the second equality follows from the fact that $\varphi$ is $s$-central and the third one
from the fact that $\varphi\bigl(\zeta(g_0)\otimes \cdots\otimes \zeta(g_n)\bigr) \in \Z(A)$.
Conversely, if $\varphi$ satisfies the above equality, then $\varphi$ is $s$-central.
\end{remark}

\section{Comparison with a variant of Lie cohomology}
In this section $k$ is a characteristic zero field, $H$ is the enveloping algebra of a lie algebra
$L$ and $(A,s)$ is a left $H$-module algebra. Using that the braid of $H$ is the flip and $H$ is
cocommutative, it is easy to check that ${}^s\! A$ and its center $\Z({}^s\! A)$ are left
$H$-module algebras and that $f\colon H^n\to A$ is compatible with $s$ if and only if $\ima(f)\sub
{}^s\! A$. Assume that $f$ is also $s$-central. Then, $\ima(f)$ is included in $\Z({}^s\! A)$.

\smallskip

For each $n\ge 0$, let
$$
C^n_s := \{f\in \Hom_k^s(H^n,A): \text{$f(x_1\ot\cdots\ot x_n) = 0$ if some $x_i\in k$}\}.
$$
Let $\de^{n+1}\colon C^n_s\to C^{n+1}_s$ be the map defined by
\begin{align*}
\de^{n+1}(f)(x_1\ot\cdots\ot x_{n+1}) &= x_1\cdot f(x_2\ot\cdots\ot x_{n+1})\\
& +\sum_{i=1}^n (-1)^i f(x_1\ot\cdots\ot x_ix_{i+1}\ot\cdots\ot x_{n+1})\\
&+ (-1)^{n+1} f(x_1\ot\cdots\ot x_n)\ep(x_{n+1}).
\end{align*}
It is easy to check that $(C^*_s,\de^*)$ is a cochain complex. Indeed, it is immediate that
$(C^*_s,\de^*)$ is a subcomplex of the normalized Hochschild cochain complex $(C^*(H,\Z({}^s\!
A)^+),\de^*)$ of $H$ with coefficients in $\Z({}^s\! A)^+$, where $\Z({}^s\! A)^+$ is $\Z({}^s\!
A)$, endowed with the $H$-bimodule structure defined by \hbox{$h\cdot a \cdot l = h\ep(l)\cdot
a$}. The $n$-th cohomology group of $(C^*_s,\de^*)$ will be denoted $\Ho^n_+(H,A,s)$.

\smallskip

Let $\tau\colon H\ot \Z({}^s\! A) \to \Z({}^s\! A)\ot H$ be the flip. In the proof
of~\cite[Theorem~4.1]{Sw}, Sweedler shows that, for each $n\ge 1$, the map
\begin{equation*}
\exp\colon C^n(H,\Z({}^s\! A)^+)\to \Reg^{\tau}_+(H^n,\Z({}^s\! A)),
\end{equation*}
defined by $\exp(f) = \sum_{i=0}^{\infty}\frac{1}{i!} f^i$, where $f^i$ denotes the $i$-th
convolution power of $f$, is bijective and commutes with the coboundary maps. The inverse is the
map $\log(g) = \sum_{i=1}^{\infty}(-1)^i\frac{1}{i}(g-e)^i$, where $e$ is the unit of
$\Reg^{\tau}_+(H^n,\Z({}^s\! A))$. Using that $\Hom_k^s(H^n,A)$ is a subalgebra of
$\Hom_k(H^n,A)$, it is easy to see that $\exp$ induce an isomorphism from $C^n_s$ to
$\Reg^s_+(H^n,A,s)$. So we have the following result:

\begin{theorem}\label{th5.1} $\Ho^n(H,A,s) = \Ho^n_+(H,A,s)$ for each $n\ge 2$.
\end{theorem}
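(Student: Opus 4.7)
The statement to prove is that, for $H=U(L)$ and $n\ge 2$, the unnormalized braided Sweedler cohomology $\Ho^n(H,A,s)$ (built from the multiplicative complex $(\Reg^s(H^*,A),D^*)$) coincides with $\Ho^n_+(H,A,s):=H^n(C^*_s,\delta^*)$, the cohomology of the additive Hochschild-type complex. My plan is to factor this identification through the normalized multiplicative subcomplex $(\Reg^s_+(H^*,A),D^*)$ already introduced in Section~2, establishing the chain
\[
H^n(C^*_s,\delta^*) \;\cong\; H^n\bigl(\Reg^s_+(H^*,A),D^*\bigr) \;\cong\; \Ho^n(H,A,s),
\]
in which the first isomorphism comes from Sweedler's $\exp/\log$ bijection and the second from the standard cosimplicial normalization quasi-isomorphism already recalled in Section~2.

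For the first link I invoke the $\exp/\log$ iso stated in the paragraph immediately preceding the theorem: for every $n\ge 1$, $\exp\colon C^n(H,\Z({}^s\!A)^+)\to \Reg^{\tau}_+(H^n,\Z({}^s\!A))$ is a cochain iso with inverse $\log$. I must verify that $\exp$ and $\log$ restrict to mutually inverse bijections $C^n_s\leftrightarrow \Reg^s_+(H^n,A)$. Since $\Hom_k^s(H^n,A)$ is a subalgebra of $\Hom_k(H^n,A)$ by Proposition~\ref{pr1.18 subalgebras de Hom_k(C,A)}, every convolution power of an element $f\in C^n_s\subseteq \Hom_k^s(H^n,A)$ stays compatible with $s$ and $s$-central; hence so do the series $\exp(f)$ and $\log(g)$. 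Moreover, $H$ being cocommutative with flip braid gives $\De(1)=1\ot 1$, so the vanishing of $f$ on a tensor with a slot equal to $1$ propagates to each convolution power $f^i$ with $i\ge 1$, forcing $\exp(f)$ to equal $\eta\ep$ on such tensors, i.e., $\exp(f)\in\Reg^s_+(H^n,A)$. A symmetric argument handles $\log$, and the commutation with coboundaries is inherited from the classical statement.

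For the second link, the inclusion $(\Reg^s_+(H^*,A),D^*)\hookrightarrow(\Reg^s(H^*,A),D^*)$ is a quasi-isomorphism: under the cosimplicial isomorphism $\iota_*$ of Lemma~\ref{le2.1} it corresponds to the normalized-to-unnormalized inclusion of the cosimplicial complex $\Reg^s_H(H^{*+1},A)$, which Section~2 already recognized as a quasi-isomorphism via the general cosimplicial normalization theorem. Chaining the two links gives $\Ho^n_+(H,A,s)\cong\Ho^n(H,A,s)$ in every degree $n$ for which the entire computation of $H^n(C^*_s,\delta^*)$ lies in the range of the exp iso; since exp is available only from degree $1$ upward and $H^n$ uses $\delta^{n-1}$ and $\delta^n$, this forces $n-1\ge 1$, which is exactly the restriction $n\ge 2$ in the statement.

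The main obstacle is the package of verifications in the first step, ensuring that $\exp$ preserves all three side-conditions simultaneously ($s$-compatibility, $s$-centrality, and normalization); each is essentially formal once one invokes Proposition~\ref{pr1.18 subalgebras de Hom_k(C,A)} and the cocommutativity of $H$, but one must also appeal to Sweedler's coradical-filtration argument to justify pointwise convergence of the series $\exp(f)$ and $\log(g)$. Once this is in place, the remainder is clean bookkeeping of where the quasi-isomorphism applies and which degrees are reached by the $\exp$ translation.
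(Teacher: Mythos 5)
Your proposal is correct and follows essentially the same route as the paper: the paper likewise obtains the result by restricting Sweedler's $\exp/\log$ isomorphism to $C^n_s\to\Reg^s_+(H^n,A)$ using the subalgebra property of $\Hom_k^s(H^n,A)$, and then invokes the normalization quasi-isomorphism from Section~2, with the bound $n\ge 2$ arising exactly as you say because $\exp$ is only available in degrees $\ge 1$. Your write-up merely spells out the verifications the paper labels ``easy to see.''
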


As usual, we let ${}_H\mathcal{M}_H$ denote the category of all the $H$-bimodules and we let
$\mathcal{V}ect$ denote the category of all the $k$-vector spaces.

\begin{definition}\label{de5.2} Let $(A,s)$ be a left $H$-module algebra and let $M$ be a
$k$-vector space. A map $\phi\colon M\ot A \to A\ot M$ is a {\em transposition} of $M$ on $(A,s)$
if it satisfies the following conditions:

\renewcommand{\descriptionlabel}[1]{\textrm{#1}}
\begin{description}

\smallskip

\item[(1)]  $\phi$ is compatible with the algebra structure of $A$. That is,
\spreaddiagramcolumns{-1.6pc} \spreaddiagramrows{-1.6pc}
\objectmargin{0.0pc}\objectwidth{0.0pc}
\def\objectstyle{\sssize}
\def\labelstyle{\sssize}
$$
\grow{ \xymatrix@!0{
\\
\save\go+<0pt,2pt>\Drop{M}\restore \ar@{-}[1,0]&&\save\go+<0pt,2pt>\Drop{A}\restore
     \ar@{-}`d/4pt [1,1] `[0,2] [0,2]&&\save\go+<0pt,2pt>\Drop{A}\restore\\
\ar@{-}[1,1]&&&\ar@{-}[1,0]&\\
&\ar@{-}[1,1]+<-0.125pc,0.0pc> \ar@{-}[1,1]+<0.0pc,0.125pc>&&\ar@{-}[2,-2]&\\
&&&&\\
&&&\ar@{-}[-1,-1]+<0.125pc,0.0pc>\ar@{-}[-1,-1]+<0.0pc,-0.125pc>& }}
\grow{ \xymatrix@!0{
\\\\\\
\txt{=} }}
\grow{ \xymatrix@!0{
\save\go+<0pt,2pt>\Drop{M}\restore \ar@{-}[1,1]+<-0.125pc,0.0pc> \ar@{-}[1,1]+<0.0pc,0.125pc>
    &&\save\go+<0pt,2pt>\Drop{A}\restore \ar@{-}[2,-2]
    &&\save\go+<0pt,2pt>\Drop{A}\restore\ar@{-}[2,0]\\
&&&&\\
\ar@{-}[2,0]&&\ar@{-}[-1,-1]+<0.125pc,0.0pc>\ar@{-}[-1,-1]+<0.0pc,-0.125pc>
  \ar@{-}[1,1]+<-0.125pc,0.0pc> \ar@{-}[1,1]+<0.0pc,0.125pc>&&\ar@{-}[2,-2]\\
&&&&\\
\ar@{-}`d/4pt [1,1] `[0,2] [0,2]&&&&\ar@{-}[-1,-1]+<0.125pc,0.0pc>
    \ar@{-}[-1,-1]+<0.0pc,-0.125pc>\ar@{-}[2,0]\\
&\ar@{-}[1,0]&&&\\
&&&&
}}
\qquad\grow{\xymatrix@!0{\\\\\\
\txt{and}
}}\qquad
\grow{ \xymatrix@!0{
\\
\save\go+<0pt,2pt>\Drop{M}\restore \ar@{-}[2,0]&&\save\go+<0pt,-1.6pt>\Drop{\circ}\restore\\
&&\ar@{-}[-1,0]+<0pt,-2.5pt> \ar@{-}[1,0]\\
\ar@{-}[1,1]+<-0.125pc,0.0pc> \ar@{-}[1,1]+<0.0pc,0.125pc>&&\ar@{-}[2,-2]\\
&&\\
&&\ar@{-}[-1,-1]+<0.125pc,0.0pc>\ar@{-}[-1,-1]+<0.0pc,-0.125pc>
}}
\grow{ \xymatrix@!0{
\\\\\\
\txt{=} }}
\grow{ \xymatrix@!0{
\\\\
\save\go+<0pt,-1.6pt>\Drop{\circ}\restore &&\save\go+<0pt,2pt>\Drop{M}\restore \ar@{-}[2,0]\\
\ar@{-}[-1,0]+<0pt,-2.5pt> \ar@{-}[1,0]\\
&&
}}
\grow{ \xymatrix@!0{
\\\\\\
\txt{.}
}}
$$
\end{description}

\smallskip

\renewcommand{\descriptionlabel}[1]
{\textrm{#1}}
\begin{description}

\item[(2)]  $\phi$ is compatible with the left action of $H$ on $A$. That is,
\spreaddiagramcolumns{-1.6pc} \spreaddiagramrows{-1.6pc}
\objectmargin{0.0pc}\objectwidth{0.0pc}
\def\objectstyle{\sssize}
\def\labelstyle{\sssize}
$$
\grow{\xymatrix@!0{
\\
\save\go+<0pt,2pt>\Drop{M}\restore \ar@{-}[2,2]&&\save\go+<0pt,2pt>
  \Drop{H} \restore \ar@{-}`d/4pt [1,2][1,2]&&\save\go+<0pt,2pt> \Drop{A} \restore
  \ar@{-}[2,0]\\
&&&&\\
&&\ar@{-}[1,1]+<-0.125pc,0.0pc> \ar@{-}[1,1]+<0.0pc,0.125pc>&&\ar@{-}[2,-2]\\
&&&&\\
&&&&\ar@{-}[-1,-1]+<0.125pc,0.0pc>\ar@{-}[-1,-1]+<0.0pc,-0.125pc>
}}
\grow{ \xymatrix@!0{
\\\\\\
\txt{=}
}}
\grow{ \xymatrix@!0{ \save\go+<0pt,2pt>\Drop{M}\restore \ar@{-}[2,2]&&\save\go+<0pt,2pt>
  \Drop{H} \restore \ar@{-}[2,-2]&&\save\go+<0pt,2pt> \Drop{A} \restore
  \ar@{-}[2,0]\\
&&&&\\
\ar@{-}[2,0]&&\ar@{-}[1,1]+<-0.125pc,0.0pc> \ar@{-}[1,1]+<0.0pc,0.125pc>&&\ar@{-}[2,-2]\\
&&&&\\
\ar@{-}`d/4pt [1,2][1,2]&&\ar@{-}[2,0]&&\ar@{-}[-1,-1]+<0.125pc,0.0pc>
   \ar@{-}[-1,-1]+<0.0pc,-0.125pc>\ar@{-}[2,0]\\
&&&&\\
&&&&
}}
\grow{ \xymatrix@!0{\\\\ \txt{,\quad  where} }}
\grow{ \xymatrix@!0{
\\
\ar@{-}[2,2]&&\ar@{-}[2,-2]\\
&&\\
&&
}}
\grow{ \xymatrix@!0{
\\\\
\txt{is the flip.}
}}
$$
\end{description}

\smallskip

\renewcommand{\descriptionlabel}[1]
{\textrm{#1}}
\begin{description}

\item[(3)] The following equalities hold:
\spreaddiagramcolumns{-1.6pc} \spreaddiagramrows{-1.6pc}
\objectmargin{0.0pc}\objectwidth{0.0pc}
\def\objectstyle{\sssize}
\def\labelstyle{\sssize}
$$
\qquad
\grow{ \xymatrix@!0{
\save\go+<0pt,2pt>\Drop{H}\restore \ar@{-}[2,2]&&
   \save\go+<0pt,2pt>\Drop{M} \restore \ar@{-}[2,-2]&&
   \save\go+<0pt,2pt>\Drop{A}\restore \ar@{-}[2,0]\\
&&&&\\
\ar@{-}[2,0]&&\ar@{-}[1,1]+<-0.125pc,0.0pc> \ar@{-}[1,1]+<0.0pc,0.125pc>&&\ar@{-}[2,-2]\\
&&&&\\
\ar@{-}[1,1]+<-0.125pc,0.0pc> \ar@{-}[1,1]+<0.0pc,0.125pc>&&\ar@{-}[2,-2]
     &&\ar@{-}[-1,-1]+<0.125pc,0.0pc>\ar@{-}[-1,-1]+<0.0pc,-0.125pc>\ar@{-}[2,0]\\
&&&&\\
&&\ar@{-}[-1,-1]+<0.125pc,0.0pc>\ar@{-}[-1,-1]+<0.0pc,-0.125pc>&&
}}
\grow{ \xymatrix@!0{
\\\\\\
\txt{=}
}}
\grow{ \xymatrix@!0{
\save\go+<0pt,2pt>\Drop{H}\restore \ar@{-}[2,0]&& \save\go+<0pt,2pt>\Drop{M} \restore
   \ar@{-}[1,1]+<-0.125pc,0.0pc> \ar@{-}[1,1]+<0.0pc,0.125pc> &&
   \save\go+<0pt,2pt>\Drop{A}\restore \ar@{-}[2,-2]\\
&&&&\\
\ar@{-}[1,1]+<-0.125pc,0.0pc> \ar@{-}[1,1]+<0.0pc,0.125pc>&&\ar@{-}[2,-2]&&
     \ar@{-}[-1,-1]+<0.125pc,0.0pc>\ar@{-}[-1,-1]+<0.0pc,-0.125pc>\ar@{-}[2,0]\\
&&&&\\
\ar@{-}[2,0]&&\ar@{-}[-1,-1]+<0.125pc,0.0pc>\ar@{-}[-1,-1]+<0.0pc,-0.125pc>
    \ar@{-}[2,2]&&\ar@{-}[2,-2]\\
&&&&\\
&&&& }}
\grow{ \xymatrix@!0{\\\\\\ \txt{.}
}}
$$
\end{description}

\smallskip

When $M\in {}_H\mathcal{M}_H$ is also required that

\smallskip

\renewcommand{\descriptionlabel}[1]{\textrm{#1}}
\begin{description}

\item[(4)] $\phi$ is compatible with the right action of $H$ on $M$. That is,
\spreaddiagramcolumns{-1.6pc} \spreaddiagramrows{-1.6pc} \objectmargin{0.0pc}\objectwidth{0.0pc}
\def\objectstyle{\sssize}
\def\labelstyle{\sssize}
$$
\grow{ \xymatrix@!0{
\\
\save\go+<0pt,2pt>\Drop{M}\restore \ar@{-}[2,0]
   &&\save\go+<0pt,2pt>\Drop{H}\restore \ar@{-}`d/4pt [1,-2][1,-2]
   &&\save\go+<0pt,2pt>\Drop{A}\restore\ar@{-}[2,-2]\\
&&&&\\
\ar@{-}[1,1]+<-0.125pc,0.0pc> \ar@{-}[1,1]+<0.0pc,0.125pc>&&\ar@{-}[2,-2]&&\\
&&&&\\
&&\ar@{-}[-1,-1]+<0.125pc,0.0pc>\ar@{-}[-1,-1]+<0.0pc,-0.125pc>&& }}
\grow{ \xymatrix@!0{
\\\\\\
\txt{=} }}
\grow{ \xymatrix@!0{ \save\go+<0pt,2pt>\Drop{M}\restore \ar@{-}[2,0]
&&\save\go+<0pt,2pt>\Drop{H}\restore
   \ar@{-}[1,1]+<-0.125pc,0.0pc> \ar@{-}[1,1]+<0.0pc,0.125pc>
   &&\save\go+<0pt,2pt>\Drop{A}\restore \ar@{-}[2,-2]\\
&&&&\\
\ar@{-}[1,1]+<-0.125pc,0.0pc> \ar@{-}[1,1]+<0.0pc,0.125pc>&&\ar@{-}[2,-2]&&
  \ar@{-}[-1,-1]+<0.125pc,0.0pc>\ar@{-}[-1,-1]+<0.0pc,-0.125pc>\ar@{-}[2,0]\\
&&&&\\
\ar@{-}[2,0]&&\ar@{-}[-1,-1]+<0.125pc,0.0pc>\ar@{-}[-1,-1]+<0.0pc,-0.125pc>\ar@{-}[2,0]&&
   \ar@{-}`d/4pt [1,-2][1,-2]\\
&&&&\\
&&&& }}
\grow{ \xymatrix@!0{\\\\\\ \txt{,\quad  where} }}
\grow{ \xymatrix@!0{
\\\\
\ar@{-}[2,2]&&\ar@{-}[2,-2]\\
&&\\
&& }}
\grow{ \xymatrix@!0{
\\\\\\
\txt{is the flip.} }}
$$
\end{description}

\smallskip

\renewcommand{\descriptionlabel}[1]
{\textrm{#1}}
\begin{description}

\item[(5)] $\phi$ be compatible with the left action of $H$ on $M$. That is,
$$
\spreaddiagramcolumns{-1.6pc} \spreaddiagramrows{-1.6pc}
\objectmargin{0.0pc}\objectwidth{0.0pc}
\def\objectstyle{\sssize}
\def\labelstyle{\sssize}
\grow{ \xymatrix@!0{
\\
\save\go+<0pt,2pt>\Drop{H}\restore \ar@{-}`d/4pt [1,2][1,2]&&
   \save\go+<0pt,2pt>\Drop{M} \restore \ar@{-}[2,0]&&
   \save\go+<0pt,2pt>\Drop{A}\restore \ar@{-}[2,0]\\
&&&&\\
&&\ar@{-}[1,1]+<-0.125pc,0.0pc> \ar@{-}[1,1]+<0.0pc,0.125pc>&&\ar@{-}[2,-2]\\
&&&&\\
&&&&\ar@{-}[-1,-1]+<0.125pc,0.0pc>\ar@{-}[-1,-1]+<0.0pc,-0.125pc>
}}
\grow{ \xymatrix@!0{
\\\\\\
\txt{=}
}}
\grow{ \xymatrix@!0{ \save\go+<0pt,2pt>\Drop{H}\restore \ar@{-}[2,0]&&
\save\go+<0pt,2pt>\Drop{M} \restore
   \ar@{-}[1,1]+<-0.125pc,0.0pc> \ar@{-}[1,1]+<0.0pc,0.125pc>&&
   \save\go+<0pt,2pt>\Drop{A}\restore \ar@{-}[2,-2]\\
&&&&\\
\ar@{-}[1,1]+<-0.125pc,0.0pc> \ar@{-}[1,1]+<0.0pc,0.125pc>&&\ar@{-}[2,-2]
      &&\ar@{-}[-1,-1]+<0.125pc,0.0pc>\ar@{-}[-1,-1]+<0.0pc,-0.125pc>\ar@{-}[4,0]\\
&&&&\\
\ar@{-}[2,0]&&\ar@{-}[-1,-1]+<0.125pc,0.0pc>\ar@{-}[-1,-1]+<0.0pc,-0.125pc>\ar@{-}`d/4pt [1,2][1,2]&&\\
&&&&\\
&&&&
}}
\grow{ \xymatrix@!0{
\\\\\\
\txt{.}
}}
$$
\end{description}

\end{definition}

The pairs $(M,\phi)$, consisting of an $H$-bimodule $M$ and a transposition $\phi$ of $M$ on
$(A,s)$ are the objects of a category $\mathfrak{T}^s_A({}_H\mathcal{M}_H)$, called the category
of {\em transpositions {\em of} $H$-bimodules} on $(A,s)$. A {\em morphism {\em of}
transpositions} $f\colon (M,\phi)\to (N,\varphi)$ is a left and right $H$-linear map $f\colon M\to
N$, such that $\varphi \xcirc (f\ot A)= (A\ot f)\xcirc \phi$. In a similar way we define the
category $\mathfrak{T}^s_A(\mathcal{V}ect)$ of {\em transpositions {\em of} $k$-vector spaces} on
$(A,s)$. It is easy to check that both categories are abelian. Moreover, both are tensor
categories:

\begin{itemize}

\smallskip

\item The unit of $\mathfrak{T}^s_A(\mathcal{V}ect)$ and $\mathfrak{T}^s_A({}_H\mathcal{M}_H)$
is $(k,\tau)$, where $\tau\colon k\ot A \to A\ot k$ is the flip and $k$ is endowed with the
trivial module structure.

\smallskip

\item Given $(M,\phi)$ and $(N,\varphi)$ in $\mathfrak{T}^s_A(\mathcal{V}ect)$, the
tensor product $(M,\phi)\ot (N,\varphi)$ is the pair $(M\ot N, \phi\ot \varphi)$, where $\phi\ot
\varphi:= (\phi \ot N)\xcirc(M\ot \varphi)$. If $(M,\phi)$ and $(N,\varphi)$ belongs to
$\mathfrak{T}^s_A({}_H\mathcal{M}_H)$, then $M\ot N$ is also endowed with the left and right
actions $h\cdot (m\ot n):=h\cdot m\ot n$ and $(m\ot n)\cdot h:= m\ot n\cdot h$.

\smallskip

\end{itemize}

Let $(M,\phi)$ be an object in $\mathfrak{T}^s_A({}_H \mathcal{M}_H)$. An $H$ bimodule map
$f\colon M\to \Z({}^s\! A)^+$ is said to be $\phi$-central if $\mu_A \xcirc (f\ot A) = \mu_A\xcirc
(A\ot f)\xcirc \phi$. Let $\Xi\colon \mathfrak{T}^s_A({}_H\mathcal{M}_H) \rightarrow \Vect$ be the
contravariant additive functor defined on objects and arrows by

\begin{itemize}

\smallskip

\item $\Xi(M,\phi)$ is the $k$-vector space, consisting of the $H$-bimodule maps $f$ from $M$
to $\Z({}^s\! A)^+$ which are $\phi$-central,

\smallskip

\item $\Xi(\alpha)(f):= f\xcirc \alpha$,

\smallskip

\end{itemize}
respectively.

\begin{theorem}\label{th5.3} Let $R^n \Xi$ be the $n$-th right derived functor of $\Xi$,
relative to the class of the epimorphisms in $\mathfrak{T}^s_A({}_H\mathcal{M}_H)$ that split in
$\mathfrak{T}^s_A(\mathcal{V}ect)$. The $n$-th cohomology group $\Ho^n_+(H,A,s)$  of
$(C^*_s,\de^*)$ is canonically isomorphic to $R^n \Xi (H,s)$, for all $n\ge 0$.
\end{theorem}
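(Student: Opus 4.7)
The strategy is to produce an explicit relative projective resolution of $(H,s)$ in $\mathfrak{T}^s_A({}_H\mathcal{M}_H)$ and then verify that applying $\Xi$ to it yields precisely the cochain complex $(C^*_s,\delta^*)$. Since $\Xi$ is contravariant and left exact with respect to the prescribed class of epimorphisms, its $n$-th right derived functor at $(H,s)$ will then coincide with $\Ho^n_+(H,A,s)$. The resolution we take is the normalized bar resolution: set $\bar H := H/k$ and $B_n := H\ot \bar H^{\ot n}\ot H$ for $n\ge 0$, equipped with the outer $H$-bimodule structure and with the transposition $\phi_n\colon B_n\ot A\to A\ot B_n$ obtained by iterating $s$ to carry the $A$-factor leftward across each $H$-tensorand. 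Compatibility of $s$ with $\eta_H$ ensures that $\phi_n$ descends to the quotient by $k$, while the compatibilities of $s$ with $\mu_H$, $\De_H$ and $\ep_H$, together with the fact that the braid of $H$ is the flip, guarantee that $(B_n,\phi_n)\in \mathfrak{T}^s_A({}_H\mathcal{M}_H)$ and that the standard bar differentials $b\colon B_n\to B_{n-1}$ and the augmentation $\mu_H\colon B_0\to H$ are morphisms in that category.

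To see that this is a relative projective resolution we verify the two standard requirements. First, the classical contracting homotopy of the bar resolution is the insertion of $\eta_H$ on the leftmost factor; since $s\xcirc (\eta_H\ot A) = A\ot \eta_H$, this homotopy lies in $\mathfrak{T}^s_A(\mathcal{V}ect)$, so the augmented complex $B_\bullet\to (H,s)\to 0$ is split exact in $\mathfrak{T}^s_A(\mathcal{V}ect)$. Second, objects of the form $(H\ot V\ot H,\phi_V)$ with $\phi_V$ built from $s$ by iteration are projective relative to $\mathfrak{T}^s_A(\mathcal{V}ect)$-split epimorphisms, via an adjunction identifying $H$-bimodule morphisms out of $H\ot V\ot H$ compatible with $\phi_V$ with $\mathfrak{T}^s_A(\mathcal{V}ect)$-morphisms out of $V$. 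In particular each $B_n$ is relatively projective, and $(B_\bullet,b)\to (H,s)$ computes $R^*\Xi(H,s)$.

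It remains to identify $\Xi(B_\bullet)$ with $(C^*_s,\delta^*)$. An $H$-bimodule map $F\colon B_n\to \Z({}^s\! A)^+$ is completely determined, because the target has right $H$-action through $\ep$, by its value $f := F(1\ot -\ot 1)\colon \bar H^{\ot n}\to \Z({}^s\! A)$, via the formula $F(h\ot \bar x\ot l) = \ep(l)\, h\cdot f(\bar x)$; this yields a normalized $k$-linear map $f\colon H^{\ot n}\to A$ whose image lies in $\Z({}^s\!A)\sub {}^s\! A$. Since the braid of $H$ is the flip, this image condition is exactly the compatibility of $f$ with $s$. Unpacking the $\phi_n$-central condition on $F$ using the iterated definition of $\phi_n$ then translates precisely into the $s^n$-centrality of $f$ in the sense of Definition~\ref{de1.16 psi-central}; hence $\Xi(B_n) = C^n_s$. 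The induced map $\Xi(b)$ is the restriction of the normalized Hochschild coboundary and matches $\delta^{n+1}$: the extremal summands come from the $\eta_H$-insertions in $b$, which under the bimodule structure $h\cdot a\cdot l = h\ep(l)\cdot a$ produce the terms $x_1\cdot f(x_2\ot\cdots)$ and $(-1)^{n+1}f(x_1\ot\cdots\ot x_n)\ep(x_{n+1})$, while the interior summands come from the $\mu_H$-contractions. So $(\Xi(B_\bullet),\Xi(b)) = (C^*_s,\delta^*)$ and the theorem follows.

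The principal obstacle is the diagrammatic verification that the iterated transpositions $\phi_n$ satisfy all five compatibility axioms of Definition~\ref{de5.2}, and that the bar differentials and contracting homotopy are morphisms in the relevant categories; this is routine but lengthy, using the cocommutativity of $H$ together with the compatibilities of $s$ collected in Section~1. The second delicate point is the translation of $\phi_n$-centrality into $s^n$-centrality via the tensor-hom adjunction, which requires careful bookkeeping in order to match, one by one, the various $s$-insertions appearing in $\phi_n$ with those appearing in Definition~\ref{de1.16 psi-central}.
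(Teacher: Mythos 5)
Your proposal is correct and follows essentially the same route as the paper: the normalized bar resolution $H\ot\ov{H}^{\ot *}\ot H$ equipped with the transpositions induced by iterating $s$, checked to be a complex in $\mathfrak{T}^s_A({}_H\mathcal{M}_H)$ that is contractible in $\mathfrak{T}^s_A(\mathcal{V}ect)$ with relatively projective terms, followed by the identification of $\Xi$ applied to this resolution with $(C^*_s,\de^*)$. Your write-up merely supplies more detail (the $\eta_H$-insertion homotopy, the adjunction argument for relative projectivity, and the translation of $\phi_n$-centrality into $s$-centrality) than the paper's terse verification.
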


\begin{proof} For each $n\ge 0$, let $\ov{s}_n\colon H\ot \ov{H}^{\ot n}\ot H \ot A \to
A\ot H\ot \ov{H}^{\ot n}\ot H$ be the transposition of $H\ot \ov{H}^{\ot n}\ot H$ on $(A,s)$,
induced by $s^{n+2}$. Let $(H\ot \ov{H}^{\ot *}\ot H,b')$ be the canonical normalized resolution
of $H$ as an $H$-bimodule. It is easy to check that $((H\ot \ov{H}^{\ot *}\ot H,\ov{s}_*),b')$ is
a complex in $\mathfrak{T}^s_A({}_H\mathcal{M}_H)$ and that
$$
\xymatrix{H &{H\ot H} \lto_-{\mu} &{H\ot \ov{H}\ot H} \lto_-{b'} & {H\ot \ov{H}^{\ot 2}\ot
H}\lto_-{b'} & \lto_-{b'} \dots,}
$$
where $\mu$ is defined by $\mu(h\ot l) = hl$, is contractible as a complex in $\mathfrak{T}^s_A(
\mathcal{V}ect)$. Moreover, it is immediate that $(H\ot \ov{H}^{\ot n}\ot H,\ov{s}_n)$ is relative
projective for all $n$. To finish the proof it suffices to note that applying the functor $\Xi$ to
the resolution $((H\ot \ov{H}^{\ot *}\ot H,\ov{s}_*),b')$ one obtain the cochain complex
$(C^*_s,\de^*)$.
\end{proof}

\begin{corollary}\label{co5.4} $\Ho^n(H,A,s)$ is isomorphic to $R^n \Xi(H,s)$, for all
$n\ge 2$.
\end{corollary}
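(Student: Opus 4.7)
The plan is simply to chain the two preceding results. Theorem~\ref{th5.1} identifies $\Ho^n(H,A,s)$ with $\Ho^n_+(H,A,s)$ in the range $n\ge 2$ (the range where the exponential and logarithm series in the Sweedler-type argument land inside the normalized subcomplex and remain convolution-invertible), while Theorem~\ref{th5.3} identifies $\Ho^n_+(H,A,s)$ with $R^n\Xi(H,s)$ for every $n\ge 0$. Composing these two isomorphisms yields the claim.

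More concretely, I would write: for $n\ge 2$, apply Theorem~\ref{th5.1} to obtain a canonical isomorphism $\Ho^n(H,A,s)\cong \Ho^n_+(H,A,s)$, and then apply Theorem~\ref{th5.3} to obtain a canonical isomorphism $\Ho^n_+(H,A,s)\cong R^n\Xi(H,s)$. Both isomorphisms are natural, so the composition furnishes the desired canonical isomorphism $\Ho^n(H,A,s)\cong R^n\Xi(H,s)$.

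There is essentially no obstacle to overcome at this stage; the only point worth noting in the write-up is the mismatch of ranges. Theorem~\ref{th5.1} fails in general for $n=0,1$ because the $\exp$--$\log$ correspondence between Hochschild cochains and the $\Reg^s$-groups is only bijective in positive degrees where convergence and convolution-invertibility are automatic, which is exactly why the corollary is stated for $n\ge 2$ even though Theorem~\ref{th5.3} holds for all $n\ge 0$. The proof therefore reduces to a single line invoking both theorems in succession.
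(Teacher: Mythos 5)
Your proposal is correct and is exactly the argument the paper intends: the corollary is stated as an immediate consequence of Theorem~\ref{th5.1} (valid for $n\ge 2$) composed with Theorem~\ref{th5.3} (valid for all $n\ge 0$), with no further content needed. Your remark about the range mismatch explains precisely why the statement is restricted to $n\ge 2$.
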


The following results will be useful to perform explicit computations.

\begin{proposition}\label{pr5.5} Let $(M,\phi)$ be an object in $\mathfrak{T}^s_A({}_H
\mathcal{M}_H)$ and let $f\colon M\to \Z({}^s\! A)^+$ an $H$-bimodule map. Assume that $M$ is
generated as an $H$-bimodule by  $(m_i)_{i\in I}$. If $f(m_i)a = \mu_A\xcirc (A\ot f)\xcirc
\phi(m_i\ot a)$ for all $i\in I$ and $a\in A$, then $f$ is $\phi$-central.
\end{proposition}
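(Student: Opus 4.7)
Let me set $N:=\{m\in M : f(m)a = \mu_A\xcirc(A\ot f)\xcirc \phi(m\ot a)\text{ for every }a\in A\}$. By hypothesis $m_i\in N$ for each $i\in I$, so since the $(m_i)_{i\in I}$ generate $M$ as an $H$-bimodule it suffices to show that $N$ is stable under both the left and the right $H$-actions on $M$.

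Closure under the right action is immediate. Writing $\phi(m\ot a) = \sum a^{(0)}\ot m^{(0)}$, condition~(4) of Definition~\ref{de5.2} gives $\phi(m\cdot h\ot a) = \sum a^{(0)}\ot m^{(0)}\cdot h$, and because $f$ is an $H$-bimodule map into $\Z({}^s\!A)^+$ (whose right $H$-action is through the counit) we have $f(m^{(0)}\cdot h)=\ep(h)f(m^{(0)})$ and $f(m\cdot h)=\ep(h)f(m)$. Both sides of the centrality equation at $m\cdot h$ therefore pick up the common factor $\ep(h)$, and $m\in N$ implies $m\cdot h\in N$.

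Closure under the left action is the heart of the argument. Since $U(L)$ is generated as an algebra by $L$ together with $1$ and $N$ is automatically stable under iterated left actions, it is enough to prove that $x\cdot m\in N$ whenever $m\in N$ and $x\in L$. Fix such $m$ and $x$, and write $s(x\ot a^{(0)})=\sum (a^{(0)})^{s_x}\ot x^s$. Apply the operator $x\cdot(-)$ to the identity $f(m)a=\sum a^{(0)}f(m^{(0)})$. Item~(4) of Remark~\ref{re1.6 car de H-braid mod alg}, combined with $\De(x)=x\ot 1+1\ot x$ and the relation $s(x\ot f(m))=f(m)\ot x$ (a consequence of $f(m)\in {}^s\!A$), expands the left-hand side as $(x\cdot f(m))a + f(m)(x\cdot a)$ and the right-hand side as $\sum(x\cdot a^{(0)})f(m^{(0)}) + \sum (a^{(0)})^{s_x}(x^s\cdot f(m^{(0)}))$. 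Next, condition~(2) of Definition~\ref{de5.2} gives $\phi(m\ot x\cdot a)=\sum (x\cdot a^{(0)})\ot m^{(0)}$, so the centrality hypothesis at $m$ applied with $x\cdot a$ in place of $a$ reads $f(m)(x\cdot a)=\sum (x\cdot a^{(0)})f(m^{(0)})$. Cancelling this common term yields $(x\cdot f(m))a = \sum (a^{(0)})^{s_x}(x^s\cdot f(m^{(0)}))$, whose right-hand side is exactly $\mu_A\xcirc(A\ot f)\xcirc \phi(x\cdot m\ot a)$ by condition~(5) of Definition~\ref{de5.2}; since $f(x\cdot m)=x\cdot f(m)$ by $H$-linearity, this is the centrality equation at $x\cdot m$.

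The main obstacle is the coordination in that last paragraph: four distinct axioms (item~(4) of Remark~\ref{re1.6 car de H-braid mod alg}, the defining relation of ${}^s\!A$, and conditions~(2) and~(5) of Definition~\ref{de5.2}) have to be combined in just the right order so that the two new terms produced by $\De(x)=x\ot 1+1\ot x$ on each side of the identity cancel in pairs.
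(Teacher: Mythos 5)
Your proof is correct, and its overall skeleton agrees with the paper's: both reduce the statement to showing that the set $N$ of elements at which the centrality identity holds (for every $a\in A$) is stable under the right and left $H$-actions, with right stability being essentially trivial because the right action on $\Z({}^s\! A)^+$ factors through $\ep$. The difference lies in the left-action step. The paper proves $h\cdot x\in N$ for an \emph{arbitrary} $h\in H$ in one diagrammatic computation: it writes $f(h\cdot x)a=(h\cdot f(x))a$, unfolds this with the module-algebra axiom and the antipode as $h_{(1)}\cdot\bigl(f(x)\,(S(h_{(2)})\cdot a)\bigr)$ (the transposition disappearing because $\ima(f)\sub {}^s\! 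A$), applies the hypothesis and condition~(2) of Definition~\ref{de5.2}, and then uses cocommutativity, left $H$-linearity of $f$ and condition~(5) to reassemble $\mu_A\xcirc(A\ot f)\xcirc\phi(h\cdot x\ot a)$. You instead exploit that $H=U(L)$ is generated as an algebra by $L$, prove left stability only for primitive $x$, and replace the antipode trick by applying $x\cdot(-)$ to the hypothesis and cancelling the common term $f(m)(x\cdot a)=\sum(x\cdot a^{(0)})f(m^{(0)})$, the surviving term being identified via condition~(5). Your route is more elementary (no antipode, cocommutativity enters only through primitivity of $x$), but it is tied to enveloping algebras, whereas the paper's computation applies verbatim to any element of a cocommutative $H$ in this setting. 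Two minor remarks: your reduction tacitly uses that $N$ is a linear subspace (it is, so nothing is missing), and you quote condition~(4) with the flip where the paper's diagram carries the transposition $s$; this does not affect your right-stability argument, since $f$ converts the right action into multiplication by $\ep(h)$ and $(A\ot\ep_H)\xcirc s=\ep_H\ot A$.
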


\begin{proof} It suffices to check that if $f(x)a = \mu_A\xcirc (A\ot f)\xcirc
\phi(x\ot a)$, then
$$
f(x\cdot h)a = \mu_A\xcirc (A\ot f)\xcirc \phi(x\cdot h\ot a)\quad \text{and}\quad f(h\cdot x)a =
\mu_A\xcirc (A\ot f)\xcirc \phi(h \cdot x\ot a),
$$
for all $h\in H$. The first equation is easy, since the right action of $H$ on $\Z({}^s\! A)^+$ is
the trivial one. Let us prove the second one. We have:
\spreaddiagramcolumns{-1.6pc} \spreaddiagramrows{-1.6pc}
\objectmargin{0.0pc}\objectwidth{0.0pc}
\def\objectstyle{\sssize}
\def\labelstyle{\sssize}
$$
\grow{ \xymatrix@!0{
\\\\\\\\\\\\
\save\go+<0pt,2pt>\Drop{H}\restore \ar@{-}`d/4pt [1,2][1,2]&&\save\go+<0pt,2pt>
    \Drop{kx}\restore \ar@{-}[2,0]&&\save\go+<0pt,2pt>\Drop{A}\restore \ar@{-}[5,0]\\
&&&&\\
&&\ar@{-}[1,0]&&\\
&&*+[o]+<0.40pc>[F]{f}\ar@{-}[2,0]&&\\
&&&&\\
&&\ar@{-}`d/4pt [1,1] `[0,2] [0,2] &&\\
&&&\ar@{-}[1,0]&\\
&&&&
}}
\grow{ \xymatrix@!0{
\\\\\\\\\\\\\\\\\\
\txt{=}
}}
\grow{ \xymatrix@!0{
\\\\\\\\\\\\
\save\go+<0pt,2pt>\Drop{H}\restore \ar@{-}[3,0]&&\save\go+<0pt,2pt>
    \Drop{kx}\restore \ar@{-}[2,0]&&\save\go+<0pt,2pt>\Drop{A}\restore \ar@{-}[5,0]\\
&&&&\\
&&*+[o]+<0.40pc>[F]{f}\ar@{-}[3,0]&&\\
\ar@{-}`d/4pt [1,2][1,2]&&&&\\
&&&&\\
&&\ar@{-}`d/4pt [1,1] `[0,2] [0,2]&&\\
&&&\ar@{-}[1,0]&\\
&&&&
}}
\grow{ \xymatrix@!0{
\\\\\\\\\\\\\\\\\\
\txt{=}
}}
\grow{ \xymatrix@!0{
\\\\
&\save\go+<0pt,2pt>\Drop{H}\restore \ar@{-}[3,0]&&&\save\go+<0pt,2pt>
    \Drop{kx}\restore \ar@{-}[2,0]&&\save\go+<0pt,2pt>\Drop{A}\restore \ar@{-}[12,0]\\
&&&&&&\\
&&&&*+[o]+<0.40pc>[F]{f}\ar@{-}[2,0]&&\\
&\ar@{-}`l/4pt [1,-1] [1,-1] \ar@{-}`r [1,1] [1,1]&&&&&\\
\ar@{-}[9,0]&&\ar@{-}[1,1]+<-0.125pc,0.0pc> \ar@{-}[1,1]+<0.0pc,0.125pc>&&\ar@{-}[2,-2]&&\\
&&&&&&\\
&&\ar@{-}[6,0]&&\ar@{-}[-1,-1]+<0.125pc,0.0pc>\ar@{-}[-1,-1]+<0.0pc,-0.125pc>\ar@{-}[2,0]&&\\
&&&&&&\\
&&&&*+[o]+<0.40pc>[F]{S}\ar@{-}[2,0]&&\\
&&&&&&\\
&&&&\ar@{-}`d/4pt [1,2][1,2] &&\\
&&&&&&\\
&&\ar@{-}`d/4pt [1,2] `[0,4] [0,4]&&&&\\
\ar@{-}`d/4pt [1,4][1,4]&&&&\ar@{-}[2,0]&&\\
&&&&&&\\
&&&&&&
}}
\grow{ \xymatrix@!0{
\\\\\\\\\\\\\\\\\\
\txt{=}
}}
\grow{ \xymatrix@!0{
\\\\\\
&\save\go+<0pt,2pt>\Drop{H}\restore \ar@{-}[1,0]&&&\save\go+<0pt,2pt>
    \Drop{kx}\restore \ar@{-}[2,0]&&\save\go+<0pt,2pt>\Drop{A}\restore \ar@{-}[10,0]\\
&\ar@{-}`l/4pt [1,-1] [1,-1] \ar@{-}`r [1,1] [1,1]&&&&&\\
\ar@{-}[9,0]&&\ar@{-}[2,2] &&\ar@{-}[2,-2]&&\\
&&&&&&\\
&&\ar@{-}[2,0]&&\ar@{-}[2,0]&&\\
&&&&&&\\
&&*+[o]+<0.40pc>[F]{f}\ar@{-}[4,0]&&*+[o]+<0.40pc>[F]{S}\ar@{-}[2,0]&&\\
&&&&&&\\
&&&&\ar@{-}`d/4pt [1,2][1,2]&&\\
&&&&&&\\
&&\ar@{-}`d/4pt [1,2] `[0,4] [0,4] &&&&\\
\ar@{-}`d/4pt [1,4][1,4]&&&&\ar@{-}[2,0]&&\\
&&&&&&\\
&&&&&&
}}
\grow{ \xymatrix@!0{
\\\\\\\\\\\\\\\\\\
\txt{=}
}}
\grow{ \xymatrix@!0{
&\save\go+<0pt,2pt>\Drop{H}\restore \ar@{-}[1,0]&&&\save\go+<0pt,2pt>
    \Drop{kx}\restore \ar@{-}[2,0]&&\save\go+<0pt,2pt>\Drop{A}\restore \ar@{-}[10,0]\\
&\ar@{-}`l/4pt [1,-1] [1,-1] \ar@{-}`r [1,1] [1,1]&&&&&\\
\ar@{-}[15,0]&&\ar@{-}[2,2] &&\ar@{-}[2,-2]&&\\
&&&&&&\\
&&\ar@{-}[4,0]&&\ar@{-}[2,0]&&\\
&&&&&&\\
&&&&*+[o]+<0.40pc>[F]{S}\ar@{-}[2,0]&&\\
&&&&&&\\
&&\ar@{-}[2,2]&&\ar@{-}`d/4pt [1,2][1,2]&&\\
&&&&&&\\
&&&&\ar@{-}[1,1]+<-0.125pc,0.0pc> \ar@{-}[1,1]+<0.0pc,0.125pc>&&\ar@{-}[2,-2]\\
&&&&&&\\
&&&&\ar@{-}[4,0]&&\ar@{-}[-1,-1]+<0.125pc,0.0pc>\ar@{-}[-1,-1]+<0.0pc,-0.125pc>\ar@{-}[2,0]\\
&&&&&&\\
&&&&&&*+[o]+<0.40pc>[F]{f}\ar@{-}[2,0]\\
&&&&&&\\
&&&&\ar@{-}`d/4pt [1,1] `[0,2] [0,2]&&\\
\ar@{-}`d/4pt [1,5][1,5]&&&&&\ar@{-}[2,0]&\\
&&&&&&\\
&&&&&&
}}
\grow{ \xymatrix@!0{
\\\\\\\\\\\\\\\\\\
\txt{=}
}}
\grow{ \xymatrix@!0{
\\\\\\\\
&\save\go+<0pt,2pt>\Drop{H}\restore \ar@{-}[1,0]&&&\save\go+<0pt,2pt>
    \Drop{kx}\restore \ar@{-}[1,1]+<-0.125pc,0.0pc> \ar@{-}[1,1]+<0.0pc,0.125pc>
      &&\save\go+<0pt,2pt>\Drop{A}\restore \ar@{-}[2,-2]\\
&\ar@{-}`l/4pt [1,-1] [1,-1] \ar@{-}`r [1,1] [1,1]&&&&&\\
\ar@{-}[7,0]&&\ar@{-}[2,0]&&\ar@{-}[6,0]&&
   \ar@{-}[-1,-1]+<0.125pc,0.0pc>\ar@{-}[-1,-1]+<0.0pc,-0.125pc>\ar@{-}[2,0]\\
&&&&&&\\
&&*+[o]+<0.40pc>[F]{S}\ar@{-}[2,0]&&&&*+[o]+<0.40pc>[F]{f}\ar@{-}[4,0]\\
&&&&&&\\
&&\ar@{-}`d/4pt [1,2][1,2]&&&&\\
&&&&&&\\
&&&&\ar@{-}`d/4pt [1,1] `[0,2] [0,2]&&\\
\ar@{-}`d/4pt [1,5][1,5]&&&&&\ar@{-}[2,0]&\\
&&&&&&\\
&&&&&&
}}
\grow{ \xymatrix@!0{
\\\\\\\\\\\\\\\\\\
\txt{=}
}}
\grow{ \xymatrix@!0{
\\\\\\\\\\
\save\go+<0pt,2pt>\Drop{H}\restore \ar@{-}[2,0]&&\save\go+<0pt,2pt>
    \Drop{kx}\restore \ar@{-}[1,1]+<-0.125pc,0.0pc> \ar@{-}[1,1]+<0.0pc,0.125pc>
      &&\save\go+<0pt,2pt>\Drop{A}\restore \ar@{-}[2,-2]\\
&&&&\\
\ar@{-}[1,1]+<-0.125pc,0.0pc> \ar@{-}[1,1]+<0.0pc,0.125pc>&&\ar@{-}[2,-2]&&
   \ar@{-}[-1,-1]+<0.125pc,0.0pc>\ar@{-}[-1,-1]+<0.0pc,-0.125pc>\ar@{-}[2,0]\\
&&&&\\
\ar@{-}[3,0]&&\ar@{-}[1,0]\ar@{-}[-1,-1]+<0.125pc,0.0pc>\ar@{-}[-1,-1]+<0.0pc,-0.125pc>&&
   *+[o]+<0.40pc>[F]{f}\ar@{-}[3,0]\\
&&\ar@{-}`d/4pt [1,2][1,2]&&\\
&&&&\\
\ar@{-}`d/4pt [1,2] `[0,4] [0,4] &&&&\\
&&\ar@{-}[1,0]&&\\
&&&&
}}
\grow{ \xymatrix@!0{
\\\\\\\\\\\\\\\\\\
\txt{=}
}}
\grow{ \xymatrix@!0{
\\\\\\\\
\save\go+<0pt,2pt>\Drop{H}\restore \ar@{-}`d/4pt [1,2][1,2]&&\save\go+<0pt,2pt>
    \Drop{kx}\restore \ar@{-}[2,0]
      &&\save\go+<0pt,2pt>\Drop{A}\restore \ar@{-}[2,0]\\
&&&&\\
&&\ar@{-}[1,1]+<-0.125pc,0.0pc> \ar@{-}[1,1]+<0.0pc,0.125pc>&&\ar@{-}[2,-2]\\
&&&&\\
&&\ar@{-}[4,0]&&\ar@{-}[-1,-1]+<0.125pc,0.0pc>\ar@{-}[-1,-1]+<0.0pc,-0.125pc>\ar@{-}[2,0]\\
&&&&\\
&&&&*+[o]+<0.40pc>[F]{f}\ar@{-}[2,0]\\
&&&&\\
&&\ar@{-}`d/4pt [1,1] `[0,2] [0,2]&&\\
&&&\ar@{-}[1,0]&\\
&&&&
}}
\grow{ \xymatrix@!0{
\\\\\\\\\\\\\\\\\\
\txt{,}
}}
$$
where the first equality follows from the fact that $f$ is left $H$-linear, the second one from the fact that
$(A,s)$ is and $H$-module algebra, the third one from the fact that $\ima(f)\subseteq {}^s\! A$, the fourth one
from the hypothesis, the fifth one from item~(2) of Definition~\ref{de5.2}, the sixth one from the fact $H$ is
cocommutative and $(A,s)$ is a left $H$-module algebra and the seventh one from the fact that $f$ is left
$H$-linear and from item~(5) of Definition~\ref{de5.2}.
\end{proof}


\begin{proposition}\label{pr5.6} Let $(M,\phi)$ be an object in $\mathfrak{T}^s_A({}_H \mathcal{M}_H)$, let
$f\colon M\to \Z({}^s\! A)^+$ be an $H$-bimodule map and let $V$ be a vector subspace of $M$ such
that $\phi(V\ot A)\subseteq A\ot V$. The set $A_{f,V}$, of all $a\in A$ satisfying $f(v)a =
\mu_A\xcirc (A\ot f)\xcirc \phi(v\ot a)$ for all $v\in V$, is a subalgebra of $A$.
\end{proposition}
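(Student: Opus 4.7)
The plan is to verify the two defining conditions of a subalgebra: that $1\in A_{f,V}$ and that $A_{f,V}$ is closed under multiplication. For the unit, the second diagram of item~(1) of Definition~\ref{de5.2} gives $\phi(v\ot 1)=1\ot v$ for every $v\in V$, so $\mu_A\xcirc(A\ot f)\xcirc\phi(v\ot 1)=f(v)=f(v)\cdot 1$, which shows $1\in A_{f,V}$.

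For closure under multiplication, let $a,b\in A_{f,V}$ and $v\in V$. The plan is to expand $\phi$ on the product $ab$ by means of the first diagram of item~(1) of Definition~\ref{de5.2}, namely
\[
\phi\xcirc(M\ot\mu_A)=(\mu_A\ot M)\xcirc(A\ot\phi)\xcirc(\phi\ot A),
\]
and then apply the defining property of $A_{f,V}$ twice, first to $b$ and then to $a$. Concretely, write $\phi(v\ot a)=\sum_i a_i\ot v_i$; the hypothesis $\phi(V\ot A)\sub A\ot V$ guarantees that each $v_i\in V$. Then the multiplicativity identity for $\phi$ yields
\[
\mu_A\xcirc(A\ot f)\xcirc\phi(v\ot ab)=\sum_i a_i\bigl[\mu_A\xcirc(A\ot f)\xcirc\phi(v_i\ot b)\bigr].
\]
Since $b\in A_{f,V}$ and each $v_i\in V$, the bracketed expression equals $f(v_i)b$, so the right-hand side becomes $\bigl(\sum_i a_if(v_i)\bigr)b=\bigl[\mu_A\xcirc(A\ot f)\xcirc\phi(v\ot a)\bigr]b$. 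Finally, since $a\in A_{f,V}$ and $v\in V$, this equals $f(v)\,ab$, proving $ab\in A_{f,V}$.

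The only genuinely nontrivial ingredient is the hypothesis $\phi(V\ot A)\sub A\ot V$, which is precisely what allows one to invoke the centrality of $b$ on the intermediate elements $v_i$; without it, the second step of the computation would leave $V$ and break the argument. The rest of the proof is a direct two-step application of the multiplicativity of $\phi$, and I do not expect any further obstacle. If a purely diagrammatic rendering is preferred, the same argument can be drawn as a brief chain of equalities in which the box labelled $a$ is unpacked first via compatibility of $\phi$ with $\mu_A$, the centrality of $b$ is applied to the upper piece, and the centrality of $a$ is applied to the lower piece.
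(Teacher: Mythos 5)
Your proof is correct and is essentially the paper's argument: the paper's diagrammatic chain starts from $f(v)(ab)$, applies the centrality of $a$, then the centrality of $b$ on the intermediate elements of $V$ (which is exactly where the hypothesis $\phi(V\ot A)\subseteq A\ot V$ enters), and finally the compatibility of $\phi$ with $\mu_A$ to arrive at $\mu_A\xcirc(A\ot f)\xcirc\phi(v\ot ab)$. You run the same three steps in the reverse order and element-wise rather than graphically, so there is no substantive difference.
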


\begin{proof} It is immediate that $1\in A_{f,V}$ and that $A_{f,V}$ is closed under addition.
Let us check it is also closed under multiplication. Let $a,b\in A_{f,V}$. We have:
\spreaddiagramcolumns{-1.6pc} \spreaddiagramrows{-1.6pc}
\objectmargin{0.0pc}\objectwidth{0.0pc}
\def\objectstyle{\sssize}
\def\labelstyle{\sssize}
$$
\grow{ \xymatrix@!0{
\\\\\\
\save\go+<0pt,2pt> \Drop{V}\restore \ar@{-}[2,0]&&\save\go+<0pt,2pt>\Drop{ka}\restore
\ar@{-}`d/4pt [1,1] `[0,2] [0,2]&&\save\go+<0pt,2pt>\Drop{kb}\restore\\
&&&\ar@{-}[4,0]&\\
&&&&\\
*+[o]+<0.40pc>[F]{f}\ar@{-}[2,0]&&&&\\
&&&&\\
\ar@{-}`d/4pt [1,1]+<0.2pc,0pc> `[0,3] [0,3]&&&&\\
&\save[]+<0.2pc,0pc> \Drop{}\ar@{-}[1,0]+<0.2pc,0pc>\restore &&&\\
&&&&
}}
\grow{ \xymatrix@!0{
\\\\\\\\\\\\
\txt{=}
}}
\grow{ \xymatrix@!0{
\\\\
\save\go+<0pt,2pt> \Drop{V}\restore \ar@{-}[2,0]&&\save\go+<0pt,2pt>\Drop{ka}\restore
\ar@{-}[5,0]&&\save\go+<0pt,2pt>\Drop{kb}\restore\ar@{-}[7,0]\\
&&&&\\
&&&&\\
*+[o]+<0.40pc>[F]{f}\ar@{-}[2,0]&&&&\\
&&&&\\
\ar@{-}`d/4pt [1,1] `[0,2] [0,2]&&&&\\
&\ar@{-}[1,0]&&&\\
&\ar@{-}`d/4pt [1,1]+<0.2pc,0pc> `[0,3] [0,3]&&&\\
&&\save[]+<0.2pc,0pc> \Drop{}\ar@{-}[1,0]+<0.2pc,0pc>\restore&&\\
&&&&
}}
\grow{ \xymatrix@!0{
\\\\\\\\\\\\
\txt{=}
}}
\grow{ \xymatrix@!0{
\\
\save\go+<0pt,2pt> \Drop{V}\restore \ar@{-}[1,1]+<-0.125pc,0.0pc> \ar@{-}[1,1]+<0.0pc,0.125pc>
    &&\save\go+<0pt,2pt>\Drop{ka}\restore\ar@{-}[2,-2]
     &&\save\go+<0pt,2pt>\Drop{kb}\restore\ar@{-}[8,0]\\
&&&&\\
\ar@{-}[4,0]&&\ar@{-}[-1,-1]+<0.125pc,0.0pc>\ar@{-}[-1,-1]+<0.0pc,-0.125pc>\ar@{-}[2,0]&&\\
&&&&\\
&&*+[o]+<0.40pc>[F]{f}\ar@{-}[2,0]&&\\
&&&&\\
\ar@{-}`d/4pt [1,1] `[0,2] [0,2]&&&&\\
&\ar@{-}[1,0]&&&\\
&\ar@{-}`d/4pt [1,1]+<0.2pc,0pc> `[0,3] [0,3]&&&\\
&&\save[]+<0.2pc,0pc> \Drop{}\ar@{-}[1,0]+<0.2pc,0pc>\restore&&\\
&&&&
}}
\grow{ \xymatrix@!0{
\\\\\\\\\\\\
\txt{=}
}}
\grow{ \xymatrix@!0{
\\
\save\go+<0pt,2pt> \Drop{V}\restore \ar@{-}[1,1]+<-0.125pc,0.0pc> \ar@{-}[1,1]+<0.0pc,0.125pc>
    &&\save\go+<0pt,2pt>\Drop{ka}\restore\ar@{-}[2,-2]
     &&\save\go+<0pt,2pt>\Drop{kb}\restore\ar@{-}[6,0]\\
&&&&\\
\ar@{-}[6,0]&&\ar@{-}[-1,-1]+<0.125pc,0.0pc>\ar@{-}[-1,-1]+<0.0pc,-0.125pc>\ar@{-}[2,0]&&\\
&&&&\\
&&*+[o]+<0.40pc>[F]{f}\ar@{-}[2,0]&&\\
&&&&\\
&&\ar@{-}`d/4pt [1,1] `[0,2] [0,2]&&\\
&&&\ar@{-}[1,0]&\\
\ar@{-}`d/4pt [1,1]+<0.2pc,0pc> `[0,3] [0,3]&&&&\\
&\save[]+<0.2pc,0pc> \Drop{}\ar@{-}[1,0]+<0.2pc,0pc>\restore&&&\\
&&&&
}}
\grow{ \xymatrix@!0{
\\\\\\\\\\\\
\txt{=}
}}
\grow{ \xymatrix@!0{
\save\go+<0pt,2pt> \Drop{V}\restore \ar@{-}[1,1]+<-0.125pc,0.0pc> \ar@{-}[1,1]+<0.0pc,0.125pc>
    &&\save\go+<0pt,2pt>\Drop{ka}\restore\ar@{-}[2,-2]
     &&\save\go+<0pt,2pt>\Drop{kb}\restore\ar@{-}[2,0]\\
&&&&\\
\ar@{-}[8,0]&&\ar@{-}[-1,-1]+<0.125pc,0.0pc>\ar@{-}[-1,-1]+<0.0pc,-0.125pc>
   \ar@{-}[1,1]+<-0.125pc,0.0pc> \ar@{-}[1,1]+<0.0pc,0.125pc>&&\ar@{-}[2,-2]\\
&&&&\\
&&\ar@{-}[4,0]&&\ar@{-}[-1,-1]+<0.125pc,0.0pc>\ar@{-}[-1,-1]+<0.0pc,-0.125pc>\ar@{-}[2,0]\\
&&&&\\
&&&&*+[o]+<0.40pc>[F]{f}\ar@{-}[2,0]\\
&&&&\\
&&\ar@{-}`d/4pt [1,1] `[0,2] [0,2]&&\\
&&&\ar@{-}[1,0]&\\
\ar@{-}`d/4pt [1,1]+<0.2pc,0pc> `[0,3] [0,3] &&&&\\
&\save[]+<0.2pc,0pc> \Drop{}\ar@{-}[1,0]+<0.2pc,0pc>\restore&&&\\
&&&&
}}
\grow{ \xymatrix@!0{
\\\\\\\\\\\\
\txt{=}
}}
\grow{ \xymatrix@!0{
\\
\save\go+<0pt,2pt> \Drop{V}\restore \ar@{-}[1,1]+<-0.125pc,0.0pc> \ar@{-}[1,1]+<0.0pc,0.125pc>
    &&\save\go+<0pt,2pt>\Drop{ka}\restore\ar@{-}[2,-2]
     &&\save\go+<0pt,2pt>\Drop{kb}\restore\ar@{-}[2,0]\\
&&&&\\
\ar@{-}[2,0]&&\ar@{-}[-1,-1]+<0.125pc,0.0pc>\ar@{-}[-1,-1]+<0.0pc,-0.125pc>
   \ar@{-}[1,1]+<-0.125pc,0.0pc> \ar@{-}[1,1]+<0.0pc,0.125pc>&&\ar@{-}[2,-2]\\
&&&&\\
\ar@{-}`d/4pt [1,1] `[0,2] [0,2]&&&&\ar@{-}[-1,-1]+<0.125pc,0.0pc>\ar@{-}[-1,-1]+<0.0pc,-0.125pc>\ar@{-}[2,0]\\
&\ar@{-}[3,0]&&&\\
&&&&*+[o]+<0.40pc>[F]{f}\ar@{-}[2,0]\\
&&&&\\
&\ar@{-}`d/4pt [1,1]+<0.2pc,0pc> `[0,3] [0,3]&&&\\
&&\save[]+<0.2pc,0pc> \Drop{}\ar@{-}[1,0]+<0.2pc,0pc>\restore &&\\
&&&&
}}
\grow{ \xymatrix@!0{
\\\\\\\\\\\\
\txt{=}
}}
\grow{ \xymatrix@!0{
\\
\save\go+<0pt,2pt> \Drop{V}\restore \ar@{-}[1,0]&&\save\go+<0pt,2pt>\Drop{ka}\restore
\ar@{-}`d/4pt [1,1] `[0,2] [0,2]&&\save\go+<0pt,2pt>\Drop{kb}\restore\\
\ar@{-}[1,1]&&&\ar@{-}[1,0]&\\
&\ar@{-}[1,1]+<-0.125pc,0.0pc> \ar@{-}[1,1]+<0.0pc,0.125pc>&&\ar@{-}[2,-2]&\\
&&&&\\
&\ar@{-}[4,0]&&\ar@{-}[-1,-1]+<0.125pc,0.0pc>\ar@{-}[-1,-1]+<0.0pc,-0.125pc>\ar@{-}[2,0]&\\
&&&&\\
&&&*+[o]+<0.40pc>[F]{f}\ar@{-}[2,0]&\\
&&&&\\
&\ar@{-}`d/4pt [1,1] `[0,2] [0,2] &&&\\
&&\ar@{-}[1,0]&&\\
&&&&
}}
\grow{ \xymatrix@!0{
\\\\\\\\\\\\
\txt{,}
}}
$$
as we want.
\end{proof}

\section{The Chevalley-Eilenberg resolution}
As in Section~5 let $k$ be a characteristic zero field, $H$ the enveloping algebra of a Lie
algebra $L$ and $(A,s)$ a left $H$-module algebra. Our purpose is to show that in order to
compute $\Ho^*(H,A,s)$ it is possible to use a Chevalley-Eilenberg resolution type of $H$ as an
$H$-bimodule. For this we are going to show that this resolution is a complex in
$\mathfrak{T}^s_A({}_H\mathcal{M}_H)$, which is contractible as a complex in $\mathfrak{T}^s_A(
\mathcal{V}ect)$, in a natural way.

\subsection{A simple resolution} Consider three copies $Y_L$, $Z_L$ and $E_L$ of $L$. We will
let $Y_x$, $Z_x$ and $e_x$ ($x\in L$) denote the elements of $Y_L$, $Z_L$ and $E_L$ respectively.
So, the maps $x\mapsto Y_x$, $x\mapsto Z_x$ and $x\mapsto E_x$ will be isomorphisms of vector
spaces. We assign degree $0$ to $Y_x$ and $Z_x$ and degree $1$ to $E_x$. Let $(D_*, d_*)$ be the
differential graded algebra generated by $Y_L\oplus Z_L \oplus E_L$ and the relations

\begin{enumerate}

\smallskip

\item $Y_{x'}Y_x = Y_xY_{x'} + \frac{1}{2} Y_{[x',x]_L}$, for $x',x\in L$,

\smallskip

\item $T_{x'}Y_x = Y_xT_{x'} + \frac{1}{2} T_{[x',x]_L}$, for $x',x\in L$,

\smallskip

\item $T_{x'}T_x = T_xT_{x'}$, for $x',x\in L$,

\smallskip

\item $e_{x'}Y_x = Y_xe_{x'} + \frac{1}{2} e_{[x',x]_L}$, for $x',x\in L$,

\smallskip

\item $e_{x'}T_x = T_xe_{x'}$, for $x',x\in L$,

\smallskip

\item $e_x^2 = 0$, for $x',x\in L$,

\smallskip

\end{enumerate}
where $T_x = Y_x-Z_x$; with differential defined by $d_1(e_x) = T_x$ for $x\in L$. Note that
$(D_*,d_*)$ is a complex of $H$-bimodules if we define $x\cdot W = Y_xW$ and $W\cdot x = WZ_x$ for
$x\in L$. Note that $D_* = T_k(Y_L\oplus Z_L \oplus E_L)/R$, where $R$ is the two sided ideal
generated by the relations $(1)-(6)$. Also note that from these relations it follows
that
\begin{align*}
d_n(e_{x_1}\cdots e_{x_n}) & = \sum_{i=1}^n (-1)^{i-1} Y_{x_i} e_{x_1}\cdots \wh{e_{x_i}}\cdots
e_{x_n}\\
& - \sum_{i=1}^n (-1)^{i-1} e_{x_1}\cdots \wh{e_{x_i}}\cdots e_{x_n}Z_{x_i}\\
& + \sum_{1\le i<j\le n} (-1)^{i+j} e_{[x_i,x_j]} e_{x_1}\cdots \wh{e_{x_i}} \cdots \wh{e_{x_j}}\cdots
e_{x_n},
\end{align*}
where, as usual, the symbol $\wh{e_{x_i}}$ means that the factor $e_{x_i}$ is omitted.

\smallskip

We now introduce a new gradation $p$ on $D_*$ by defining
$$
p(Y_i) = 0\qquad\text{and}\qquad p(T_i) = p(e_i)= 1.
$$
Let $\gamma_{*+1}\colon D_*\to D_{*+1}$ be the degree one derivation defined by
$$
\gamma(Y_x) = 0,\qquad\gamma(T_x) = e_x\qquad\text{and}\qquad \gamma(e_x) = 0.
$$
It is easy to see that
\begin{equation}
(\gamma\xcirc d + d\xcirc \gamma)(P) = p(P)P\quad\text{for $P$ a $p$-homogeneous element.}\label{eq1}
\end{equation}
Let $\sigma_0\colon H\to D_0$ be the algebra map defined by $\sigma_0(x)=Y_x$ and, for $n\ge 0$,
let $\sigma_{n+1}\colon D_n\to D_{n+1}$ be the map defined by
$$
\sigma_{n+1}(P)=\begin{cases}\frac{1}{p(P)}\gamma(P)&\text{if $p(P)>0$,}\\ 0&\text{if $p(P)=0$.}
\end{cases}
$$
From \eqref{eq1} it follows easily that $\sigma$ is a left $H$-linear contracting homotopy of
\begin{equation}
\xymatrix{H &{D_0} \lto_-{\mu} &{D_1} \lto_-{d_1} & {D_2}\lto_-{d_2} & {D_3}\lto_-{d_3} & \dots
\lto_-{d_4},}\label{eq2}
\end{equation}
where $\mu$ is the $H$-bimodule map defined by $\mu(1) = 1$.

\smallskip

We are going to show that there are transpositions $s_{D_n}\colon D_n\ot A\to A\ot D_n$ such that
\eqref{eq2}, endowed with them, is a complex in $\mathfrak{T}^s_A({}_H\mathcal{M}_H)$ which is
contractible as a complex in $\mathfrak{T}^s_A(\mathcal{V}ect)$, and such that each
$(D_n,s_{D_n})$ is projective relative to the family of all epimorphism in $\mathfrak{T}^s_A({}_H
\mathcal{M}_H)$ which split in $\mathfrak{T}^s_A(\mathcal{V}ect)$. In order to carry out our task
we need first to describe the transposition $s\colon H\ot A\to A\ot H$ in terms of a basis
$(x_i)_{i\in I}$ of $L$. For the sake of simplicity we  assume that $L$ is finite dimensional and
$I = \{1,\dots,r\}$ (however it is possible to work without this restriction).

\smallskip

Let $\alpha_i^j\colon A \to A$ be the maps defined by $s(x_i\ot a) = \sum_{j=1}^r \alpha_i^j(a)\ot
x_j$ and let $\ov{\alpha}$ be the matrix.
$$
\ov{\alpha} = \begin{pmatrix}
\alpha_1^1 &\dots &\alpha_1^r\\
\vdots &\ddots &\vdots\\
\alpha_r^1 &\dots &\alpha_r^r
\end{pmatrix}
$$
From \cite[Example 2.1.9]{G-G3}, we know that the maps $\alpha_i^j$ satisfy

\renewcommand{\descriptionlabel}[1]
{\textrm{#1}}
\begin{description}

\item[(a)] $\ov{\alpha}(1) = \ide$,

\item[(b)] $\ov{\alpha}(ab) = \ov{\alpha}(a)\ov{\alpha}(b)$,

\item[(c)] $\alpha_i^l\xcirc \alpha_j^m = \alpha_j^m\xcirc \alpha_i^l$, for all $i,j,l,m$,

\item[(d)] $s([x_i,x_j]_L\ot a) \sum_{l<m} \alpha_i^l\xcirc \alpha_j^m(a) - \alpha_i^m\xcirc \alpha_j^l(a)\ot
[x_l,x_m]_L$.

\end{description}
Furthermore, using that $s$ is bijective, it is easy to check that $\ov{\alpha}\in \GL_r(\End_k(A))$.

\smallskip

Since the maps $\alpha_i^j$ satisfy conditions $(a)$, $(b)$ and $(c)$, it follows from
\cite[Example 2.1.8]{G-G3} that there exists a unique transposition
$$
s_T\colon T_k(Y_L\oplus Z_L\oplus E_L)\ot A \to A\ot T_k(Y_L\oplus Z_L\oplus E_L)
$$
such that
\begin{align*}
& s_T(Y_i\ot A) = \sum_{j=1}^r \alpha_i^j(a)\ot Y_j,\\
& s_T(Z_i\ot A) = \sum_{j=1}^r \alpha_i^j(a)\ot Z_j,\\
& s_T(e_i\ot A) = \sum_{j=1}^r \alpha_i^j(a)\ot e_j,
\end{align*}
where $Y_i = Y_{x_i}$, $Z_i = Z_{x_i}$ and $e_i = e_{x_i}$, for all $i\in I$. Since conditions
$(c)$ and $(d)$ imply that $s_T(R) = R$, the map $s_T$ induces a transposition
$$
s_D\colon D\ot A \to A\ot D.
$$
Clearly $S_D(D_n\ot A) = A\ot D_n$ for each $n\ge 0$. Let $s_{D_n}\colon D_n\ot A\to A\ot D_n$ be
the map induced by $s_D$. It is easy to check that $(D_n,s_{D_n})\in \mathfrak{T}^s_A({}_H
\mathcal{M}_H)$, $d_n$ is an arrow in $\mathfrak{T}^s_A({}_H\mathcal{M}_H)$, $\sigma_n$ is an
arrow in $\mathfrak{T}^s_A(\mathcal{V}ect)$ and $(D_n,s_{D_n})$ is projective relative to the
family of all epimorphism in $\mathfrak{T}^s_A({}_H\mathcal{M}_H)$ which split in
$\mathfrak{T}^s_A(\mathcal{V}ect)$.

\begin{theorem}\label{th6.1} For $n\ge 2$ the Sweedler cohomology $\Ho^n(H,A,s)$ is the cohomology of the
cochain complex obtained applying the functor $\Xi$, introduced above Theorem~\ref{th5.3}, to the
Chevalley-Eilenberg resolution $\bigl((D_*, s_{D_*}),d_*\bigr)$.
\end{theorem}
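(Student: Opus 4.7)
The plan is to reduce the statement to Corollary~\ref{co5.4} and then invoke the standard uniqueness of relative derived functors, once we know that the augmented Chevalley-Eilenberg complex
\[
\xymatrix{(H,s) &(D_0,s_{D_0})\lto_-{\mu} &(D_1,s_{D_1})\lto_-{d_1} &(D_2,s_{D_2})\lto_-{d_2} &\dots\lto_-{d_3}}
\]
is a resolution of $(H,s)$ by objects that are relative projective in $\mathfrak{T}^s_A({}_H\mathcal{M}_H)$ with respect to the class of epimorphisms which split in $\mathfrak{T}^s_A(\mathcal{V}ect)$. By Corollary~\ref{co5.4}, for every $n\ge 2$ we have a canonical isomorphism $\Ho^n(H,A,s)\cong R^n\Xi(H,s)$, so it is enough to prove that $R^n\Xi(H,s)$ is computed by applying $\Xi$ to the truncated complex $(D_*,d_*)$.

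First I would collect the four structural facts asserted in the construction of $D_*$ (just before the theorem), making each into a separate verification: (i) $(D_n,s_{D_n})$ lies in $\mathfrak{T}^s_A({}_H\mathcal{M}_H)$; (ii) the maps $\mu$ and $d_n$ are morphisms in $\mathfrak{T}^s_A({}_H\mathcal{M}_H)$; (iii) the contracting homotopy $\sigma_n$ is a morphism in $\mathfrak{T}^s_A(\mathcal{V}ect)$, so that $\mu$ and the $d_n$ form an exact sequence which splits in $\mathfrak{T}^s_A(\mathcal{V}ect)$; (iv) each $(D_n,s_{D_n})$ is projective relative to the specified class of epimorphisms. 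For (i)--(iii) the point is that $s_D$ is induced from $s_T$ through the defining relations of $D_*$, so checking compatibility reduces to the generators $Y_i$, $Z_i$, $e_i$ and this follows from properties (a)--(d) of the matrix $\overline{\alpha}$ combined with the fact that $R$ is $s_T$-stable; in particular each $d_n$ is an $H$-bimodule derivation on generators and the contracting homotopy $\sigma$ was built from $\gamma$, which is $s_D$-compatible because $s_T(\gamma\otimes A)=(A\otimes\gamma)\xcirc s_T$ on generators.

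With these four facts in hand, the rest is a routine application of the comparison theorem for relative derived functors. Namely, any two resolutions of $(H,s)$ by relative projectives in $\mathfrak{T}^s_A({}_H\mathcal{M}_H)$ which are $\mathfrak{T}^s_A(\mathcal{V}ect)$-contractible are chain-homotopy equivalent through morphisms in $\mathfrak{T}^s_A({}_H\mathcal{M}_H)$. Applying $\Xi$ therefore yields canonically isomorphic cohomology; in particular the cohomology of $\Xi\bigl((D_*,s_{D_*}),d_*\bigr)$ coincides with $R^n\Xi(H,s)$, and hence, for $n\ge 2$, with $\Ho^n(H,A,s)$.

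The hard part, and essentially the only nontrivial work, is verification~(iv): checking that each $(D_n,s_{D_n})$ is relative projective. One natural way to handle this is to exhibit $(D_n,s_{D_n})$ as a direct summand, in $\mathfrak{T}^s_A({}_H\mathcal{M}_H)$, of a free-type object of the form $(H\otimes V\otimes H,\, \widetilde{s})$ where $V$ carries a transposition making the decomposition $s_{D_n}$-equivariant; relative projectivity of such free-type objects is automatic from the universal property (a splitting in $\mathfrak{T}^s_A(\mathcal{V}ect)$ can be tensored up to yield one in $\mathfrak{T}^s_A({}_H\mathcal{M}_H)$ because of the compatibility of $\widetilde{s}$ with the bimodule action). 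All other steps are formal consequences of the framework already set up in Sections~5 and~6.
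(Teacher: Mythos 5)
Your proposal is correct and follows essentially the same route as the paper: the paper's proof of Theorem~\ref{th6.1} simply invokes Theorems~\ref{th5.1} and \ref{th5.3} (equivalently Corollary~\ref{co5.4}) together with the discussion preceding the theorem, which consists of exactly the facts you list as (i)--(iv) — that each $(D_n,s_{D_n})$ lies in $\mathfrak{T}^s_A({}_H\mathcal{M}_H)$, that $d_*$ and $\mu$ are morphisms there, that $\sigma_*$ contracts the augmented complex in $\mathfrak{T}^s_A(\mathcal{V}ect)$, and that each $(D_n,s_{D_n})$ is relative projective — after which the comparison theorem for relative derived functors gives the result. Your sketch of how to verify (i)--(iv) on generators via the properties of $\ov{\alpha}$ and the $s_T$-stability of the ideal $R$ just fills in details the paper leaves as ``easy to check.''
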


\begin{proof} It follows from the above discussion and Theorems~\ref{th5.1} and
\ref{th5.3}.
\end{proof}

\subsection{Comparison maps} The $H$-bimodule morphisms $\varphi_n\colon D_n\to
H\ot\ov{H}^n\ot H$ and $\phi_n\colon H\ot\ov{H}^n\ot H\to D_n$, recursively defined by
\begin{align*}
&\varphi_0(1) = 1\ot 1,\\
&\varphi_n(e_{i_1}\cdots e_{i_n}) = 1\ot \varphi_{n-1}\xcirc d_n(e_{i_1}\dots e_{i_n}),\\
&\phi_0(1\ot 1) = 1, \\
&\phi_n(1\ot P_1\ot\cdots\ot P_n\ot 1) = \sigma_n\xcirc\phi_{n-1}\xcirc b'_n(1\ot P_1\ot\cdots\ot P_n\ot 1),
\end{align*}
are chain complexes morphisms from $((D_*,s_{D_*}),d_*)$ to $((H\ot \ov{H}^{\ot *}\ot
H,\ov{s}_*),b')$ and from $((H\ot \ov{H}^{\ot *}\ot H,\ov{s}_*),b')$ to $((D_*,s_{D_*}),d_*)$,
respectively.

\begin{proposition}\label{pr6.2} We have:
\begin{align*}
& \varphi_n(e_{i_1}\cdots e_{i_n}) = \sum_{\tau\in S_n} \sg(\tau)\ot x_{i_{\tau(1)}}\ot\cdots\ot
x_{i_{\tau(n)}} \ot 1\\
\intertext{and}
& \phi_n(1\ot x_{i_1}\ot\cdots\ot x_{i_n} \ot 1) = \frac{1}{n!} e_{i_1}\cdots e_{i_n}.
\end{align*}
\end{proposition}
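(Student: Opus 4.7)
I will prove both formulas by simultaneous induction on $n$, the base case $n=0$ being immediate from the definitions $\varphi_0(1) = 1\ot 1$ and $\phi_0(1\ot 1) = 1$, which match the empty $S_0$-sum and the empty $e$-product respectively.

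For $\varphi_n$, I would begin from the recursion $\varphi_n(e_{i_1}\cdots e_{i_n}) = 1\ot\varphi_{n-1}\xcirc d_n(e_{i_1}\cdots e_{i_n})$ and expand $d_n(e_{i_1}\cdots e_{i_n})$ via the explicit formula listed in Subsection~6.1, producing three families of summands: the $Y$-terms $(-1)^{k-1}Y_{x_{i_k}}\cdot(e_{i_1}\cdots\wh{e_{i_k}}\cdots e_{i_n})$, the $Z$-terms $-(-1)^{k-1}(e_{i_1}\cdots\wh{e_{i_k}}\cdots e_{i_n})\cdot Z_{x_{i_k}}$, and the bracket terms $(-1)^{k+l}e_{[x_{i_k},x_{i_l}]}\cdot(e_{i_1}\cdots\wh{e_{i_k}}\cdots\wh{e_{i_l}}\cdots e_{i_n})$. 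Applying $\varphi_{n-1}$ via $H$-bilinearity and the inductive hypothesis, then prepending $1\ot$, the $Y$-terms produce those permutations $\tau\in S_n$ with $\tau(1)$ taking each value in turn while $\tau|_{\{2,\ldots,n\}}$ ranges over the $S_{n-1}$-antisymmetrization on the remaining indices; the $Z$-terms contribute the mirror-image summands placing the index $i_k$ adjacent to the right-hand $1$; and the bracket terms, after expanding $[x_{i_k},x_{i_l}] = x_{i_k}x_{i_l} - x_{i_l}x_{i_k}$ and absorbing into the inductive $S_{n-1}$-antisymmetrization, precisely fill in the signed transpositions that are missing from the $Y$-contribution to complete the full $S_n$-sum.

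For $\phi_n$, I would expand $b'_n(1\ot x_{i_1}\ot\cdots\ot x_{i_n}\ot 1)$ into its $n+1$ standard bar summands. The outer two, by $H$-bilinearity and induction, evaluate to $\frac{1}{(n-1)!}Y_{x_{i_1}}e_{i_2}\cdots e_{i_n}$ and $\frac{(-1)^n}{(n-1)!}e_{i_1}\cdots e_{i_{n-1}}Z_{x_{i_n}}$. The interior summands involve non-primitive entries $x_{i_j}x_{i_{j+1}}\in\ov{H}$; to compute $\phi_{n-1}$ on these one must strengthen the induction by simultaneously establishing that $\phi$ is a chain map, i.e.\ $d_{n-1}\xcirc\phi_{n-1} = \phi_{n-2}\xcirc b'_{n-1}$, which follows from the defining recursion $\phi_{n-1} = \sigma_{n-1}\xcirc\phi_{n-2}\xcirc b'_{n-1}$ together with the homotopy identity $d\xcirc\sigma + \sigma\xcirc d = \ide$ on positive $p$-degree (a direct consequence of the relation $(\gamma\xcirc d + d\xcirc\gamma)(P) = p(P)P$ for $p$-homogeneous $P$) and the vanishing $b'_{n-1}\xcirc b'_n = 0$. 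With the chain-map property in hand, the interior contributions can be rewritten in forms reducible to the inductive hypothesis. After collecting all pieces, the input to the outer $\sigma_n$ is $p$-homogeneous of $p$-degree $n$ with exactly one factor of the form $T_{x_{i_k}}$; applying $\sigma_n = \tfrac{1}{p}\gamma$, the derivation $\gamma$ converts each $T_{x_{i_k}}$ into $e_{i_k}$, and the factor $\tfrac{1}{n}$ from the $p$-normalization combines with the inductive $\tfrac{1}{(n-1)!}$ to yield the desired $\tfrac{1}{n!}$.

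The main obstacle is the treatment of the non-primitive middle entries $x_{i_j}x_{i_{j+1}}$ in the $\phi_n$-recursion. Resolving it requires proving the chain-map property of $\phi$ in parallel with the explicit formula and exploiting the contracting-homotopy identity for $\sigma$. The secondary difficulty, in the $\varphi_n$ argument, is the combinatorial matching of the signs $(-1)^{k+l}$ of the bracket terms against the signs of the missing transpositions in $S_n$; this is a direct but delicate sign-tracking verification, and it is the source of the factor of $2$ appearing in the relations (1), (2), (4) of the differential graded algebra $D_*$ needed for the bookkeeping to close up correctly.
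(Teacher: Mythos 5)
Your plan for the second formula has a gap at exactly the decisive point. After expanding $b'_n(1\ot x_{i_1}\ot\cdots\ot x_{i_n}\ot 1)$ into its $n+1$ faces, the interior faces carry a middle entry $x_{i_j}x_{i_{j+1}}$, and the inductive hypothesis says nothing about $\phi_{n-1}$ on such elements; the chain-map property only relates $d\xcirc\phi_{n-1}$ to $\phi_{n-2}\xcirc b'$ and does not determine the value of $\phi_{n-1}$ itself on these terms, so your sentence ``the interior contributions can be rewritten in forms reducible to the inductive hypothesis'' is precisely the unproved step, and it is where all the difficulty sits. The paper never evaluates those terms at all: it groups every face except the last into $b'(1\ot x_{i_1}\ot\cdots\ot x_{i_n})\ot 1$, notes that on any element whose rightmost tensor factor is $1$ the defining recursion, the $H$-bimodule extension and the left $H$-linearity of $\sigma$ give $\phi=\sigma\xcirc\phi\xcirc b'$, and then annihilates this entire block using $\sigma\xcirc\sigma=0$; only the last face survives, and it is handled by right $H$-linearity and the inductive formula, yielding $\frac{(-1)^n}{(n-1)!}\,\sigma\bigl(e_{i_1}\cdots e_{i_{n-1}}Z_{i_n}\bigr)$. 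One then splits $Z_{i_n}=Y_{i_n}-T_{i_n}$ (the argument of $\sigma$ is not $p$-homogeneous ``with exactly one $T$'', as you assert): the $Y$-part dies under $\gamma$, and the $T$-part, via $\sigma=\frac1n\gamma$, produces the factor $\frac1{n!}$.

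Your account of the first formula is also miscounted. By the inductive hypothesis, $\varphi_{n-1}$ of a pure $e$-monomial has leftmost $H$-factor equal to $1$, and the operation $1\ot(-)$ in the recursion pushes that factor into $\ov{H}=H/k$; hence every $Z$-face term and every bracket term is annihilated, while the $Y$-faces alone already assemble into the full signed sum over $S_n$ (placing $x_{i_k}$ first costs exactly the sign $(-1)^{k-1}$). If, as you describe, the $Z$-faces contributed ``mirror-image'' summands with $x_{i_k}$ adjacent to the right-hand $1$ and the bracket terms filled in ``missing transpositions'', the outcome could not be the stated antisymmetrization, which has $1$ in the last slot and no bracket entries. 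Likewise, the coefficients $\frac12$ in the defining relations of $D_*$ play no role in this sign bookkeeping.
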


\begin{proof} The first equality it follows easily by induction on $n$. We now prove the second
one also by induction on $n$. The cases $n = 0$ and $n = 1$ are direct and very simple. Assume
that $n>1$. By definition
$$
\phi(1\ot x_{i_1}\ot\cdots\ot x_{i_n} \ot 1) = \sigma\xcirc \phi\xcirc b'(1\ot x_{i_1}\ot\cdots\ot
x_{i_n} \ot 1).
$$
Since $\sigma$ is left $H$-linear and $\sigma\xcirc \sigma = 0$,
$$
\sigma\xcirc \phi\bigl(b'(1\ot x_{i_1}\ot\cdots\ot x_{i_n})\ot 1\bigr)  = \sigma\xcirc \sigma
\xcirc \phi \xcirc b'\bigl(b'(1\ot x_{i_1}\ot\cdots\ot x_{i_n}) \ot 1\bigr) = 0.
$$
So,
\begin{align*}
\phi(1\ot x_{i_1}\ot\cdots\ot x_{i_n} \ot 1) & = (-1)^n \sigma\xcirc \phi(1\ot x_{i_1}\ot\cdots\ot
x_{i_n})\\
& = \frac{(-1)^n}{(n-1)!} \sigma\bigl(e_{i_1}\cdots e_{i_{n-1}}Z_{i_n}\bigr)\\
& = \frac{(-1)^n}{(n-1)!} \sigma\bigl(e_{i_1}\cdots e_{i_{n-1}}(Z_{i_n}-Y_{i_n})\bigr)\\
& + \frac{(-1)^n}{(n-1)!} \sigma\bigl(e_{i_1}\cdots e_{i_{n-1}}Y_{i_n}\bigr)\\
& = \frac{1}{n!} e_{i_1}\cdots e_{i_n}.
\end{align*}
as we want.
\end{proof}

\section{Braided crossed products of $k[X_1,X_2]$}
Let $H=k[X_1,X_2]$ be the polynomial $k$-algebra in two variables endowed with the usual Hopf
algebra structure and let $(A,s)$ be a left $H$-module algebra. Let $\al_i^j\colon A\to A$ for
$1\le i,j\le 2$, such that $s(X_i\ot a) = \al_i^1(a)\ot X_1+\al_i^2(a)\ot X_2$ and let
$\ov{\alpha}(a)$ be as in Section~6.

\smallskip

Let $\rho\colon H\ot A\to A$ denote the action of $H$ on $(A,s)$. We write $\rho(h\ot a) = h\cdot
a$ and $X_i\cdot a = \beta_i(a)$. Set
$$
\ov{\be}(a) = \begin{pmatrix}\beta_1(a)\\\beta_2(a)\end{pmatrix}.
$$
Note that the conditions
\begin{align*}
& \rho\xcirc(H\ot\mu) = \mu\xcirc (\rho\ot\rho)\xcirc (H\ot s\ot A)\xcirc (\Delta\ot A \ot A),\\
& h\cdot 1 = \epsilon(h)1\quad\text{for all $h\in H$},\\
& h\cdot (l\cdot a) = (hl)\cdot a \quad\text{for all $h,l\in H$ and $a\in A$,}\\
& s\xcirc (H\ot \rho) = (\rho\ot H)\xcirc (H\ot s)\xcirc (c\ot A),
\end{align*}
are equivalent to
\begin{align}
& \ov{\be}(ab) = \ov{\be}(a)b + \ov{\al}(a)\ov{\be}(b),\label{eq3}\\
& \beta_2\xcirc \beta_1 = \beta_1\xcirc \beta_2,\label{eq4}\\
& \beta_l\xcirc \alpha_i^j = \alpha_i^j\xcirc \beta_l,\quad\text{for all
$i,j,l\in\{1,2\}$.}\label{eq5}
\end{align}

\subsection{A simple resolution} For the Hopf algebra $H=k[X_1,X_2]$, the relative resolution
of $(H,s)$ constructed in Section~6 becomes
\begin{equation}
\xymatrix{(H,s) &({D_0},s_{D_0}) \lto_-{\mu} &{(D_1,s_{D_1}}) \lto_-{d_1} &
{(D_2,s_{D_2}})\lto_-{d_2} & 0,\lto}\label{eq6}
\end{equation}
where $(D_*,d_*)$ is the differential graded $k$-algebra generated  by variables $Y_1$, $Y_2$,
$Z_1$, $Z_2$ in degree $0$ and $e_1$, $e_2$ in degree $1$, subject to the following relations

\begin{enumerate}

\smallskip

\item $Y_1$, $Y_2$, $Z_1$, $Z_2$ commute between them,

\smallskip

\item $e_iY_j = Y_je_i$ and $e_iZ_j = Z_je_i$ for $i,j\in \{1,2\}$,

\smallskip

\item $e_1^2 = e_2^2= 0$ and $e_1e_2 = - e_2e_1$,

\smallskip

\end{enumerate}

\noi with differential $d_*$ defined by $d_1(e_i) = Y_i-Z_i$ for $i\in \{1,2\}$, and $\mu$ is the
$H$-bimodule map given by $\mu(1) = 1$. In Section~6 it was given explicit formulas for and a contracting
homotopy $\si_0$, $\si_1$ and $\si_2$ of~\eqref{eq6}.

\subsection{Computing the cohomology} Applying the functor $\Xi$ to $((D_*,s_{D_*}),d_*)$ we
obtain the cochain complex
\begin{equation}
\xymatrix{0\rto & {\Xi(D_0)} \rto^-{d^1} &{\Xi(D_1)} \rto^-{d^2} & {\Xi(D_2)}\rto & 0.}\label{eq7}
\end{equation}
Let $U$ be a set of generators of $A$ as a $k$-algebra. From Propositions~\ref{pr5.5} and
\ref{pr5.6} it follows immediately that

\begin{align*}
\Xi(D_0) & = \{f\in \Hom_{H^e}(D_0,\Z({}^s\! A)^+): f(1)\in \Z(A)\}\simeq {}^s\! A\cap \Z(A),\\
\Xi(D_1) & = \{f\in \Hom_{H^e}(D_1,\Z({}^s\! A)^+): f(e_i)a = \al_i^1(a)f(e_1)+
\al_i^2(a)f(e_2)\,\,\,\forall a\in U\}\\
&\simeq \{(b_1,b_2)\in  \Z({}^s\! A)\times \Z({}^s\! A): b_ia = \al_i^1(a)b_1+
\al_i^2(a)b_2\,\,\,\forall a\in U\},\\
\Xi(D_2) & = \{f\in \Hom_{H^e}\!(D_2,\Z({}^s\! A)^+)\!:\! f(e_1e_2)a = (\al_1^1\xcirc
\al_2^2-\al_1^2\xcirc \al_2^1)(a)f(e_1e_2)\,\,\forall a\in U\}\\
& \simeq \{b\in \Z({}^s\! A): ba = (\al_1^1\xcirc \al_2^2-\al_1^2\xcirc \al_2^1)(a)b
\,\,\,\forall a\in U\}.
\end{align*}
The boundary maps are given by
$$
d^1(b) = (\beta_1(b),\beta_2(b))\quad\text{and}\quad d^2(b_1,b_2) =
\beta_1(b_2)-\beta_2(b_1).
$$
Let $(C_s^*,\delta^*)$ be the complex introduced at the beginning of Section~5. The map $\phi_*$
induces a quasi-isomorphism $\phi^*\colon (\Xi(D_*), d^*) \to(C_s^*, \delta^*)$. By
Proposition~\ref{pr6.2},
$$
\phi^2(b)(X_1\ot X_2) = - \phi^2(b)(X_2\ot X_1) = \frac{1}{2}b.
$$
Let us write $f = \exp(\phi^2(b))$. It is easy to check that
\begin{equation}
f(X_1\ot X_2) = -f(X_2\ot X_1)= \frac{1}{2}b.\label{eq8}
\end{equation}
From the formula $(7)$ in \cite[Section~10]{G-G1} it follows easily that $A\#_f H$ is generated
the elements $a\in A$ and $W_i = 1\# X_i$ with $i=1,2$. Using this, the formulas for $s$ and
$\rho$ obtained at the beginning of this section and equality~\eqref{eq8}, it is easy to see that
$A\#_f H$ is isomorphic to the algebra with underlying left $A$-module structure $A[W_1,W_2]$ and
multiplication given by
$$
W_ia = \al_i^1(a)W_1+\al_i^2(a)W_2+\be_i(a)\quad\text{and}\quad W_1W_2 - W_2W_1 = b,
$$
where $i$ runs on $\{1,2\}$ and $a\in A$.

\smallskip

\subsection{A concrete example} In this subsection given a matrix $B$ we let $B_{ij}$ denote its
$(i,j)$ entry. Assume that $A = k[Y]$ and that there exists a matrix $Q\in  \GL_2(k)$, such
that
$$
\ov{\al}(Y) = QY = \begin{pmatrix}Q_{11}Y & Q_{12}Y \\Q_{21}Y & Q_{22}Y\end{pmatrix}.
$$
Since $\ov{\alpha}(ab)=\ov{\alpha}(a)\ov{\alpha}(b)$ for all $a,b\in A$, this implies that
$\ov{\al}(Y^n) = Q^nY^n$. Hence,
$$
\al_i^j(Y^n) = (Q^n)_{ij}Y^n = \sum_{j_1,\dots,j_n} Q_{i,j_1}Q_{j_1\!,j_2} \dots
Q_{j_{n\!-\!1}\!,j_n}Q_{j_n\!,j} Y^n\quad\text{for all $i,j$.}
$$
We are going to characterize the maps $\beta_l\colon k[Y]\to k[Y]$ ($l=1,2$),
satisfying~\eqref{eq3}, \eqref{eq4} and \eqref{eq5}. An inductive argument shows that
condition~\eqref{eq3} hold if and only if $\ov{\beta}(1)=0$ and
\begin{equation}
\ov{\be}(Y^n) = (\ide + Q +\cdots+ Q^{n-1})Y^{n-1}\ov{\be}(Y)\quad\text{for $n\ge 1$}.\label{eq9}
\end{equation}
Write $\be_l(Y) = \sum_{u=0}^{\infty} b_{u}^{(l)}Y^{u}$ and $Q^{(n)} = \ide + Q +\cdots+ Q^{n-1}$
(of course $b_{u}^{(l)}=0$ except for a finite number of terms). Next, we are going to determine
necessary and sufficient conditions in order that also $\beta_l\xcirc \alpha_i^j = \alpha_i^j\xcirc
\beta_l$ for all $i,j,l\in\{1,2\}$. It is immediate that $\beta_l\xcirc \alpha_i^j(1) =
\alpha_i^j\xcirc \beta_l(1)$. Furthermore, by equation~\eqref{eq9}, we have
\begin{align*}
\be_l(\al_i^j(Y^n)) & = (Q^n)_{ij} \be_l(Y^n) \\
& = (Q^n)_{ij} (Q^{(n)}_{l1}Y^{n-1}\be_1(Y)+Q^{(n)}_{l2}Y^{n-1}\be_2(Y))\\
& = (Q^n)_{ij} \sum_{u=-1}^{\infty} (Q^{(n)}_{l1} b_{u+1}^{(1)} + Q^{(n)}_{l2} b_{u+1}^{(2)})
Y^{n+u}
\end{align*}
and
\begin{align*}
\al_i^j(\be_l(Y^n))& = \al_i^j\bigl(Q^{(n)}_{l1}Y^{n-1}\be_1(Y)+Q^{(n)}_{l2}Y^{n-1}\be_2(Y)\bigr)\\
&= \sum_{u=-1}^{\infty}\al_i^j\bigl((Q^{(n)}_{l1}b_{u+1}^{(1)}+Q^{(n)}_{l2}b_{u+1}^{(2)})Y^{n+u}\bigr)\\
&= \sum_{u=-1}^{\infty} (Q^{(n)}_{l1}b_{u+1}^{(1)}+Q^{(n)}_{l2}b_{u+1}^{(2)})(Q^{n+u})_{ij}Y^{n+u},
\end{align*}
for  $n\ge 1$. Hence, $\beta_l\xcirc \alpha_i^j = \alpha_i^j\xcirc \beta_l$ for all $i,j,l\in\{1,2\}$ if and
only if
\begin{equation}
\bigl(Q^{(n)}_{l1} b_{u+1}^{(1)} + Q^{(n)}_{l2} b_{u+1}^{(2)}\bigr)(Q^u-\ide) = 0\quad\text{ for
all $n\ge 1$, $l\in \{1,2\}$ and $u\ge -1$.}\label{eq10}
\end{equation}
Note that the case $n = 1$ is clearly equivalent to
\begin{equation}
Q^u=\ide\quad\text{or}\quad b_{u+1}^{(1)} = b_{u+1}^{(2)} = 0 \qquad\text{for all $u\ge -1$.}
\label{eq11}
\end{equation}
Conversely, it is immediate that from these equalities it follows~\eqref{eq10}. It remains to
determine necessary and sufficient conditions in order that also $\be_1\xcirc \be_2 = \be_2\xcirc
\be_1$. Since $\be_1\xcirc \be_2(1) = \be_2\xcirc \be_1(1)$ we only need to compare $\be_1\xcirc
\be_2(Y^n)$ with $\be_2\xcirc \be_1(Y^n)$ for $n\ge 1$. We consider several cases.

\smallskip

\noindent 1) $Q^n \ne \ide$ for all $n\in \mathds{N}$:\enspace In this case condition~\eqref{eq11}
implies that there exist $b^{(1)}, b^{(2)}\in k$ such that $\be_1(Y) = b^{(1)}Y$ and $\be_2(Y) =
b^{(2)}Y$, and so by equation~\eqref{eq9}
$$
\be_2\xcirc \be_1(Y^n) =  (Q_{21}^{(n)}b^{(1)} +Q_{22}^{(n)}b^{(2)})(Q_{11}^{(n)}b^{(1)}
+Q_{12}^{(n)}b^{(2)})Y^n = \be_1\xcirc \be_2(Y^n),
$$
for all $n\ge 1$.

\smallskip

\noindent 2) $Q = \ide$:\enspace In this case $Q^{(n)} = n\ide$, and so by equation~\eqref{eq9}
\begin{align*}
& \be_2\xcirc \be_1(Y^n) = \sum_{u=0}^{\infty} n b_u^{(1)}\be_2(Y^{n+u-1}) = \sum_{u=0}^{\infty}
\sum_{v=0}^{\infty}  n(n+u-1) b_u^{(1)} b_v^{(2)}Y^{n+u+v-2}\\
\intertext{and}
& \be_1\xcirc \be_2(Y^n) = \sum_{v=0}^{\infty} n b_v^{(2)}\be_1(Y^{n+v-1}) = \sum_{v=0}^{\infty}
\sum_{u=0}^{\infty}  n(n+v-1) b_u^{(1)} b_v^{(2)} Y^{n+u+v-2}.
\end{align*}
Hence $\be_2\xcirc \be_1 = \be_1\xcirc \be_2$ if and only if
\begin{equation}
\sum_{u+v=r} (u-v) b_u^{(1)} b_v^{(2)} = 0 \quad\text{for all $r\ge 0$.}\label{eq12}
\end{equation}
These conditions are obviously satisfied if $\beta_1=0$ or $\beta_2=0$. Assume that $\beta_1\neq
0$ and $\beta_2\neq 0$. Let $i_1=\min\{i:b_i^{(1)}\neq 0\}$ and $i_2=\min\{i:b_i^{(2)}\neq 0\}$.
From \eqref{eq12} it follows that $(i_1-i_2)b_{i_1}^{(1)} b_{i_2}^{(2)} = 0$, which implies that
$i_1 = i_2$. Write $c = b_{i_1}^{(2)}/b_{i_1}^{(1)}$. We claim that $b_i^{(2)} = c b_i^{(1)}$ for
all $i\ge i_1$. Suppose this fact is true for $i \in \{i_1,\dots,j-1\}$. Then, again
by~\eqref{eq12}, we have
\begin{align*}
0 & = \sum_{u+v=i_1+j} (u-v) b_u^{(1)} b_v^{(2)}\\
& = (i_1-j) b_{i_1}^{(1)} b_j^{(2)} + c \sum_{u+v=i_1+j\atop v<j} (u-v) b_u^{(1)} b_v^{(1)}\\
& = (i_1-j) b_{i_1}^{(1)} b_j^{(2)} + c (j-i_1) b_j^{(1)} b_{i_1}^{(1)},
\end{align*}
which implies that $b_j^{(2)} = cb_j^{(1)}$. Conversely, it is easy to check that if $b_i^{(2)} =
c b_i^{(1)}$ for all $i\ge i_1$, then the equation~\eqref{eq12} is satisfied.

\smallskip

\noindent 3) $Q\ne \ide$ has finite order:\enspace Let $m>1$ be the order of $Q$. By
condition~\eqref{eq11}, we know that $\be_1(Y),\be_2(Y)\in Yk[Y^m]$. Since $X^m-1$ has simple
roots, $Q$ is diagonalizable. By mean of a linear change of variables in $k[X_1,X_2]$ we can
replace $Q$ by a diagonal matrix whose entries  are $m$-th roots of unity (so we are replacing
$X_1$ and $X_2$ for appropriate linear combinations of them. For simplicity we also call $X_1$ and
$X_2$ the new variables and we keep the name $Q$ for the new matrix associated with $s$ and $\beta_1,
\beta_2$ for the new $k$-linear endomorphisms of $k[Y]$ defining the action of $H$ on $(k[Y],s)$).
Since $Q$ is diagonal the matrices $Q^{(n)}$'s are also, and if $n = mq_n+r_n$, then $Q^{(n)} =
q_n Q^{(m)} + Q^{(r_n)}$. Hence, by equation~\eqref{eq9},
\begin{align*}
\be_2\xcirc \be_1(Y^n) & = \sum_{u=0}^{\infty} Q^{(n)}_{11} b_{mu+1}^{(1)}\be_2(Y^{n+mu})\\
& = \sum_{u=0}^{\infty}\sum_{v=0}^{\infty} Q^{(n)}_{11}Q^{(n+mu)}_{22} b_{mu+1}^{(1)}
b_{mv+1}^{(2)}Y^{n+mu+mv}\\
& =\sum_{u=0}^{\infty}\sum_{v=0}^{\infty}q_n(q_n\!+\!u)Q^{(m)}_{11}Q^{(m)}_{22}
b_{mv+1}^{(2)} b_{mu+1}^{(1)}Y^{n+mu+mv}\\
& +\sum_{u=0}^{\infty}\sum_{v=0}^{\infty}(q_n\!+\!u)Q^{(r_n)}_{11}Q^{(m)}_{22} b_{mu+1}^{(1)}
b_{mv+1}^{(2)}Y^{n+mu+mv}\\
& +\sum_{u=0}^{\infty}\sum_{v=0}^{\infty}q_nQ^{(m)}_{11}Q^{(r_n)}_{22} b_{mu+1}^{(1)}
b_{mv+1}^{(2)}Y^{n+mu+mv}\\
&+\sum_{u=0}^{\infty}\sum_{v=0}^{\infty} Q^{(r_n)}_{11}Q^{(r_n)}_{22}  b_{mu+1}^{(1)}
b_{mv+1}^{(2)}Y^{n+mu+mv},\\
\intertext{and similarly}
\be_1\xcirc \be_2(Y^n) & = \sum_{u=0}^{\infty}\sum_{v=0}^{\infty}q_n(q_n\!+\!v)Q^{(m)}_{11}
Q^{(m)}_{22}b_{mu+1}^{(1)}b_{mv+1}^{(2)}Y^{n+mu+mv}\\
& +\sum_{u=0}^{\infty}\sum_{v=0}^{\infty}(q_n\!+\!v)Q^{(m)}_{11} Q^{(r_n)}_{22}
b_{mu+1}^{(1)}
b_{mv+1}^{(2)} Y^{n+mu+mv}\\
& +\sum_{u=0}^{\infty}\sum_{v=0}^{\infty} q_n Q^{(r_n)}_{11}Q^{(m)}_{22} b_{mu+1}^{(1)}
b_{mv+1}^{(2)}Y^{n+mu+mv}\\
&+ \sum_{u=0}^{\infty}\sum_{v=0}^{\infty} Q^{(r_n)}_{11} Q^{(r_n)}_{22} b_{mu+1}^{(1)}
b_{mv+1}^{(2)} Y^{n+mu+mv}.
\end{align*}
So $\be_2\xcirc \be_1(Y^n) = \be_1\xcirc \be_2(Y^n)$ if and only if
\begin{align*}
0 &= \sum_{u=0}^{\infty}\sum_{v=0}^{\infty}q_n(u\!-\!v)Q^{(m)}_{11}Q^{(m)}_{22}
b_{mv+1}^{(2)} b_{mu+1}^{(1)}Y^{n+mu+mv}\\
& +\sum_{u=0}^{\infty}\sum_{v=0}^{\infty}uQ^{(r_n)}_{11}Q^{(m)}_{22} b_{mu+1}^{(1)}
b_{mv+1}^{(2)}Y^{n+mu+mv}\\
& -\sum_{u=0}^{\infty}\sum_{v=0}^{\infty}vQ^{(m)}_{11} Q^{(r_n)}_{22}b_{mu+1}^{(1)}
b_{mv+1}^{(2)} Y^{n+mu+mv}.
\end{align*}
If $Q_{11}$ and $Q_{22}$ are both different than~$1$, then $Q^{(m)}_{11} = Q^{(m)}_{22} = 0$ and
the above expression vanishes. It remains to consider the case $Q_{11} \ne Q_{22}$ and $1\in
\{Q_{11},Q_{22}\}$. Without loose of generality we can consider that $Q_{22} = 1$. In this case
the above equality becomes
\begin{equation}
0 = \sum_{u=0}^{\infty}\sum_{v=0}^{\infty} u m Q^{(r_n)}_{11} b_{mu+1}^{(1)} b_{mv+1}^{(2)}
Y^{n+mu+mv}.\label{eq13}
\end{equation}
From all these facts it follows that it must be
$$
\text{$\beta_1\in Yk[Y^m]$ and $\beta_2=0$,}\quad\text{or $\beta_1(Y)
= b^{(1)} Y$ and $\beta_2\in Yk[Y^m]\setminus\{0\}$}.
$$

\subsubsection{Classification of the crossed products $k[Y]\# k[X_1,X_2]$} By the general
theory of braided crossed products, we know that the underlying vector space of $k[Y]\#
k[X_1,X_2]$ is $k[Y,W_1,W_2]$, where $W_i = 1\# X_i$, and $Y$, $W_1$ and $W_2$ generate $k[Y]\#
k[X_1,X_2]$ as a $k$-algebra. Next, we classify these crossed products in each of the cases considered
above. To carry out this task we use the complex~\eqref{eq7}. We assume that $Q$ is a Jordan Matrix
(if this is not the case, but $k$ is algebraically closed, then we can replace $X_1$ and $X_2$ by
convenient linear combinations of them, in such a way that the matrix associated with $s$ let be a
Jordan Matrix).

\smallskip

\noindent 1) $Q^n \ne \ide$ for all $n\in \mathds{N}$:\enspace We know that there exist $b^{(1)}$
and $b^{(2)}$ in $k$ such that $\be_1(Y) = b^{(1)}Y$ and $\be_2(Y) = b^{(2)}Y$. There are two
possibilities:

\begin{description}

\item[1a.] $Q = \begin{pmatrix} q & 1\\ 0 & q \end{pmatrix}$  with $q\ne 0$,

\smallskip

\item[1b.] $Q = \begin{pmatrix} q_1 & 0\\ 0 & q_2\end{pmatrix}$ with $q_1,q_2\in k\setminus
\{0\}$ and $q_1$ or $q_2$ a non root of $1$.

\end{description}

\noindent We consider first the case~1a. An easy computation shows that $A^s = k$. From this it
follows immediately that $d^2 =0$ and
$$
\Xi(D_2) = \begin{cases} 0 & \text{if $q^2\ne 1$,}\\ k & \text{if $q^2 = 1$.}
\end{cases}
$$
Hence; if $q^2\ne 1$, then the multiplicative structure of $k[Y]\# k[X_1,X_2]$ is determined by
the relations
$$
W_1Y = qYW_1 + YW_2 + b^{(1)}Y,\quad W_2Y = qYW_2 + b^{(2)}Y\quad\text{and}\quad
W_1W_2-W_2W_1=0,
$$
with $b^{(1)},b^{(2)}\in k$; and if $q^2 = 1$, then it is determined by the relations
$$
W_1Y = qYW_1 + YW_2 + b^{(1)}Y,\quad W_2Y = qYW_2 + b^{(2)}Y\quad\text{and}\quad
W_1W_2-W_2W_1=\lambda,
$$
with $b^{(1)},b^{(2)},\lambda \in k$.

We consider now the case~1b. An easy computation shows that $A^s = k$. From this it
follows immediately that $d^2 =0$ and
$$
\Xi(D_2) = \begin{cases} 0 & \text{if $q_1q_2\ne 1$,}\\ k & \text{if $q_1q_2 = 1$.}
\end{cases}
$$
Hence; if $q_1q_2\ne 1$, then the multiplicative structure of $k[Y]\# k[X_1,X_2]$ is determined by
the relations
$$
W_1Y = q_1YW_1 + b^{(1)}Y,\quad W_2Y = q_2YW_2 + b^{(2)}Y\quad\text{and}\quad
W_1W_2-W_2W_1=0,
$$
with $b^{(1)},b^{(2)}\in k$; and if $q_1q_2 = 1$, then it is determined by the relations
$$
W_1Y = q_1YW_1 + b^{(1)}Y,\quad W_2Y = q_2YW_2 + b^{(2)}Y\quad\text{and}\quad
W_1W_2-W_2W_1=\lambda,
$$
with $b^{(1)},b^{(2)},\lambda \in k$.

\smallskip

\noindent 2) $Q = \ide$ (The classical case):\enspace By the discussion above we know that
$\beta_1=0$ or there exists $c\in k$ such that $\be_2(Y) = c\be_1(Y)$. An easy computation shows
that
\begin{align*}
&\Xi(D_1) = k[Y]\times k[Y],\quad \Xi(D_2) = k[Y]
\intertext{and}
& d^2(Y^r,Y^s) = sY^{s-1}\be_1(Y)- rY^{r-1}\be_2(Y) = \begin{cases} -rY^{r-1}\beta_2(Y) &\text{ if
$\beta_1=0$,}\\ (sY^{s-1}- rY^{r-1}c)\be_1(Y) & \text{ if $\beta_1\ne 0$},
 \end{cases}
\end{align*}
where for the computing of $d^2$ we have used~\eqref{eq9}. So,
$$
\Ho^2(\Xi(D_*),d^*) =\begin{cases} \dfrac{k[Y]}{\langle  \be_2(Y) \rangle} &\text{ if
$\beta_1=0$,}\\[10pt] \dfrac{k[Y]}{\langle  \be_1(Y) \rangle} &\text{ if $\beta_1\ne 0$,}. \end{cases}
$$
Hence; if $\beta_1\ne 0$, then the multiplicative structure of $k[Y]\# k[X_1,X_2]$ is determined by the
relations
$$
W_1Y = YW_1 + \be_1(Y),\quad W_2Y = YW_2 + c\be_1(Y)\quad\text{and}\quad W_1W_2-W_2W_1= R(Y),
$$
where $R(Y) = 0$ or $\dg(R(Y)< \dg(\be_1(Y))$; if $\beta_1 = 0$ and $\beta_2 \ne 0$, then it is determined
by the relations
$$
W_1Y = YW_1,\quad W_2Y = YW_2 + \be_2(Y)\quad\text{and}\quad W_1W_2-W_2W_1= R(Y),
$$
where $R(Y) = 0$ or $\dg(R(Y)< \dg(\be_2(Y))$; and if $\beta_1=\beta_2=0\ne 0$, then it is determined
by the relations
$$
W_1Y = YW_1,\quad W_2Y = YW_2\quad\text{and}\quad W_1W_2-W_2W_1= R(Y),
$$
where $R(Y)$ is an arbitrary polynomial.

\smallskip

\noindent 3) $Q\ne \ide$ has finite order $m>0$:\enspace Then $Q$ is a diagonal matrix whose diagonal
entries $q_1$ and $q_2$ are roots of unity of order $m_1$ and $m_2$ and the lowest common multiple
of $m_1$ and $m_2$ is $m$. We can reduce to the following two possibilities:

\begin{description}

\item[1a.] $q_1,q_2\ne 1$. In this case $\be_1(Y),\be_2(Y)\in Yk[Y^m]$.

\smallskip

\item[1b.] $q_1$ a root of unity of order $m$ and $q_2 = 1$. In this case $\be_1(Y)\in
Yk[Y^m]$ and $\be_2(Y) = 0$ or there exists $b^{(1)}\in k$ such that $\be_1(Y) = b^{(1)}Y$
and $\be_2(Y)\in Yk[Y^m]\setminus \{0\}$.

\end{description}
We consider first the case~1a. An easy computation shows that $\Xi(D_1) =0$ (hence
$d^2=0$) and
$$
\Xi(D_2) = \begin{cases} 0 & \text{if $q_1q_2 \ne 1$,}\\k[Y^m] & \text{if $q_1q_2 = 1$.}
\end{cases}
$$
Hence; if $q_1q_2\ne 1$, then the multiplicative structure of $k[Y]\# k[X_1,X_2]$ is determined by
the relations
$$
W_1Y = q_1YW_1 + \be_1(Y),\quad W_2Y = q_2YW_2 + \be_2(Y)\quad\text{and}\quad
W_1W_2-W_2W_1=0,
$$
with $\beta_1(Y)$ and $\beta_2(Y)$ belong to $Yk[Y^m]$; and if $q_1q_2=1$, then it is determined
by the relations
$$
W_1Y = q_1YW_1 + \be_1(Y),\quad W_2Y = q_2YW_2 + \be_2(Y)\quad\text{and}\quad
W_1W_2-W_2W_1=P(Y^m),
$$
with $\beta_1(Y), \beta_2(Y)\in Yk[Y^m]$ and $P\in k[Y]$.

We consider now the case~1b. An easy computation shows that $\Xi(D_2) = 0$. Hence, the
multiplicative structure of $k[Y]\# k[X_1,X_2]$ is determined by the relations
$$
W_1Y = q_1YW_1 + \beta_1(Y),\quad W_2Y = YW_2 + \beta_2(Y)\quad\text{and}\quad
W_1W_2-W_2W_1=0,
$$
with $\be_1(Y)\in Yk[Y^m]$ and $\be_2(Y) = 0$ or $\be_1(Y) = b^{(1)}Y$ and $\be_2(Y)\in
Yk[Y^m]\setminus\{0\}$.


\begin{thebibliography}{B-K-L-T}

\bibitem[A-F-G]{A-F-G} J. N. Alonso \'Alvarez, J. M. Fern\'andez Vilaboa and R. Gonz\'alez Rodríguez
\newblock {\em The group of Galois extensions with central invariants},
\newblock {Communications in Algebra, vol~29}
\newblock {(2001) 343--373}.

\bibitem[A-S]{A-S} N. Andruskiewitsch and H. J. Schneider
\newblock {\em Hopf algebras of order $p^2$ and braided Hopf algebras of order $p$},
\newblock {Journal of Algebra, vol~199}
\newblock {(1998) 430--454}.

\bibitem[B-K-L-T]{B-K-L-T} Y. Bespalov, T. Kerler, V. Lyubashenko and V. Turaev
\newblock {\em Integrals for braided Hopf algebras},
\newblock {preprint}.

\bibitem[B-C-M]{B-C-M} R. J. Blattner, M. Cohen and S. Montgomery,
\newblock {\em Crossed products and inner actions of Hopf algebras},
\newblock {Trans. Amer. Math. Soc., Vol 298}
\newblock {(1986) 671--711}.

\bibitem[B-M]{B-M} T. Brzezi\'nski and S. Majid,
\newblock {\em Coalgebra bundles},
\newblock {Commun. Math. Phys., Vol 191}
\newblock {(1998) 467--492}.

\bibitem[D1]{D1} Y. Doi
\newblock {\em Equivalent crossed products for a Hopf algebra},
\newblock {Communications in Algebra, vol~17}
\newblock {(1989) 3053--3085}.

\bibitem[D2]{D2} Y. Doi
\newblock {\em Hopf modules in Yetter Drinfeld categories},
\newblock {Communications in Algebra, vol~26}
\newblock {(1998) 3057--3070}.

\bibitem[D-T]{D-T} Y. Doi and M. Takeuchi
\newblock {\em Cleft comodule algebras by a bialgebra},
\newblock {Communications in Algebra, vol~14}
\newblock {(1986) 801--317}.

\bibitem[F-M-S]{F-M-S} D. Fishman, S. Montgomery and H. J. Schneider
\newblock {\em Frobenius extensions of subalgebras of Hopf algebras},
\newblock {Transactions of the American Mathematical Society, vol~349}
\newblock {(1997) 4857--4895}.

\bibitem[G-G1]{G-G1} J. A. Guccione and J. J. Guccione,
\newblock {\em Theory of braided Hopf crossed products},
\newblock {Journal of Algebra, Vol 261}
\newblock {(2003) 54--101}.

\bibitem[G-G2]{G-G2} J. A. Guccione and J. J. Guccione,
\newblock {\em Hochschild (Co)homology of Differential Operators Rings},
\newblock {Journal of Algebra, Vol 243}
\newblock {(2001) 596--614}.

\bibitem[G-G3]{G-G3} J. A. Guccione and J. J. Guccione,
\newblock {\em A generalization of crossed products},
\newblock {Contemporary Mathematics, Vol 267}
\newblock {(2000) 135--160}.

\bibitem[L1]{Ly1} V. Lyubashenko
\newblock {\em Modular tranformations for tensor categories},
\newblock {Journal of Pure and Applied Algebra, vol~98}
\newblock {(1995) 279--327}.

\bibitem[Mo]{Mo} S. Montgomery,
\newblock {\em Hopf algebras and their actions on rings}
\newblock {CBMS Regional Conference Series in Mathematics}
\newblock {AMS Providence Rhode Island, Vol 82}
\newblock {(1993)}

\bibitem[So]{So} Y. Sommerh\"auser
\newblock {\em Integrals for braided Hopf algebras},
\newblock {preprint}.

\bibitem[Sw]{Sw} M. Sweedler,
\newblock {\em Cohomology of algebras over Hopf algebras},
\newblock {transactions of the American Mathematical Society}
\newblock {(1968) 205--239}.

\bibitem[T1]{T1} M. Takeuchi
\newblock {\em Survey of braided Hopf algebras},
\newblock {Contemporary Mathematics, vol~267}
\newblock {(2000) 301--323}.

\bibitem[T2]{T2} M. Takeuchi
\newblock {\em Finite Hopf algebras in braided tensor categories},
\newblock {Journal of Pure and Applied Algebra, vol~138}
\newblock {(1999) 59--82}.


\end{thebibliography}
\end{document}